\journal{Journal of Pure and Applied Algebra}
\newtheorem{defn}{Definition}[section]
\newtheorem{prop}[defn]{Proposition}
\newtheorem{theo}[defn]{Theorem}
\newtheorem{lem}[defn]{Lemma}
\newtheorem{cor}[defn]{Corollary}
\newtheorem{rmk}[defn]{Remark}
\newtheorem{claim}[defn]{Claim}
\newtheorem*{claim*}{Claim}
\newtheorem*{just*}{Justification}
\newtheorem*{assumption*}{Assumption}
\newtheorem*{lem*}{Lemma}
\newtheorem*{prop*}{Proposition}
\newtheorem*{thm*}{Theorem}
\newtheorem{sub-claim}[defn]{Sub-Claim}
\newtheorem*{theo*}{Theorem}
\newcommand{\T}{\mathbb{T}}
\newcommand{\la}{\left\langle}
\newcommand{\ra}{\right\rangle}
\newcommand{\C}{\mathbb{C}}
\newcommand{\Z}{\mathcal{Z}}
\newcommand{\B}{\mathcal{B}}
\newcommand{\PTmod}{\T\mathsf{mod}}
\newcommand{\J}{\mathcal{J}}
\newcommand{\SigmaJ}{\Sigma^\J}
\newcommand{\TJ}{\T^\J}
\newcommand{\PTJmod}{\TJ\mathsf{mod}}
\newcommand{\Sets}{\mathsf{Set}}
\newcommand{\Id}{\mathsf{Id}}
\newcommand{\x}{\mathsf{x}}
\newcommand{\Term}{\mathsf{Term}}
\newcommand{\Sort}{\mathsf{Sort}}
\newcommand{\id}{\mathsf{id}}
\newcommand{\Fun}{\mathsf{Fun}}
\newcommand{\Aut}{\mathsf{Aut}}
\newcommand{\Group}{\mathsf{Group}}
\newcommand{\Tmod}{\T\mathsf{mod}}
\newcommand{\TJmod}{\TJ\mathsf{mod}}
\newcommand{\Horn}{\mathsf{Horn}}
\newcommand{\Dom}{\mathsf{Dom}}
\newcommand{\ob}{\mathsf{ob}}
\newcommand{\mor}{\mathsf{mor}}
\newcommand{\inn}{\mathsf{inn}}
\newcommand{\cod}{\mathsf{cod}}
\newcommand{\limit}{\mathsf{lim}}
\begin{document}

\begin{frontmatter}

\title{Inner automorphisms of presheaves of groups}

%% Group authors per affiliation:
\author[mymainaddress]{Jason Parker}

%% or include affiliations in footnotes:

\ead{parkerj@brandonu.ca}

\address[mymainaddress]{Department of Mathematics and Computer Science, Brandon University, 270-18th Street, R7A 6A9, Brandon, Manitoba, Canada}

\begin{abstract}
It has been proven by Schupp and Bergman that the inner automorphisms of groups can be characterized purely \emph{categorically} as those group automorphisms that can be coherently extended along any outgoing homomorphism. One is thus motivated to define a notion of \emph{(categorical) inner automorphism} in an arbitrary category, as an automorphism that can be coherently extended along any outgoing morphism, and the theory of such automorphisms forms part of the theory of \emph{covariant isotropy}. In this paper, we prove that the categorical inner automorphisms in any category $\Group^\J$ of presheaves of groups can be characterized in terms of conjugation-theoretic inner automorphisms of the component groups, together with a natural automorphism of the identity functor on the index category $\J$. In fact, we deduce such a characterization from a much more general result characterizing the categorical inner automorphisms in any category $\Tmod^\J$ of presheaves of $\T$-models for a suitable first-order theory $\T$. 
\end{abstract}  

\begin{keyword}
inner automorphism \sep isotropy \sep quasi-equational theory \sep presheaf \sep conjugation 
\MSC[2020] 08A35 \sep 08C05 \sep 18C10 \sep 20A15 \sep 20J15 
\end{keyword}

\end{frontmatter}

\section{Introduction}

An automorphism $\alpha : G \xrightarrow{\sim} G$ of a group $G$ is said to be \emph{inner} if there is some element $s \in G$ with respect to which $\alpha$ is defined by conjugation, in the sense that $\alpha(g) = sgs^{-1}$ for any $g \in G$. In \cite[Theorem 1]{Bergman}, George Bergman proved that these inner automorphisms can be characterized purely \emph{categorically}, without reference to group elements or conjugation, as the automorphisms that can be \emph{coherently extended} along any morphisms out of their domains.\footnote{Earlier versions of this result were also proven by Pettet \cite{Pettet} and Schupp \cite{Schupp}.} More precisely, he showed that an automorphism $\alpha : G \xrightarrow{\sim} G$ is inner if and only if one can define a group automorphism $\alpha_f : H \xrightarrow{\sim} H$ of the codomain of any group homomorphism $f : G \to H$ out of $G$ in such a way that $\alpha_{\id_G} = \alpha$ and the resulting family of automorphisms $(\alpha_f)_f$ is \emph{coherent}. The latter condition means that if $f : G \to H$ and $f' : H \to K$ are any composable group homomorphisms out of $G$, then the following square must commute: 
\[\begin{tikzcd}[ampersand replacement=\&, row sep = huge, column sep = huge]
H \arrow{r}{\alpha_{f}}\arrow{d}[swap]{f'} \& H\arrow{d}{f'} \\
K\arrow{r}[swap]{\alpha_{f' \circ f}} \& K
\end{tikzcd}\] 
Such a family of automorphisms $(\alpha_f)_f$ is just a natural automorphism of the projection functor $G/\Group \to \Group$, which sends a morphism $f : G \to H$ to its codomain $H$. We refer to such a family of automorphisms as an \emph{extended inner automorphism} of $G$. Bergman's result can therefore be restated as saying that a group automorphism $\alpha : G \xrightarrow{\sim} G$ is inner iff there is an extended inner automorphism $(\alpha_f)_f \in \Z(G)$ with $\alpha_{\id_G} = \alpha$. These extended inner automorphisms form a group $\mathcal{Z}(G)$, which we call the \emph{covariant isotropy group} of $G$.   

We can now generalize these ideas from the category $\Group$ to an arbitrary category. For any category $\C$, one can define its \emph{covariant isotropy group (functor)} $\Z_\C : \C \to \Group$, which sends any object $C \in \ob\C$ to the group of natural automorphisms of the projection functor $C/\C \to \C$. More concretely, if we define $\mathsf{Dom}(C) := \left\{ f \in \mor(\C) : \mathsf{dom}(f) = C\right\}$, then an element $\pi \in \Z_\C(C)$ is a $\mathsf{Dom}(C)$-indexed family of automorphisms \[ \pi = \left(\pi_f : \mathsf{cod}(f) \xrightarrow{\sim} \mathsf{cod}(f)\right)_{f \in \mathsf{Dom}(C)} \] with the property that if $f : C \to C'$ and $f' : C' \to C''$ are any composable morphisms out of $C$, then $\pi_{f' \circ f} \circ f' = f' \circ \pi_f$ as in the commutative square above. Intuitively, an element $\pi \in \Z_\C(C)$ provides an automorphism $\pi_{\id_C} : C \xrightarrow{\sim} C$ that can be coherently or functorially \emph{extended} along any morphism out of $C$. The covariant isotropy group of a category $\C$ can therefore be regarded as encoding a notion of \emph{inner automorphism} or \emph{conjugation} for $\C$, abstracting from the initial case of $\C = \Group$. Motivated by the considerations above, we may then refer to the elements of $\Z_\C(C)$ as the \emph{extended inner automorphisms} of $C \in \ob(\C)$, while a \emph{(categorical) inner automorphism} of $C$ is an automorphism $f : C \xrightarrow{\sim} C$ for which there is some extended inner automorphism $\pi \in \Z_\C(C)$ with $\pi_{\id_C} = f$, i.e. a (categorical) inner automorphism is an automorphism that can be coherently extended along any outgoing morphism.\footnote{The general theory of categorical isotropy was introduced in \cite{Funk}.}    

In \cite{MFPS} and \cite{FSCD}, the present author and his collaborators studied the covariant isotropy group of the category $\Tmod$ of models of any finitary \emph{quasi-equational theory} $\T$ in the sense of \cite{Horn}.\footnote{Quasi-equational theories are an equivalent formulation of multi-sorted essentially algebraic theories.} We showed therein that the covariant isotropy group of a model $M$ of a finitary quasi-equational theory $\T$ can be \emph{logically} or \emph{syntactically} described in terms of the sort-indexed families in $\prod_C M\la \x_C \ra$ that are substitutionally invertible and commute generically with the operations of $\T$, where $M\la \x_C \ra$ is the $\T$-model obtained from $M$ by freely adjoining a new element $\x_C$ of sort $C$. Using this logical or syntactic characterization of the covariant isotropy group of $\Tmod$, we then provided explicit characterizations of the (extended) inner automorphisms in many prominent categories of mathematical interest, including the categories of (abelian) groups, (commutative) monoids, (commutative) rings with unit, lattices, racks and quandles (see also \cite{racks}), and strict monoidal categories.

In \cite[5.2]{FSCD} the authors also proved an explicit characterization of the covariant isotropy group of any presheaf category $\Sets^\J$ on a small category $\J$, and showed that it is the constant functor $\Sets^\J \to \Group$ with value $\Aut(\Id_\J)$, the group of natural automorphisms of the identity functor on $\J$. It is trivial to see that $\Sets$ is the category of models of a finitary quasi-equational theory, namely the single-sorted theory with no operations and no axioms. It is the primary purpose of the present paper to extend the characterization of covariant isotropy for presheaf categories $\Sets^\J$ to categories of the form $\Tmod^\J$ for arbitrary quasi-equational theories $\T$.\footnote{In fact, we will need to eventually impose some modest conditions on $\T$; see Definitions \ref{singleindeterminateisotropy} and \ref{singlesortednontotaloperations}.} In other words, we will explicitly characterize the (extended) inner automorphisms in such functor categories $\Tmod^\J$, and it will turn out that the (extended) inner automorphisms of a functor $F : \J \to \Tmod$ can be described in terms of the automorphism group $\Aut(\Id_\J)$ and the (extended) inner automorphisms of the component $\T$-models $F(i) \in \Tmod$ (for $i \in \ob\J$). From this general characterization, we will then extract an explicit characterization of the (extended) inner automorphisms in any category $\Group^\J$ of presheaves of groups. In particular, we will show that the (categorical) inner automorphisms of any functor $F : \J \to \Group$ can be characterized in terms of the automorphism group $\Aut(\Id_\J)$ and the conjugation-theoretic inner automorphisms of the component groups $F(i) \in \Group$.

We now provide a brief overview of the paper. In Section \ref{background} we first review some background material from \cite{Horn} on quasi-equational theories and their models, and we conclude by recalling the characterization of covariant isotropy for the categories of models of such theories that was proven in \cite{FSCD}. We then proceed in Section \ref{logical} to show that if $\T$ is a quasi-equational theory and $\J$ a small category, then the functor category $\Tmod^\J$ can be axiomatized as the category of models of a quasi-equational theory $\TJ$, and we then prove an explicit \emph{logical} characterization of the covariant isotropy group of $\TJmod \cong \Tmod^\J$. In Section \ref{categorical} we deduce from these results a \emph{categorical} characterization of the covariant isotropy group of $\Tmod^\J$, and hence of the (extended) inner automorphisms of functors $\J \to \Tmod$. In the final Section \ref{applications}, we deduce some important special cases of our main results, and conclude by providing an explicit characterization of the (extended) inner automorphisms in any category $\Group^\J$ of presheaves of groups. 

The content in this paper (especially from Section \ref{logical} onward) is largely based on the final chapter of the author's recent PhD thesis \cite{thesis}; in the interest of brevity, we have therefore chosen to refer the reader to this source for many of the (technical) proofs.   

\section{Covariant isotropy of locally finitely presentable categories}
\label{background}

In this section, we review the techniques developed in \cite{FSCD, thesis} for computing the covariant isotropy group of the category of models of any finitary quasi-equational theory, equivalently the covariant isotropy group of any locally finitely presentable category. The initial material in this section borrows heavily from \cite{Horn}. 

\begin{defn}
\label{signature}
{\em 
A \emph{(first-order) signature} $\Sigma$ is a pair of sets $\Sigma = (\Sigma_{\mathsf{Sort}}, \Sigma_{\mathsf{Fun}})$ such that $\Sigma_{\mathsf{Sort}}$ is the set of \emph{sorts} and $\Sigma_{\mathsf{Fun}}$ is the set of \emph{function/operation symbols}. Each element $f \in \Sigma_{\mathsf{Fun}}$ comes equipped with a pair $\left((A_1, \ldots, A_n), A\right)$, where $n \geq 0$ is a natural number and $A, A_i$ are sorts for all $1 \leq i \leq n$, which we write as $f : A_1 \times \ldots \times A_n \to A$. In case $n = 0$, we write $f : A$. \qed
}
\end{defn} 

\begin{defn}
\label{terms}
{\em Let $\Sigma$ be a signature. For every sort $A \in \Sigma_{\mathsf{Sort}}$, we assume that we have a countable set $V_A$ of variables of sort $A$. We now define the set $\mathsf{Term}(\Sigma)$ of \emph{terms} of $\Sigma$ recursively as follows, while simultaneously defining the \emph{sort} and the set $\mathsf{FV}(t)$ of \emph{free variables} of a term $t \in \mathsf{Term}(\Sigma)$:
\begin{itemize}
\item If $A \in \Sigma_{\mathsf{Sort}}$ and $x \in V_A$, then $x \in \mathsf{Term}(\Sigma)$ is of sort $A$ with $\mathsf{FV}(x) := \{x\}$.

\item If $f : A_1 \times \ldots \times A_n \to A$ is a function symbol of $\Sigma$ and $t_i \in \mathsf{Term}(\Sigma)$ with $t_i : A_i$ for each $1 \leq i \leq n$, then $f(t_1, \ldots, t_n) \in \mathsf{Term}(\Sigma)$ is of sort $A$, and $\mathsf{FV}\left(f(t_1, \ldots, t_n)\right) := \bigcup_{1 \leq i \leq n} \mathsf{FV}(t_i)$. In particular, if $c$ is a constant symbol of sort $A$, then $c$ is a term of sort $A$ with $\mathsf{FV}(c) = \emptyset$.
\end{itemize}

\noindent If $t \in \mathsf{Term}(\Sigma)$ and $\mathsf{FV}(t) = \emptyset$, then we call $t$ a \emph{closed} term. If $t \in \mathsf{Term}(\Sigma)$, then we write $t(\vec{x})$ to mean that $\mathsf{FV}(t) \subseteq \vec{x}$. 
\qed
}
\end{defn}

\begin{defn}
\label{Hornformulas}
{\em Let $\Sigma$ be a signature. We define the class $\mathsf{Horn}(\Sigma)$ of \emph{Horn formulas} over $\Sigma$ recursively as follows, while simultaneously defining the set $\mathsf{FV}(\varphi)$ of \emph{free variables} of a formula $\varphi \in \mathsf{Horn}(\Sigma)$:
\begin{itemize}
\item If $t_1, t_2 \in \mathsf{Term}(\Sigma)$ are terms of the same sort, then $t_1 = t_2 \in \mathsf{Horn}(\Sigma)$, and $\mathsf{FV}(t_1 = t_2) := \mathsf{FV}(t_1) \cup \mathsf{FV}(t_2)$.

\item $\top \in \mathsf{Horn}(\Sigma)$ (the empty conjunction), and $\mathsf{FV}(\top) := \emptyset$.

\item If $\varphi_1, \varphi_2 \in \mathsf{Horn}(\Sigma)$, then $\varphi_1 \wedge \varphi_2 \in \mathsf{Horn}(\Sigma)$, and $\mathsf{FV}\left(\varphi_1 \wedge \varphi_2\right) := \mathsf{FV}(\varphi_1) \cup \mathsf{FV}(\varphi_2)$.
\end{itemize}
If $\varphi \in \mathsf{Horn}(\Sigma)$ and $\mathsf{FV}(\varphi) = \emptyset$, then we will refer to $\varphi$ as a (Horn) \emph{sentence}. If $\varphi \in \mathsf{Horn}(\Sigma)$, then we will write $\varphi(\vec{x})$ to mean that $\mathsf{FV}(\varphi) \subseteq \vec{x}$. \qed
}
\end{defn}

\begin{defn}
\label{sequent}
{\em Let $\Sigma$ be a signature. A \emph{Horn sequent} over $\Sigma$ is an expression of the form $\varphi \vdash^{\vec{x}} \psi$, where $\varphi, \psi \in \mathsf{Horn}(\Sigma)$ and $\mathsf{FV}(\varphi), \mathsf{FV}(\psi) \subseteq \vec{x}$ with $\vec{x}$ finite. A \emph{quasi-equational theory} $\T$ is a set of Horn sequents over a signature $\Sigma$. \qed
}
\end{defn}

One can now set up a deduction system of \emph{partial Horn logic} for quasi-equational theories, wherein certain Horn sequents are designated as logical axioms, and there are logical inference rules allowing one to deduce certain Horn sequents from other Horn sequents. We refer the reader to \cite{Horn} for a list of all the specific logical axioms and inference rules of partial Horn logic. The main distinguishing feature of this deduction system is that equality of terms is \emph{not} assumed to be reflexive, i.e. if $t(\vec{x})$ is a term over a given signature, then $\top \vdash^{\vec{x}} t(\vec{x}) = t(\vec{x})$ is \emph{not} a logical axiom of partial Horn logic, unless $t$ is a variable. In other words, if we abbreviate the equation $t = t$ by $t \downarrow$ (read: $t$ \emph{is defined}), then unless $t$ is a variable, the sequent $\top \vdash^{\vec{x}} t \downarrow$ is \emph{not} a logical axiom of partial Horn logic. 

If $\T$ is a quasi-equational theory over a signature $\Sigma$ and $\varphi \vdash^{\vec{x}} \psi$ is a Horn sequent over $\Sigma$, then we say that the sequent $\varphi \vdash^{\vec{x}} \psi$ is \emph{provable} in $\T$ if there is a finite sequence of Horn sequents whose last member is $\varphi \vdash^{\vec{x}} \psi$, and each member of the sequence is either a logical axiom of partial Horn logic, an axiom of $\T$, or is obtained from previous elements of the sequence by an inference rule of partial Horn logic. We also say that $\T$ \emph{proves} the sequent $\varphi \vdash^{\vec{x}} \psi$, or that this sequent is a \emph{theorem} of $\T$. If $\T$ proves a Horn sequent of the form $\top \vdash^{\vec{x}} \varphi$, then we usually write this as $\T \vdash^{\vec{x}} \varphi$.    

We now review the set-theoretic semantics of partial Horn logic. We recall that if $A$ and $B$ are any sets, then a \emph{partial function} $f : A \rightharpoondown B$ is a total function $f : \mathsf{dom}(f) \to B$ with $\mathsf{dom}(f) \subseteq A$.  

\begin{defn}
{\em Let $\Sigma$ be a signature. A (set-based) \emph{partial} $\Sigma$\emph{-structure} $M$ is given by a set $M_A$ for each $A \in \Sigma_\Sort$ and a \emph{partial function} $f^M : M_{A_1} \times \ldots \times M_{A_n} \rightharpoondown M_{A}$ for each function symbol $f : A_1 \times \ldots \times A_n \to A$ of $\Sigma_\Fun$. \qed
}
\end{defn}

\begin{defn}
{\em Let $\Sigma$ be a signature, and let $M$ and $N$ be partial $\Sigma$-structures. A $\Sigma$\emph{-morphism} $h : M \to N$ is a $\Sigma_{\mathsf{Sort}}$-indexed sequence of \emph{total} functions $h = (h_A : M_A \to N_A)_{A \in \Sigma_\Sort}$ such that for any function symbol $f : A_1 \times \ldots \times A_n \to A$ in $\Sigma_\Fun$ and any $(m_i)_i \in \prod_{1 \leq i \leq n} M_{A_i}$, if $(m_i)_i \in \mathsf{dom}\left(f^M\right)$, then $\left(h_{A_i}(m_i)\right)_i \in \mathsf{dom}\left(f^N\right)$ and $h_A\left(f^M(m_1, \ldots, m_n)\right) = f^N\left(h_{A_1}(m_1), \ldots, h_{A_n}(m_n)\right) \in N_A$. \qed
}
\end{defn}

It is easy to verify that the (componentwise) composition of $\Sigma$-morphisms is a $\Sigma$-morphism, and that the sequence of identity functions $(\mathsf{id}_A : M_A \to M_A)_A$ is a $\Sigma$-morphism $\mathsf{id}_M : M \to M$ that is an identity for composition. So we can form the category $\mathsf{P}\Sigma\mathsf{Str}$ of partial $\Sigma$-structures and $\Sigma$-morphisms. 

Before we can define the notion of a (set-based) model of a quasi-equational theory, we must first define the interpretations of terms and Horn formulas in partial structures.

\begin{defn}
{\em Let $\Sigma$ be a signature. Let $t(x_1, \ldots, x_k) : A$ be an element of $\mathsf{Term}(\Sigma)$ with free variables among $x_i : A_i$ for $1 \leq i \leq k$. Let $M$ be a partial $\Sigma$-structure. We define the \emph{partial} function \linebreak $t(x_1, \ldots, x_k)^M : \prod_{1 \leq i \leq k} M_{A_i} \rightharpoondown M_A$ by induction on the structure of $t$: 
\begin{itemize}
\item If $t \equiv x_i : A_i$ for some $1 \leq i \leq k$, then we set $t^M := \pi_i : \prod_{1 \leq i \leq k} M_{A_i} \to M_{A_i}$, the (total) projection onto the $i^{\mathsf{th}}$ factor.

\item If $t \equiv f(t_1, \ldots, t_m) : B$ for some function symbol $f : B_1 \times \ldots \times B_m \to B$ of $\Sigma$ with $t_j(x_1, \ldots, x_k) \in \mathsf{Term}(\Sigma)$ and $t_j : B_j$ for each $1 \leq j \leq m$, we first set
\begin{align*}
\mathsf{dom}\left(t^M\right)	&:= \left\{ \vec{a} \in \bigcap_{1 \leq j \leq m} \mathsf{dom}\left(t_j^M\right) : \left(t_j^M(\vec{a})\right)_{j} \in \mathsf{dom}\left(f^M\right)\right\},
\end{align*} 
and for any $\vec{a} \in \mathsf{dom}\left(t^M\right)$ we set $t^M(\vec{a}) := f^M\left(t_1^M(\vec{a}), \ldots, t_m^M(\vec{a})\right) \in M_B$, which defines $t^M = f(t_1, \ldots, t_m)^M : \prod_{1 \leq i \leq k} M_{A_i} \rightharpoondown M_B$. \qed 
\end{itemize}
}
\end{defn}

\begin{defn}
{\em Let $\Sigma$ be a signature, and let $\varphi(x_1, \ldots, x_k)$ be a Horn formula over $\Sigma$ with free variables among $x_i : A_i$ for $1 \leq i \leq k$. Let $M$ be a partial $\Sigma$-structure. We define $\varphi(x_1, \ldots, x_k)^M \subseteq \prod_{1 \leq i \leq k} M_{A_i}$ by induction on the structure of $\varphi$: 
\begin{itemize}
\item If $\varphi \equiv t_1 = t_2$ for some terms $t_1, t_2$ of the same sort, then \[ \varphi^M = (t_1 = t_2)^M := \left\{ \vec{a} \in \mathsf{dom}\left(t_1^M\right) \cap \mathsf{dom}\left(t_2^M\right) : t_1^M(\vec{a}) = t_2^M(\vec{a})\right\}. \]

\item If $\varphi \equiv \top$, then $\top^M := \prod_{1 \leq i \leq k} M_{A_i}$.

\item If $\varphi \equiv \varphi_1 \wedge \varphi_2$ for some $\varphi_1, \varphi_2 \in \Horn(\Sigma)$, then $\left(\varphi_1 \wedge \varphi_2\right)^M := \varphi_1^M \cap \varphi_2^M \subseteq \prod_{1 \leq i \leq k} M_{A_i}$. \qed
\end{itemize}
}
\end{defn}

\begin{defn}
{\em Let $\Sigma$ be a signature, let $M$ be a partial $\Sigma$-structure, and let $\varphi(\vec{x}), \psi(\vec{x})$ be Horn formulas over $\Sigma$. Then $M$ \emph{models} or \emph{satisfies} the Horn sequent $\varphi \vdash^{\vec{x}} \psi$ if $\varphi(\vec{x})^M \subseteq \psi(\vec{x})^M$.

Let $\T$ be a quasi-equational theory over a signature $\Sigma$, and let $M$ be a partial $\Sigma$-structure. Then $M$ is a \emph{model} of $\T$ if $M$ satisfies every axiom of $\T$. \qed }
\end{defn}

\noindent For a quasi-equational theory $\T$ over a signature $\Sigma$, we now let $\Tmod$ be the full subcategory of $\mathsf{P}\Sigma\mathsf{Str}$ on the models of $\T$. 

In order to sketch the details of the Initial Model Theorem for quasi-equational theories (see \cite[Theorem 22]{Horn}), we first require the following definitions.

\begin{defn}
{\em Let $\Sigma$ be a signature and $M$ a partial $\Sigma$-structure. For every sort $A$, let $\sim_A$ be an equivalence relation on $M_A$. Then the $\Sigma_{\mathsf{Sort}}$-indexed family of equivalence relations $(\sim_A)_A$ is a \emph{partial congruence} on $M$ if for every function symbol $f : A_1 \times \ldots \times A_n \to A$ in $\Sigma_\Fun$ and all $\vec{a}, \vec{b} \in \prod_{1 \leq i \leq n} M_{A_i}$, if 
$a_i \sim_{A_i} b_i$ for all $1 \leq i \leq n$, then $\vec{a} \in \mathsf{dom}\left(f^M\right)$ iff $\vec{b} \in \mathsf{dom}\left(f^M\right)$ and $\vec{a}, \vec{b} \in \mathsf{dom}\left(f^M\right) \Longrightarrow f^M(\vec{a}) \sim_A f^M\left(\vec{b}\right)$. \qed
}
\end{defn}

\noindent We now have the following definition:  

\begin{defn}
\label{partialcongruencestructure}
{\em Let $\Sigma$ be a signature and $M$ a partial $\Sigma$-structure. Let $\sim \ = (\sim_A)_A$ be a partial congruence on $M$. We define the \emph{partial quotient} $\Sigma$\emph{-structure} $M/{\sim}$ as follows: 

\begin{itemize}
\item For every sort $A \in \Sigma$, we set $(M/{\sim})_A := M_A/{\sim_A}$, the set of equivalence classes of $M_A$ modulo the equivalence relation $\sim_A$.

\item For any function symbol $f : A_1 \times \ldots \times A_n \to A$ we set \[ \mathsf{dom}\left(f^{M/{\sim}}\right) := \left\{ \left([a_i]\right)_{i} \in \prod_{1 \leq i \leq n} M_{A_i}/{\sim_{A_i}} : (a_i)_{i} \in \mathsf{dom}\left(f^M\right) \right\}. \] Then for any $\left([a_i]\right)_{i} \in \mathsf{dom}\left(f^{M/{\sim}}\right)$, we set $f^{M/{\sim}}\left([a_1], \ldots, [a_n]\right) := \left[f^M(a_1, \ldots, a_n)\right]$.
\end{itemize}
Because $\sim$ is a partial congruence on $M$, it easily follows that $M/{\sim}$ is a well-defined partial $\Sigma$-structure. \qed }
\end{defn}

\noindent We now sketch the details of the Initial Model Theorem from \cite{Horn} that we will need for our purposes. First, given a quasi-equational theory $\T$ over a signature $\Sigma$, we define a specific partial $\Sigma$-structure $M^{\T}$.
  
\begin{defn}
{\em Let $\Sigma$ be a signature. First, let $\mathsf{Term}^c(\Sigma) := \{ t \in \mathsf{Term}(\Sigma) : \mathsf{FV}(t) = \emptyset \}$ be the set of \emph{closed terms} of $\mathsf{Term}(\Sigma)$. For any $A \in \Sigma_{\mathsf{Sort}}$, let
\[ \mathsf{Term}^c(\Sigma)_A := \{ t \in \mathsf{Term}^c(\Sigma) : t \ \text{is of sort} \ A \} \] be the set of closed $\Sigma$-terms of sort $A$. 

Now let $\T$ be a quasi-equational theory over $\Sigma$. We define a partial $\Sigma$-structure $M^{\T}$ as follows:
\begin{itemize}
\item For any sort $A \in \Sigma$, we set $M^{\T}_A := \{ t \in \mathsf{Term}^c(\Sigma)_A : \T \vdash t \downarrow \}$. 

\item For any function symbol $f : A_1 \times \ldots \times A_n \to A$ of $\Sigma$, we set \[ \mathsf{dom}\left(f^{M^{\T}}\right) := \left\{\vec{t} \in \prod_{1 \leq i \leq n} M^{\T}_{A_i} : \T \vdash f\left(\vec{t}\right)\downarrow\right\}, \] and if $\vec{t} \in \mathsf{dom}\left(f^{M^{\T}}\right)$, we set $f^{M^{\T}}\left(\vec{t}\right) := f\left(\vec{t}\right) \in M^\T_A$. \qed
\end{itemize}
}
\end{defn}

\noindent Now we define a partial congruence $\sim^\T$ on $M^{\T}$. For any sort $A \in \Sigma$, we set \[ \sim^\T_A  \: := \left\{ (t_1, t_2) \in M^{\T}_A \times M^{\T}_A : \T \vdash t_1 = t_2\right\}. \]

\noindent Using the rules of partial Horn logic, it is then straightforward to verify that $\sim^\T$ is in fact a partial congruence on $M^{\T}$. We now make the following definition:

\begin{defn}
{\em Let $\T$ be a quasi-equational theory over a signature $\Sigma$, and let $M^{\T}$ be the partial $\Sigma$-structure and $\sim^\T$ the partial congruence on $M^{\T}$ just defined. Applying Definition \ref{partialcongruencestructure}, we then define the following partial $\Sigma$-structure: $\mathsf{Free}(\T) := M^{\T}/{\sim^\T}$. \qed }
\end{defn}

\noindent The following theorem is then proven in \cite[Theorem 22]{Horn}:  

\begin{theo}
\label{initialmodelthm}
Let $\T$ be a quasi-equational theory over a signature $\Sigma$. Then the partial $\Sigma$-structure $\mathsf{Free}(\T)$ is an \textbf{initial model} of $\T$, i.e. an initial object of the category $\Tmod$. \qed
\end{theo}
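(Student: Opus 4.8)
# Proof Proposal for the Initial Model Theorem

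The plan is to verify directly that $\mathsf{Free}(\T) = M^\T/{\sim^\T}$ is a model of $\T$ and that it admits a unique $\Sigma$-morphism to any other model $N$ of $\T$. I would organize the argument around a single key lemma relating provability in $\T$ to semantic facts about closed terms.

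First I would establish the bridging lemma: for any closed term $t$ of sort $A$, we have $\T \vdash t\downarrow$ if and only if the equivalence class $[t]$ is a well-defined element of $\mathsf{Free}(\T)_A$ (which is immediate from the construction), and more substantially, for any Horn sentence $\varphi$, $\T \vdash \varphi$ if and only if $\mathsf{Free}(\T) \models \varphi$ (interpreting the finitely many closed terms appearing in $\varphi$). The forward direction here is a soundness statement, proved by induction on the length of the derivation in partial Horn logic, checking that each logical axiom and inference rule is respected by the interpretation in $\mathsf{Free}(\T)$ — this uses crucially that $\sim^\T$ is a partial congruence and that equality is interpreted as $\sim^\T$. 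Once this lemma is in hand, satisfaction of the axioms of $\T$ follows: given an axiom $\varphi \vdash^{\vec x} \psi$ and a tuple $[\vec t\,] \in \varphi^{\mathsf{Free}(\T)}$, the lemma gives $\T \vdash \varphi[\vec t/\vec x]$, hence $\T \vdash \psi[\vec t/\vec x]$ by substitution into the axiom, hence $[\vec t\,] \in \psi^{\mathsf{Free}(\T)}$ again by the lemma. So $\mathsf{Free}(\T)$ is a model.

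Next I would prove initiality. Given any model $N$ of $\T$, define $h_A : \mathsf{Free}(\T)_A \to N_A$ by sending $[t]$ to the interpretation $t^N$ (a genuine element of $N_A$ since $\T \vdash t\downarrow$ implies $N \models t\downarrow$ by soundness of $N$, as $N$ satisfies all axioms of $\T$ and hence all theorems). Well-definedness on $\sim^\T$-classes follows because $\T \vdash t_1 = t_2$ implies $N \models t_1 = t_2$, so $t_1^N = t_2^N$. That $h$ is a $\Sigma$-morphism is a direct computation: it commutes with operations wherever they are defined, by the inductive definition of term interpretation and the definition of the operations on $\mathsf{Free}(\T)$. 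For uniqueness, observe that any $\Sigma$-morphism $g : \mathsf{Free}(\T) \to N$ must send the class of a variable-free term built from function symbols to the corresponding element built in $N$, and this forces $g = h$ by induction on term structure — the only subtlety is that every element of $\mathsf{Free}(\T)_A$ is (the class of) a closed term, so there is no freedom.

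The main obstacle I expect is the soundness induction for partial Horn logic — specifically handling the inference rules that manipulate definedness assertions (e.g. the rules governing when $f(\vec t\,)\downarrow$ can be derived, and the substitution rule in the presence of partiality), since one must be careful that the interpretation in $\mathsf{Free}(\T)$ assigns the empty set (not the whole product) to a formula $t\downarrow$ precisely when $\T \not\vdash t\downarrow$. Verifying this requires tracking the exact form of the axioms and rules from \cite{Horn}, which is why the paper defers to that reference; here I would simply invoke \cite[Theorem 22]{Horn} directly, since reproving partial Horn completeness is outside the scope of the present paper.
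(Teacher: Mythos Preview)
Your proposal is correct, and in fact it does more than the paper does: the paper gives no proof at all here, simply citing \cite[Theorem~22]{Horn} and marking the statement with a \qed. Your sketch is essentially the standard argument one would find in that reference (soundness via induction on derivations to show $\mathsf{Free}(\T)$ is a model, then term interpretation in an arbitrary model $N$ to construct the unique morphism), and you yourself note at the end that one could simply invoke \cite[Theorem~22]{Horn} directly, which is exactly what the paper does.
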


\begin{rmk}
\label{explicitdescriptionoffreemodel}
{\em
For concreteness, we give the explicit description of $\mathsf{Free}(\T)$ for a quasi-equational theory $\T$ over a signature $\Sigma$. 
\begin{itemize}
\item For any sort $A \in \Sigma$, we have \[ \mathsf{Free}(\T)_A := M^{\T}_A/{\sim^\T_A} = \left\{ [t] : t \in \mathsf{Term}^c(\Sigma)_A \text{ and } \T \vdash t\downarrow \right\}, \]
where $[t]$ is the $\sim^\T_A$-congruence class of $t \in M^\T_A$ (so for any $s, t \in M^\T_A$, we have $[s] = [t]$ iff $\T \vdash s = t$).  

\item If $f : A_1 \times \ldots \times A_n \to A$ is a function symbol of $\Sigma$, then \[ \mathsf{dom}\left(f^{\mathsf{Free}(\T)}\right) = \left\{ \left([t_i]\right)_{i} \in \prod_{1 \leq i \leq n} \mathsf{Free}(\T)_{A_i} : \T \vdash f(t_1, \ldots, t_n)\downarrow \right\}, \]
and for any $\left([t_i]\right)_{i} \in \mathsf{dom}\left(f^{\mathsf{Free}(\T)}\right)$, we have $f^{\mathsf{Free}(\T)}\left([t_1], \ldots, [t_n]\right) = \left[f(t_1, \ldots, t_n)\right]$. \qed
\end{itemize}
}
\end{rmk}

To review the main results of \cite[Section 2.2]{thesis} and \cite{FSCD}, characterizing the covariant isotropy group of $\Tmod$ for a quasi-equational theory $\T$ in logical terms, we first recall the following notions. 

If $M \in \Tmod$ for a quasi-equational theory $\T$ over a signature $\Sigma$, then $\Sigma(M)$ is the \emph{diagram signature} of $M$, which extends $\Sigma$ by adding a new constant symbol $c_a : C$ for any sort $C \in \Sigma_\Sort$ and $a \in M_C$. The quasi-equational theory $\T(M)$ over the signature $\Sigma(M)$ then extends $\T$ by adding axioms expressing that each new constant $c_a$ is defined, and that the function symbols of $\T$ interact with these constants appropriately (for explicit details, see \cite[Definition 2.2.3]{thesis}). If $n \geq 1$ and $A_i \in \Sigma_\Sort$ for each $1 \leq i \leq n$, then $\Sigma(M, \x_1, \ldots, \x_n)$ is the signature that extends $\Sigma(M)$ by adding new pairwise distinct constant symbols $\x_i : A_i$ for all $1 \leq i \leq n$. The quasi-equational theory $\T(M, \x_1, \ldots, \x_n)$ over the signature $\Sigma(M, \x_1, \ldots, \x_n)$ then extends $\T(M)$ by adding axioms expressing that $\x_i$ is defined for each $1 \leq i \leq n$. Finally, if $C \in \Sigma_\Sort$, then $M\la \x_C \ra$ is defined to be the ($\Sigma$-reduct of) the initial model of $\T(M, \x_C)$, which therefore has the following explicit description (cf. Remark \ref{explicitdescriptionoffreemodel}): 

\begin{itemize}
\item For any $B \in \Sigma_\Sort$, \[ M\la \mathsf{x}_C \ra_B = \left\{ [t] : t \in \mathsf{Term}^c(\Sigma(M, \mathsf{x}_C))_B \wedge \T(M, \mathsf{x}_C) \vdash t\downarrow \right\}. \] 

\item If $f : A_1 \times \ldots \times A_n \to A$ is a function symbol of $\Sigma$, then \[ \mathsf{dom}\left(f^{M\la \x_C\ra}\right) = \left\{ \left([t_i]\right)_{i} \in \prod_{1 \leq i \leq n} M\la \x_C \ra_{A_i} : \T(M, \x_C) \vdash f(t_1, \ldots, t_n)\downarrow \right\}, \]
and for any $\left([t_i]\right)_{i} \in \mathsf{dom}\left(f^{M\la \x_C\ra}\right)$ we have $f^{M\la \x_C \ra}\left([t_1], \ldots, [t_n]\right) = \left[f(t_1, \ldots, t_n)\right]$.
\end{itemize}
   
\noindent We now recall \cite[Definition 2.2.47]{thesis} and \cite[Definition 6]{FSCD}. The notion of syntactic substitution used in the following definition is the standard/expected one; see \cite[Remark 2.2.21]{thesis} for an explicit definition.

\begin{defn}
\label{commutesgenericallydefn}
{\em Let $\T$ be a quasi-equational theory over a signature $\Sigma$, and let $M \in \Tmod$ and $([s_C])_C \in \prod_{C \in \Sigma_\Sort} M\la \mathsf{x}_C \ra_C$. 

\begin{itemize}

\item If $f : A_1 \times \ldots \times A_n \to A$ is a function symbol of $\Sigma$, then $([s_C])_C$ \emph{commutes generically with} $f$ if the Horn sequent \[ f(\x_1, \ldots, \x_n) \downarrow \: \vdash \ s_A[f(\x_1, \ldots, \x_n)/\mathsf{x}_A] = f\left(s_{A_1}[\mathsf{x}_1/\mathsf{x}_{A_1}], \ldots, s_{A_n}[\mathsf{x}_n/\mathsf{x}_{A_n}]\right) \] is provable in $\T\left(M, \x_1, \ldots, \x_n\right)$. 

\item We say that $([s_C])_C$ is \emph{(substitutionally) invertible} if for every $B \in \Sigma_\Sort$ there is some $\left[s_B^{-1}\right] \in M\la \mathsf{x}_B \ra_B$ with \[ \left[s_B\left[s_B^{-1}/\mathsf{x}_B\right]\right] = [\mathsf{x}_B] = \left[s_B^{-1}\left[s_B/\mathsf{x}_B\right]\right] \in M\la \mathsf{x}_B \ra_B, \] i.e. with $\T(M, \mathsf{x}_B) \vdash s_B\left[s_B^{-1}/\mathsf{x}_B\right] = \mathsf{x}_B = s_B^{-1}[s_B/\mathsf{x}_B]$.

\item We say that $([s_C])_C$ \emph{reflects definedness} if for every function symbol $f : A_1 \times \ldots \times A_n \to A$ in $\Sigma$, the sequent  
\[ f\left(s_{A_1}[\mathsf{x}_1/\mathsf{x}_{A_1}], \ldots, s_{A_n}[\mathsf{x}_n/\mathsf{x}_{A_n}]\right) \downarrow \ \ \vdash f(\x_1, \ldots \x_n) \downarrow \] is provable in $\T\left(M, \x_1, \ldots, \x_n\right)$. \qed
\end{itemize}
}
\end{defn}

\noindent As in \cite[Definition 2.2.36]{thesis}, we then have a functor $G_\T : \Tmod \to \Group$, with $G_\T(M)$ for $M \in \Tmod$ being the group of all elements $([s_C])_C \in \prod_{C \in \Sigma} M\la \mathsf{x}_C \ra_C$ that are substitutionally invertible and commute generically with and reflect definedness of every function symbol of $\Sigma$. The unit of this group is the element $([\x_C])_C$, the inverse of any element is obtained via the substitutional invertibility of each of its components (as in Definition \ref{commutesgenericallydefn}), and if $([s_C])_C, ([t_C])_C \in G_\T(M)$, then their product is obtained via substitution as $([s_C])_C \cdot ([t_C])_C := \left(\left[s_C\left[t_C/\x_C\right]\right]\right)_C$. For more details, see \cite[Propositions 2.2.35, 2.2.38]{thesis}. From \cite[Theorems 2.2.41, 2.2.53]{thesis} and \cite[Theorem 7]{FSCD} we then conclude that if $\T$ is a quasi-equational theory, then \[ \Z_{\Tmod} \cong G_\T : \Tmod \to \Group. \] (In what follows, we will generally write $\Z_\T$ in place of $\Z_{\Tmod}$.) In other words, the covariant isotropy group of $M \in \Tmod$, i.e. its group of extended inner automorphisms, is isomorphic to the group of all elements of $([s_C])_C \in \prod_{C \in \Sigma} M\la \mathsf{x}_C \ra_C$ that are substitutionally invertible and commute generically with and reflect definedness of all operations of $\T$ (naturally in $M$). 

\section{Logical characterization}
\label{logical}

For the remainder of this section, we fix a quasi-equational theory $\T$ over a signature $\Sigma$, as well as a small index category $\J$. At a certain point (see Proposition \ref{betaissurjective}) we will need to impose two modest assumptions on $\T$, but for the time being we can assume that $\T$ is arbitrary. We first show that the functor category $\Tmod^\J$ can be axiomatized as the category of models of a quasi-equational theory $\TJ$, and then we explicitly characterize $G_{\TJ} : \TJmod \to \Group$, which will yield an explicit characterization of the covariant isotropy group $\Z_{\Tmod^\J} : \Tmod^\J \to \Group$ in Section \ref{categorical}. 

We first define the signature $\Sigma^\J$ for the desired quasi-equational theory $\TJ$. Since $\J$ is small, we know that its classes $\ob\J$ and $\mor\J$ of objects and morphisms are both sets. 

\begin{defn}
\label{functorsignature}
{\em We define the signature $\Sigma^\J$ as follows: 
\begin{itemize}
\item If $i \in \ob\J$ and $A \in \Sigma_{\mathsf{Sort}}$, let $A^i \notin \Sigma_\Sort$ be a new sort. Then we set
\[ \Sigma^\J_{\mathsf{Sort}} := \left\{ A^i : i \in \ob\J, A \in \Sigma_{\mathsf{Sort}}\right\}. \]
\item For any morphism $f : i \to j$ in $\J$ and $A \in \Sigma_{\mathsf{Sort}}$, let $\alpha_f^A : A^i \to A^j$ be a new unary function symbol $\notin \Sigma_\Fun$. For any $i \in \ob\J$ and function symbol $g : A_1 \times \ldots \times A_n \to A$ in $\Sigma_\Fun$, let $g^i : A^i_1 \times \ldots \times A^i_n \to A^i$ be a new function symbol $\notin \Sigma_\Fun$. Then we set
\[ \Sigma^\J_{\mathsf{Fun}} := \left\{\alpha_f^A : f \in \mor\J, A \in \Sigma_{\mathsf{Sort}}\right\} \ \bigcup \ \left\{g^i : g \in \Sigma_{\mathsf{Fun}}, i \in \ob\J\right\}. \] \qed
\end{itemize}
}
\end{defn}

\noindent Given a partial $\Sigma^\J$-structure, we now show how to derive component $\Sigma$-structures from it, indexed by the objects of $\J$. 

\begin{defn}
\label{componentstructures}
{\em Let $M$ be a partial $\Sigma^\J$-structure and let $i \in \ob\J$. We define a partial $\Sigma$-structure $M^i$ as follows: for any $A \in \Sigma_{\mathsf{Sort}}$, we set $M^i_A := M_{A^i}$, and for any function symbol $g : A_1 \times \ldots \times A_n \to A$ of $\Sigma$, we set $g^{M^i} := \left(g^i\right)^M : M_{A^i_1} \times \ldots \times M_{A^i_n} = M^i_{A_1} \times \ldots \times M^i_{A_n} \rightharpoondown M^i_A = M_{A^i}$. \qed
}
\end{defn}

\noindent From a morphism of $\Sigma^\J$-structures we can also obtain morphisms of the component $\Sigma$-structures:

\begin{defn}
\label{componenthoms}
{\em Let $h : M \to N$ be a $\SigmaJ$-morphism and let $i \in \ob\J$. Then we have a $\Sigma$-morphism $h^i : M^i \to N^i$ given by $h^i_A := h_{A^i} : M^i_A = M_{A^i} \to N_{A^i} = N^i_A$ for every $A \in \Sigma_{\mathsf{Sort}}$. \qed } 
\end{defn}

\noindent For any $i \in \ob\J$, we now define an `inclusion' signature morphism $\rho^i : \Sigma \to \Sigma^\J$. Recall from \cite[Section 5]{Horn} that a \emph{signature morphism} $\rho : \Sigma_1 \to \Sigma_2$ from a signature $\Sigma_1$ to a signature $\Sigma_2$ assigns to each sort $A$ of $\Sigma_1$ a sort $\rho(A)$ of $\Sigma_2$ and to each function symbol $g : A_1 \times \ldots \times A_n \to A$ of $\Sigma_1$ a function symbol $\rho(g) : \rho(A_1) \times \ldots \times \rho(A_n) \to \rho(A)$ of $\Sigma_2$. 

\begin{defn}
\label{objectsignaturemorphisms}
{\em For any $i \in \ob\J$, we define a signature morphism $\rho^i : \Sigma \to \SigmaJ$ as follows: for any $A \in \Sigma_{\mathsf{Sort}}$, we set $\rho^i(A) := A^i \in \SigmaJ_{\mathsf{Sort}}$, and for any function symbol $g : A_1 \times \ldots \times A_n \to A$ in $\Sigma_{\mathsf{Fun}}$, we set $\rho^i(g) := g^i : A^i_1 \times \ldots \times A^i_n \to A^i$. \qed
}
\end{defn}

\noindent We can now define the quasi-equational theory $\TJ$ that will axiomatize $\PTmod^\J$.

\begin{defn}
\label{thefunctortheory}
{\em We define $\TJ$ to be the quasi-equational theory over the signature $\SigmaJ$ whose axioms are the following sequents:

\begin{enumerate}

\item For any $f : i \to j$ in $\mor\J$ and $A \in \Sigma_{\mathsf{Sort}}$, the axiom $\top \vdash^{x : A^i} \alpha_f^A(x) \downarrow$. \label{alphasaretotal}

\item For any $i \in \ob\J$ and $A \in \Sigma_{\mathsf{Sort}}$, the axiom $\top \vdash^{x : A^i} \alpha_{\mathsf{id}_i}^A(x) = x$. \label{alphaidworks}

\item For any $f : i \to j$ and $g : j \to k$ in $\mor\J$ and $A \in \Sigma_{\mathsf{Sort}}$, the axiom $\top \vdash^{x : A^i} \alpha_g^A\left(\alpha_f^A(x)\right) = \alpha_{g \circ f}^A(x)$. \label{alphacompworks}

\item For any $f : i \to j$ in $\mor\J$ and $g : A_1 \times \ldots \times A_n \to A$ in $\Sigma_{\mathsf{Fun}}$, the axiom
\[ g^i(\mathsf{x}_1, \ldots, \mathsf{x}_n) \downarrow \ \ \vdash^{\mathsf{x}_1 : A^i_1, \ldots, \mathsf{x}_n : A^i_n} \alpha_f^A\left(g^i(\mathsf{x}_1, \ldots, \mathsf{x}_n)\right) = g^j\left(\alpha_f^{A_1}(\mathsf{x}_1), \ldots, \alpha_f^{A_n}(\mathsf{x}_n)\right). \] \label{alphasarehoms}
\item For any $i \in \ob\J$ and any axiom $\varphi \vdash^{\vec{x}} \psi$ of $\T$, the axiom $\rho^i(\varphi) \vdash^{\rho^i(\vec{x})} \rho^i(\psi)$. \label{axiomsofT} \qed
\end{enumerate}
}
\end{defn}

\begin{rmk}
\label{objectsigmorphismsaretheorymorphisms}
{\em
Recall from \cite[Section 5]{Horn} that if $\T_1$ and $\T_2$ are quasi-equational theories over respective signatures $\Sigma_1$ and $\Sigma_2$, then a signature morphism $\rho : \Sigma_1 \to \Sigma_2$ is a \emph{theory morphism} from $\T_1$ to $\T_2$ if the $\rho$-translation $\rho(\varphi) \vdash^{\rho(\vec{x})} \rho(\psi)$ of any axiom $\varphi \vdash^{\vec{x}} \psi$ of $\T_1$ is provable in $\T_2$, which then entails that $\rho$ preserves provability of sequents (see \cite[Lemma 2.2.16]{thesis}). A first easy property of $\TJ$ is now that for any $i \in \ob\J$, the signature morphism $\rho^i : \Sigma \to \SigmaJ$ of Definition \ref{objectsignaturemorphisms} is also a \emph{theory} morphism $\T \to \TJ$, because $\TJ$ includes the axioms in Definition \ref{thefunctortheory}.\ref{axiomsofT}. \qed
}
\end{rmk}

\noindent To begin studying the models of $\TJ$, we first make the following easy observation:

\begin{lem}
\label{componentsaremodelsofT}
If $M$ is a partial $\SigmaJ$-structure with $M \models \TJ$, then for any $i \in \ob\J$, the partial $\Sigma$-structure $M^i$ of Definition \ref{componentstructures} is a model of $\T$. 
\end{lem}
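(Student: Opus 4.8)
The plan is to verify directly that the partial $\Sigma$-structure $M^i$ satisfies every axiom of $\T$. Fix $i \in \ob\J$ and let $\varphi \vdash^{\vec{x}} \psi$ be an arbitrary axiom of $\T$. By Definition \ref{thefunctortheory}.\ref{axiomsofT}, the translated sequent $\rho^i(\varphi) \vdash^{\rho^i(\vec{x})} \rho^i(\psi)$ is an axiom of $\TJ$, and hence is satisfied by $M$ since $M \models \TJ$. So the task reduces to showing that satisfaction of the $\rho^i$-translation in $M$ is equivalent to satisfaction of the original sequent in $M^i$; that is, for any Horn formula $\chi(\vec{x})$ over $\Sigma$ we should have $\chi^{M^i} = \big(\rho^i(\chi)\big)^M$ (as subsets of $\prod M^i_{A_k} = \prod M_{A_k^i}$, which match up because $\rho^i(A_k) = A_k^i$ and $M^i_{A_k} = M_{A_k^i}$ by Definition \ref{componentstructures}).

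The main work is therefore an induction. First I would prove, by induction on the structure of a $\Sigma$-term $t(\vec{x}) : A$, that the partial functions $t^{M^i}$ and $\big(\rho^i(t)\big)^M$ coincide (same domain, same values): the variable case is immediate since $\rho^i$ fixes variables and projections are interpreted identically; the function-symbol case $t \equiv g(t_1,\dots,t_n)$ uses the inductive hypothesis on the $t_j$ together with the definitional identity $g^{M^i} = (g^i)^M = \big(\rho^i(g)\big)^M$ from Definition \ref{componentstructures} and Definition \ref{objectsignaturemorphisms}, so that the recursive clauses defining $t^{M^i}$ and $\big(\rho^i(t)\big)^M$ are literally the same equations. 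Then I would push this through the (shorter) induction on Horn formulas: for an equation $t_1 = t_2$ the satisfaction set is determined by the two term-interpretations, which agree by the previous step; the $\top$ case and the $\wedge$ case are immediate from the definitions and the inductive hypothesis. This gives $\chi^{M^i} = \big(\rho^i(\chi)\big)^M$ for every Horn formula $\chi$ over $\Sigma$.

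Having established this, the conclusion is immediate: $\varphi^{M^i} = \big(\rho^i(\varphi)\big)^M \subseteq \big(\rho^i(\psi)\big)^M = \psi^{M^i}$, where the inclusion holds because $M$ satisfies the $\TJ$-axiom $\rho^i(\varphi) \vdash^{\rho^i(\vec{x})} \rho^i(\psi)$. Since the axiom $\varphi \vdash^{\vec{x}} \psi$ of $\T$ was arbitrary, $M^i \models \T$, i.e. $M^i \in \Tmod$.

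I do not expect any genuine obstacle here — the result is essentially a bookkeeping lemma asserting that $M^i$ is the $\rho^i$-reduct of $M$ and that reducts along theory morphisms preserve models (cf. Remark \ref{objectsigmorphismsaretheorymorphisms} and the standard fact from \cite[Section 5]{Horn} that a theory morphism induces a reduct functor between model categories). The only mildly delicate point is keeping the identifications $M^i_A = M_{A^i}$ and $\rho^i(A) = A^i$ straight so that the term-interpretation induction goes through cleanly; once that is set up, every case is a direct unwinding of definitions. In the interest of brevity one could simply invoke that $M^i$ is by construction the reduct of $M$ along the theory morphism $\rho^i : \T \to \TJ$ and cite the preservation of models by reduct functors, but I would include the short term/formula induction for completeness.
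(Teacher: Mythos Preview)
Your proposal is correct and is essentially the same argument as the paper's, just unpacked: the paper simply observes that $M^i$ is the reduct $U^i(M)$ of $M$ along the theory morphism $\rho^i : \T \to \TJ$ (Remark \ref{objectsigmorphismsaretheorymorphisms}) and cites \cite[Proposition 28]{Horn} for the fact that reducts along theory morphisms preserve models. This is precisely the brief route you describe in your final paragraph; your term/formula induction is just the content of that cited result specialized to $\rho^i$.
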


\begin{proof}
Since $\rho^i : \T \to \TJ$ is a theory morphism by Remark \ref{objectsigmorphismsaretheorymorphisms}, it follows by \cite[Proposition 28]{Horn} that $U^i(M)$ is a model of $\T$, where $U^i : \PTJmod$ $\to \PTmod$ is the forgetful functor induced by the signature morphism $\rho^i$. However, it is trivial to observe that $U^i(M) = M^i$, so that $M^i$ is indeed a model of $\T$.
\end{proof}

\noindent We now have: 

\begin{prop}
\label{modelsarefunctors}
There is an isomorphism of categories $\PTJmod \cong \PTmod^\J$.
\end{prop}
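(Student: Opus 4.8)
The plan is to construct an equivalence explicitly on both objects and morphisms, and then check functoriality and that the two passages are mutually inverse. Starting from a model $M \models \TJ$, I would assign the functor $\widehat{M} : \J \to \Tmod$ defined on objects by $\widehat{M}(i) := M^i$ (the component $\Sigma$-structure of Definition \ref{componentstructures}), which lands in $\Tmod$ by Lemma \ref{componentsaremodelsofT}, and on a morphism $f : i \to j$ of $\J$ by the $\Sigma$-morphism $\widehat{M}(f) : M^i \to M^j$ whose component at a sort $A$ is the function $\left(\alpha_f^A\right)^M : M_{A^i} \to M_{A^j}$. Axiom \ref{thefunctortheory}.\ref{alphasaretotal} guarantees this function is total (so it is a genuine function $M^i_A \to M^j_A$), and axiom \ref{thefunctortheory}.\ref{alphasarehoms} is exactly the statement that the family $\left(\left(\alpha_f^A\right)^M\right)_A$ is a $\Sigma$-morphism $M^i \to M^j$; axioms \ref{thefunctortheory}.\ref{alphaidworks} and \ref{thefunctortheory}.\ref{alphacompworks} say precisely that $\widehat{M}(\id_i) = \id_{M^i}$ and $\widehat{M}(g \circ f) = \widehat{M}(g) \circ \widehat{M}(f)$, so $\widehat{M}$ is a functor. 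On morphisms, a $\SigmaJ$-morphism $h : M \to N$ is sent to the natural transformation $\widehat{h} : \widehat{M} \Rightarrow \widehat{N}$ whose component at $i \in \ob\J$ is $h^i : M^i \to N^i$ (Definition \ref{componenthoms}); naturality of $\widehat{h}$ at $f : i \to j$ is the commutativity $h_{A^j} \circ \left(\alpha_f^A\right)^M = \left(\alpha_f^A\right)^N \circ h_{A^i}$ for each $A$, which is just an instance of the defining condition for $h$ to be a $\SigmaJ$-morphism applied to the function symbol $\alpha_f^A$ (using that the $\alpha$'s are total). Functoriality of the assignment $M \mapsto \widehat{M}$, $h \mapsto \widehat{h}$ is then immediate from the componentwise definitions.

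For the inverse direction, given a functor $F : \J \to \Tmod$, I would build a partial $\SigmaJ$-structure $\widetilde{F}$ by setting $\widetilde{F}_{A^i} := F(i)_A$ for each sort $A^i$ of $\SigmaJ$, interpreting each $g^i$ (for $g$ a function symbol of $\Sigma$) as the partial function $g^{F(i)}$ on the relevant components, and interpreting each unary symbol $\alpha_f^A$ as the (total) function $F(f)_A : F(i)_A \to F(j)_A$. One checks directly that $\widetilde{F} \models \TJ$: axioms of type \ref{thefunctortheory}.\ref{alphasaretotal} hold because $F(f)_A$ is total, types \ref{thefunctortheory}.\ref{alphaidworks} and \ref{thefunctortheory}.\ref{alphacompworks} hold by functoriality of $F$, type \ref{thefunctortheory}.\ref{alphasarehoms} holds because each $F(f)$ is a $\Sigma$-morphism, and type \ref{thefunctortheory}.\ref{axiomsofT} holds because each $F(i)$ is a model of $\T$ (here one uses that $\rho^i$-translation of a sequent is interpreted in $\widetilde{F}$ exactly as the original sequent is interpreted in $\widetilde{F}^i = F(i)$, i.e. $\left(\rho^i(\varphi)\right)^{\widetilde{F}} = \varphi^{F(i)}$, which follows by a routine induction on terms and Horn formulas from the definitions of $\rho^i$ and the component structure). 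A natural transformation $\eta : F \Rightarrow F'$ is sent to the $\SigmaJ$-morphism with components $\left(\eta_i\right)_A$ at the sort $A^i$; the $\SigmaJ$-morphism conditions for the symbols $g^i$ follow from each $\eta_i$ being a $\Sigma$-morphism, and for the symbols $\alpha_f^A$ from naturality of $\eta$.

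Finally I would verify that these two passages are mutually inverse (indeed equal as functors to the respective identities, not merely naturally isomorphic): unwinding the definitions, $\widehat{\widetilde{F}} = F$ on the nose, since $\widehat{\widetilde{F}}(i)_A = \widetilde{F}_{A^i} = F(i)_A$ with the same operations, and $\widehat{\widetilde{F}}(f)_A = \left(\alpha_f^A\right)^{\widetilde{F}} = F(f)_A$; symmetrically $\widetilde{\widehat{M}} = M$ because $\widetilde{\widehat{M}}_{A^i} = \widehat{M}(i)_A = M^i_A = M_{A^i}$ and the interpretations of $g^i$ and $\alpha_f^A$ in $\widetilde{\widehat{M}}$ unwind to $\left(g^i\right)^M$ and $\left(\alpha_f^A\right)^M$ respectively; and the morphism assignments are visibly inverse since they only rearrange components. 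Hence the two functors constitute an isomorphism of categories $\PTJmod \cong \PTmod^\J$. None of the steps is genuinely difficult; the only mild bookkeeping point — the main thing to be careful about — is the induction showing $\left(\rho^i(\varphi)\right)^{\widetilde{F}} = \varphi^{F(i)}$ (equivalently, that passing to the $i$-th component structure commutes with interpreting $\rho^i$-translated terms and formulas), which is what makes axiom scheme \ref{thefunctortheory}.\ref{axiomsofT} do its job; this can reasonably be cited to the analogous statement about forgetful functors induced by signature morphisms in \cite{Horn} (as was already done in the proof of Lemma \ref{componentsaremodelsofT}) rather than reproved.
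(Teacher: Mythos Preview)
Your proposal is correct and follows essentially the same approach as the paper's proof: both construct the object-level bijection $M \leftrightarrow F^M$ exactly as you describe (with the same appeals to the five axiom groups of Definition \ref{thefunctortheory}), and the paper simply defers the morphism-level and mutual-inverse verifications to \cite[Proposition 5.1.8]{thesis}, which you have spelled out. Your explicit remark about the induction $\left(\rho^i(\varphi)\right)^{\widetilde{F}} = \varphi^{F(i)}$ is a nice addition that the paper absorbs into its citation of \cite{Horn} via Lemma \ref{componentsaremodelsofT}.
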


\begin{proof}
We sketch the bijection between the objects of $\TJmod$ and $\Tmod^\J$ and refer the reader to \cite[Proposition 5.1.8]{thesis} for the remaining straightforward details. Given $M \in \TJmod$, we must define a corresponding functor $F^M : \J \to \Tmod$. For any $i \in \ob\J$, we let $F^M(i)$ be the $\T$-model $M^i$ of Lemma \ref{componentsaremodelsofT}. For any morphism $f : i \to j$ of $\J$, we define the $\Sigma$-morphism $F^M(f) : M^i \to M^j$ as follows: for any $A \in \Sigma_\Sort$, we define $F^M(f)_A : M^i_A = M_{A^i} \to M_{A^j} \to M^j_A$ as $F^M(f)_A := \left(\alpha_f^A\right)^M$. Because of Axiom \ref{thefunctortheory}.\ref{alphasaretotal}, it follows that each function $F^M(f)_A = \left(\alpha_f^A\right)^M$ is total (as needed). The functoriality of $F^M$ follows from Axioms \ref{thefunctortheory}.\ref{alphaidworks} and \ref{thefunctortheory}.\ref{alphacompworks}, and the fact that $F^M(f) : M^i \to M^j$ is a $\Sigma$-morphism follows from Axiom \ref{thefunctortheory}.\ref{alphasarehoms}. This proves that $F^M : \J \to \Tmod$ is a well-defined functor. Conversely, starting from a functor $F : \J \to \Tmod$, we define a model $M^F \in \TJmod$ as follows: for any $A \in \Sigma_\Sort$ and $i \in \ob\J$, we set $M^F_{A^i} := F(i)_A$, for any morphism $f : i \to j$ in $\J$, we set $\left(\alpha_f^A\right)^{M^F} := F(f)_A$, and for any function symbol $g : A_1 \times \ldots \times A_n \to A$ of $\Sigma$, we set $\left(g^i\right)^{M^F} := g^{F(i)}$. The functoriality of $F$ guarantees that $M^F$ satisfies Axioms \ref{thefunctortheory}.\ref{alphasaretotal}, \ref{thefunctortheory}.\ref{alphaidworks}, and \ref{thefunctortheory}.\ref{alphacompworks}, the fact that each $F(f)$ is a $\Sigma$-morphism guarantees that $M^F$ satisfies Axiom \ref{thefunctortheory}.\ref{alphasarehoms}, and the fact that each $F(i)$ is a $\T$-model guarantees that $M^F$ satisfies Axiom \ref{thefunctortheory}.\ref{axiomsofT}. So $M^F$ is indeed a $\TJ$-model, and it is now straightforward to observe that the assignments $M \mapsto F^M$ and $F \mapsto M^F$ are mutually inverse.    
\end{proof}

\noindent Before we can start to characterize the covariant isotropy group of $\TJ$, we first require the following purely group-theoretic fact, whose proof is a routine verification. 

\begin{lem}
\label{grouptheorylemma}
Let $F : \J \to \mathsf{Group}$ be a functor, and consider the product group $\prod_{i \in \ob\J} F(i)$. Then
\[ \left(\prod_{i \in \ob\J} F(i)\right)^F := \left\{ (g_i)_{i} \in \prod_{i \in \ob\J} F(i) \ \colon F(f)(g_j) = g_k \ \forall f : j \to k \in \mor\J \right\} \] is a subgroup of $\prod_{i \in \ob\J} F(i)$. Furthermore, this assignment is the object part of a functor \linebreak $\left(\prod_{i \in \ob\J} (-)(i)\right)^{(-)} = \mathsf{lim} : \mathsf{Group}^\J \to \mathsf{Group}$. \qed
\end{lem}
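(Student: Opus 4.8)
The statement to prove is Lemma \ref{grouptheorylemma}: for a functor $F : \J \to \Group$, the set $\left(\prod_{i} F(i)\right)^F$ of \emph{compatible} (i.e. cone-like) tuples is a subgroup of the product, and the construction is functorial in $F$, realizing the limit functor $\Group^\J \to \Group$.

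\bigskip

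The plan is to verify the subgroup axioms directly and then identify the construction with the ordinary limit. First I would check that $\left(\prod_{i} F(i)\right)^F$ is a subgroup of $\prod_{i} F(i)$ under the pointwise operations. The identity tuple $(e_i)_i$ satisfies the compatibility condition because each $F(f) : F(j) \to F(k)$ is a group homomorphism, hence $F(f)(e_j) = e_k$. If $(g_i)_i$ and $(h_i)_i$ both satisfy the condition, then for any $f : j \to k$ we have $F(f)(g_j h_j) = F(f)(g_j) F(f)(h_j) = g_k h_k$, again using that $F(f)$ is a homomorphism, so the pointwise product lies in the subset; and $F(f)(g_j^{-1}) = F(f)(g_j)^{-1} = g_k^{-1}$ shows closure under inverses. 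This establishes the first assertion.

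\bigskip

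For the functoriality claim, given a natural transformation $\eta : F \Rightarrow F'$ between functors $\J \to \Group$, I would define $\eta_* : \left(\prod_i F(i)\right)^F \to \left(\prod_i F'(i)\right)^{F'}$ by applying the components pointwise, $(g_i)_i \mapsto (\eta_i(g_i))_i$. This is a group homomorphism because each $\eta_i$ is, and it lands in the compatible tuples of $F'$ by the naturality square: for $f : j \to k$, $F'(f)(\eta_j(g_j)) = \eta_k(F(f)(g_j)) = \eta_k(g_k)$. Preservation of identities and composites is immediate from the pointwise definition. Finally, one observes that this is precisely the limit functor: $\left(\prod_i F(i)\right)^F$ with its projections $\pi_i$ to $F(i)$ is a cone over $F$ in $\Group$, and the compatibility condition is exactly the statement that the cone commutes with the morphisms $F(f)$; the universal property follows since a cone with vertex $H$ is a family of homomorphisms $H \to F(i)$ whose induced map $H \to \prod_i F(i)$ factors uniquely through the compatible tuples. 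Since limits in $\Group$ are computed as in $\Sets$ with the induced group structure, this identifies the construction with $\limit : \Group^\J \to \Group$.

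\bigskip

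There is no real obstacle here: every step is a routine diagram chase or an invocation of the fact that the maps in sight (the $F(f)$ and the $\eta_i$) are group homomorphisms. The only point requiring a modicum of care is bookkeeping the direction of the compatibility condition ($F(f)(g_j) = g_k$ for $f : j \to k$) consistently when verifying closure under the group operations and when checking naturality, but this is purely a matter of notation. Accordingly I would present the verification tersely, as the lemma statement already signals ("whose proof is a routine verification").
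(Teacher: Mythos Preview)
Your proposal is correct and matches the paper's stance exactly: the paper does not give a proof, marking the lemma with a \qed{} and noting only that it is a routine verification, which is precisely the elementary check of the subgroup axioms and pointwise functoriality that you carry out.
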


\noindent We can now begin to characterize the covariant isotropy groups of models of $\TJ$. If $M \in \PTJmod$, then by the proof of Proposition \ref{modelsarefunctors}, there is a corresponding functor $F^M : \J \to \PTmod$. If $G_{\T} : \PTmod \to \mathsf{Group}$ is the functor naturally isomorphic to the covariant isotropy group $\Z_{\T} : \Tmod \to \Group$ by Section \ref{background}, then we obtain the composite functor $G_{\T} \circ F^M : \J \to \mathsf{Group}$ with \[ \left(G_{\T} \circ F^M\right)(i) = G_{\T}\left(F^M(i)\right) = G_{\T}\left(M^i\right) \] for every $i \in \ob\J$. By Lemma \ref{grouptheorylemma}, it then follows that $\left(\prod_i G_{\T}\left(M^i\right)\right)^{G_{\T} \circ F^M}$ is a subgroup of $\prod_i G_{\T}\left(M^i\right)$, and hence in particular is a group. Let us denote this subgroup with the cleaner notation $\left(\prod_i G_{\T}\left(M^i\right)\right)^\J$. 

Next, we will need to define a certain group $\mathsf{Aut}(\mathsf{Id}_\J)^M$, where $\Aut(\Id_\J)$ is the group of natural automorphisms of the identity functor $\Id_\J : \J \to \J$. Its definition is somewhat subtle and unintuitive, so we ask the reader to bear with us until after we have defined it, at which point we will try to give some explanation for the technicalities in its definition.

For any $i \in \ob\J$ and $B \in \Sigma_{\mathsf{Sort}}$, we say that the diagram theory $\T\left(M^i\right)$ of the $\T$-model $M^i$ is \emph{trivial} for the sort $B$ if $\T\left(M^i\right) \vdash^{y, y'} y = y'$ for distinct variables $y, y' : B$. Otherwise, we say that $\T\left(M^i\right)$ is \emph{non-trivial} for the sort $B$. To say that $\T\left(M^i\right)$ is trivial for the sort $B$ is equivalent to saying (by \cite[Lemma 3.1.2]{thesis}) that for any $\T$-model $N$ for which there is a $\Sigma$-morphism $M^i \to N$, the carrier set $N_B$ has at most one element. For any $B \in \Sigma_{\mathsf{Sort}}$, we let $\J^M_B$ be the full subcategory of $\J$ on those objects $i \in \ob\J$ for which $\T\left(M^i\right)$ is non-trivial for the sort $B$. We then let $\mathsf{Aut}\left(\mathsf{Id}_{\J^M_B}\right)$ be the group of natural automorphisms of the identity functor $\mathsf{Id}_{\J^M_B} : \J^M_B \to \J^M_B$.

We will need to consider a certain subgroup of $\prod_{B \in \Sigma_{\mathsf{Sort}}} \mathsf{Aut}\left(\mathsf{Id}_{\J^M_B}\right)$, which we will call $\mathsf{Aut}(\mathsf{Id}_\J)^M$. To define this subgroup, we first require the following definition:

\begin{defn}
\label{degenerate}
{\em Let $M \in \PTJmod$, let $g : A_1 \times \ldots \times A_n \to A$ be a function symbol of $\Sigma$ with $n \geq 1$, and let $i \in \ob\J$. Then for any $1 \leq m \leq n$, we say that $g^{M^i}$ is \emph{degenerate in position} $m$ if 
\[ \T\left(M^i\right) \vdash^{y_1, \ldots, y_n, z_m} g(y_1, \ldots, y_n) = g(y_1, \ldots, y_n)[z_m/y_m], \] where $y_1, \ldots, y_n, z_m$ are pairwise distinct variables of the appropriate sorts. Otherwise, if $\T\left(M^i\right)$ does \emph{not} prove the above equation, we say that $g^{M^i}$ is \emph{non-degenerate in position} $m$.
\qed }
\end{defn}

\noindent Thus, to say that $g^{M^i}$ is degenerate in position $1 \leq m \leq n$ is equivalent to saying (again by \cite[Lemma 3.1.2]{thesis}) that for any $\T$-model $N$ for which there is a $\Sigma$-morphism $M^i \to N$ and any elements $a_1 \in N_{A_1}, \ldots, a_m, b_m \in N_{A_m}, \ldots, a_n \in N_{A_n}$, we have $g^N(a_1, \ldots, a_m, \ldots, a_n) = g^N(a_1, \ldots, b_m, \ldots, a_n)$ (i.e. the value of $g^N$ does not change when the $m$th coordinate of an input $n$-tuple changes). We can now define: 

\begin{defn}
\label{automorphismsubgroup}
{\em Let $M \in \PTJmod$. We denote an element of $\prod_{B \in \Sigma_{\mathsf{Sort}}} \mathsf{Aut}\left(\mathsf{Id}_{\J^M_B}\right)$ by $\psi = (\psi_B)_{B \in \Sigma}$, so that each $\psi_B$ is a natural automorphism of $\mathsf{Id}_{\J^M_B}$, with components $\psi_B(i) : i \xrightarrow{\sim} i$ for each $i \in \ob\J^M_B$.

We define \[ \mathsf{Aut}(\mathsf{Id}_\J)^M \subseteq \prod_{B \in \Sigma_{\mathsf{Sort}}} \mathsf{Aut}\left(\mathsf{Id}_{\J^M_B}\right) \] to consist of exactly those elements $\psi \in \prod_{B \in \Sigma_{\mathsf{Sort}}} \mathsf{Aut}\left(\mathsf{Id}_{\J^M_B}\right)$ with the following property: if \linebreak $g : A_1 \times \ldots \times A_n \to A$ is any function symbol of $\Sigma$ with $n \geq 1$, then for any $i \in \ob\J$ and $1 \leq m \leq n$ for which $g^{M^i}$ is non-degenerate in position $m$, we have $\psi_{A_m}(i) = \psi_A(i) : i \xrightarrow{\sim} i$. 

This property is well-defined, in the sense that if $g^{M^i}$ is non-degenerate in position $m$, then it easily follows that $\T\left(M^i\right)$ must be non-trivial for the sorts $A$ and $A_m$, so that $i$ must be an object of both $\J^M_A$ and $\J^M_{A_m}$, and hence $\psi_A(i)$ and $\psi_{A_m}(i)$ are both well-defined morphisms of $\J$. It is then trivial to verify that $\mathsf{Aut}(\mathsf{Id}_\J)^M$ is indeed a \emph{subgroup} of $\prod_{B \in \Sigma_{\mathsf{Sort}}} \mathsf{Aut}\left(\mathsf{Id}_{\J^M_B}\right)$, and hence is a group.\qed }
\end{defn}

Our ultimate goal in this section is now to show for any quasi-equational theory $\T$ (satisfying two mild conditions, see Proposition \ref{betaissurjective}), any small index category $\J$, and any $M \in \PTJmod$ that
\[ G_{\TJ}(M) \cong \left(\prod_{i \in \J} G_{\T}\left(M^i\right)\right)^\J \times \mathsf{Aut}(\mathsf{Id}_\J)^M, \] naturally in $M$. Specifically, we will construct a group isomorphism
\[ \beta_M : \left(\prod_{i \in \J} G_{\T}\left(M^i\right)\right)^\J \times \mathsf{Aut}(\mathsf{Id}_\J)^M \xrightarrow{\sim} G_{\TJ}(M) \] for each $M \in \PTJmod$. 

As promised, let us now attempt to give some explanation of the technicalities involved in the definition of $\mathsf{Aut}(\mathsf{Id}_\J)^M$. First, let us discuss why for each sort $B \in \Sigma_\Sort$ we needed to consider the full subcategory $\J_B^M$ of $\J$ on those objects $i \in \ob\J$ for which $\T\left(M^i\right)$ is non-trivial for the sort $B$, rather than just the whole category $\J$. Essentially, the reason is that if we considered $\J$ rather than $\J_B^M$, then the group homomorphism $\beta_M$ that we will define in Proposition \ref{grouphomexists} will \emph{not} be injective in general. Indeed, we show in detail on \cite[Page 151]{thesis} that if $\T$ is the single-sorted algebraic theory of commutative unital rings, $\J$ is any one-object category for which $\Aut(\Id_\J)$ is non-trivial, and $F : \J \to \Tmod = \mathsf{CRing}$ is the constant functor on the zero ring with corresponding $\TJ$-model $M$, then $\beta_M$ will \emph{not} be injective if we consider $\J$ rather than $\J_X^M$ (where $X$ is the unique sort of $\T$).  

This will hopefully help to convince the reader that we need to define $\Aut(\Id_\J)^M$ to be a subgroup of $\prod_{\B \in \Sigma_\Sort} \Aut\left(\Id_{\J^M_B}\right)$ rather than $\prod_{\B \in \Sigma_\Sort} \Aut\left(\Id_{\J}\right)$. Now let us try to motivate why we cannot just define $\Aut(\Id_\J)^M$ to be the full group $\prod_{\B \in \Sigma_\Sort} \Aut\left(\Id_{\J^M_B}\right)$ (when $\T$ is \emph{multi}-sorted). A first vague intuition is that if $\psi = (\psi_B)_{B \in \Sigma} \in \prod_{\B \in \Sigma_\Sort} \Aut\left(\Id_{\J^M_B}\right)$, then we need the distinct $\psi_B$'s to `interact' properly, if there are (non-degenerate) function symbols in $\Sigma_\Fun$ that `connect' distinct sorts. Indeed, we show in detail on \cite[Page 152]{thesis} that if $\J$ is the one-object category corresponding to the group $\mathbb{Z}_2$ and $\T$ is the quasi-equational theory with two sorts $X$ and $Y$ and one function symbol $f : X \to Y$ and the single axiom asserting that $f$ is always defined, then there is a model $M$ of $\TJ$ for which the group homomorphism $\beta_M$ defined in Proposition \ref{grouphomexists} will \emph{not} land in $G_{\TJ}(M)$, if we do not define $\Aut(\Id_\J)^M$ as we do in Definition \ref{automorphismsubgroup}.  

Now, towards constructing the group homomorphisms $\beta_M$ in Proposition \ref{grouphomexists}, we require the following technical definitions and lemmas.

\begin{defn}
\label{sigmorphismofM^i}
{\em Let $M \in \PTJmod$, $i \in \ob\J$, and $C \in \Sigma_{\mathsf{Sort}}$. We define a signature morphism \[ \rho_{M^i}^C : \Sigma\left(M^i, \mathsf{x}_C\right) \to \SigmaJ\left(M, \mathsf{x}_{C^i}\right) \] as follows (where $\mathsf{x}_C \notin \Sigma\left(M^i\right)$ and $\mathsf{x}_{C^i} \notin \SigmaJ(M)$ are new constants of sorts $C$ and $C^i$, respectively):

\begin{itemize}

\item On $\Sigma \subseteq \Sigma\left(M^i, \mathsf{x}_C\right)$, we stipulate that $\rho_{M^i}^C$ agrees with $\rho^i : \Sigma \to \SigmaJ$ (see Definition \ref{objectsignaturemorphisms}). 

\item If $s \in M^i_A = M_{A^i}$ for some $A \in \Sigma_{\mathsf{Sort}}$, then we set $\rho_{M^i}^C\left(c_{A, s}^{M^i}\right) := c_{A^i, s}^M \in \SigmaJ\left(M, \mathsf{x}_{C^i}\right)$. 

\item We set $\rho_{M^i}^C(\mathsf{x}_C) := \mathsf{x}_{C^i} \in \SigmaJ\left(M, \mathsf{x}_{C^i}\right)$. \qed
\end{itemize}
}
\end{defn}

\noindent We now have the following lemma, whose easy proof may be found in \cite[Lemma 5.1.13]{thesis}:

\begin{lem}
\label{theorymorphismofM^i}
For any $M \in \PTJmod$, $i \in \ob\J$, and $C \in \Sigma_{\mathsf{Sort}}$, the signature morphism $\rho_{M^i}^C$ is a theory morphism $\rho_{M^i}^C : \T\left(M^i, \mathsf{x}_C\right) \to \TJ\left(M, \mathsf{x}_{C^i}\right)$. \qed
\end{lem}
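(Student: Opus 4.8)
The plan is to verify the defining condition of a theory morphism directly: that the $\rho_{M^i}^C$-translation of each axiom of $\T\left(M^i, \x_C\right)$ is provable in $\TJ\left(M, \x_{C^i}\right)$. I would organize this by cases according to the three families of axioms comprising $\T\left(M^i, \x_C\right)$ (for the diagram theory, cf.\ \cite[Definition 2.2.3]{thesis}): the axioms inherited from $\T$ over $\Sigma$; the diagram axioms for $M^i$ (definedness of each constant $c^{M^i}_{A,s}$, and the axioms recording how the operations of $\Sigma$ act on these constants); and the single axiom $\top \vdash \x_C \downarrow$ asserting that the adjoined indeterminate is defined.

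For an axiom coming from the base theory $\T$: on this sublanguage $\rho_{M^i}^C$ agrees with $\rho^i : \Sigma \to \SigmaJ$ by the first clause of Definition \ref{sigmorphismofM^i}, and $\rho^i$ is a theory morphism $\T \to \TJ$ by Remark \ref{objectsigmorphismsaretheorymorphisms}; since $\TJ\left(M, \x_{C^i}\right)$ extends $\TJ$, the translated sequent is provable there. For a diagram axiom of $\T\left(M^i\right)$ involving a sort $A \in \Sigma_\Sort$, elements $s \in M^i_A$, and an operation $g : A_1 \times \ldots \times A_n \to A$ of $\Sigma$, I would use that $\rho_{M^i}^C$ sends $c^{M^i}_{A,s}$ to $c^M_{A^i, s}$ and $g$ to $g^i$, together with the identifications $M^i_A = M_{A^i}$ and $g^{M^i} = \left(g^i\right)^M$ from Definition \ref{componentstructures}; under these, the $\rho_{M^i}^C$-translation of such an axiom is, verbatim, one of the diagram axioms of $\TJ(M)$ for the sort $A^i$ and operation $g^i$, hence is provable in $\TJ\left(M, \x_{C^i}\right)$. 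Finally, by the last clause of Definition \ref{sigmorphismofM^i} the translation of $\top \vdash \x_C \downarrow$ is $\top \vdash \x_{C^i} \downarrow$, which is precisely the sequent added in forming $\TJ\left(M, \x_{C^i}\right)$ from $\TJ(M)$. This exhausts the axioms, so $\rho_{M^i}^C$ is a theory morphism.

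There is no genuine obstacle here; the only point requiring care is matching up the precise syntactic shape of the diagram axioms under the translation, which is pure bookkeeping and is carried out in \cite[Lemma 5.1.13]{thesis}.
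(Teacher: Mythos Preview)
Your proposal is correct and follows exactly the approach one would expect: checking case-by-case that the $\rho_{M^i}^C$-translation of each axiom of $\T\left(M^i, \x_C\right)$ is provable in $\TJ\left(M, \x_{C^i}\right)$, using Remark~\ref{objectsigmorphismsaretheorymorphisms} for the base-theory axioms and the identifications of Definition~\ref{componentstructures} for the diagram axioms. The paper itself defers this proof to \cite[Lemma 5.1.13]{thesis}, and your argument is precisely the bookkeeping one finds there.
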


\noindent The proof of the next lemma is immediate from the definitions: 

\begin{lem}
\label{theorymorphismofM^isubst}
Let $M \in \PTJmod$, $i \in \ob\J$, and $C \in \Sigma_{\mathsf{Sort}}$. For any $u, v \in \mathsf{Term}^c\left(\Sigma\left(M^i, \mathsf{x}_C\right)\right)$ with $v : C$, we have $\rho_{M^i}^C\left(u\left[v/\mathsf{x}_C\right]\right) \equiv \rho_{M^i}^C(u)\left[\rho_{M^i}^C(v)/\mathsf{x}_{C^i}\right]$. \qed
\end{lem}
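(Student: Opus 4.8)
The plan is to prove Lemma~\ref{theorymorphismofM^isubst} by a straightforward structural induction on the term $u \in \mathsf{Term}^c\left(\Sigma\left(M^i, \mathsf{x}_C\right)\right)$, exploiting the fact that $\rho_{M^i}^C$ is a signature morphism and hence, by definition, commutes with the formation of terms from function symbols. First I would recall that for a signature morphism $\rho : \Sigma_1 \to \Sigma_2$ and any term $t = h(t_1, \ldots, t_n) \in \mathsf{Term}(\Sigma_1)$, one has $\rho(t) \equiv \rho(h)\left(\rho(t_1), \ldots, \rho(t_n)\right)$ by the very definition of how a signature morphism acts on terms, and $\rho$ sends each variable to itself (or, in the case of the distinguished constants $\mathsf{x}_C$ and the $c_{A,s}^{M^i}$, to the prescribed value). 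The claim is then that this action commutes with the substitution operation $-[v/\mathsf{x}_C]$.

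The inductive argument would run as follows. In the base cases, $u$ is either a variable, an ordinary constant or diagram constant $c_{A,s}^{M^i}$, or the distinguished constant $\mathsf{x}_C$. If $u$ is a variable $\ne \mathsf{x}_C$ or a constant $c$ other than $\mathsf{x}_C$, then $u[v/\mathsf{x}_C] \equiv u$, and also $\rho_{M^i}^C(u)[\rho_{M^i}^C(v)/\mathsf{x}_{C^i}] \equiv \rho_{M^i}^C(u)$ since $\rho_{M^i}^C(u)$ (a variable $\ne \mathsf{x}_{C^i}$, or a diagram constant $c_{A^i,s}^M$, or an ordinary constant $g^i$ applied to nothing) does not contain $\mathsf{x}_{C^i}$; hence both sides equal $\rho_{M^i}^C(u)$. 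If $u \equiv \mathsf{x}_C$, then $u[v/\mathsf{x}_C] \equiv v$, so the left side is $\rho_{M^i}^C(v)$, while the right side is $\rho_{M^i}^C(\mathsf{x}_C)[\rho_{M^i}^C(v)/\mathsf{x}_{C^i}] \equiv \mathsf{x}_{C^i}[\rho_{M^i}^C(v)/\mathsf{x}_{C^i}] \equiv \rho_{M^i}^C(v)$, using that $\rho_{M^i}^C(\mathsf{x}_C) = \mathsf{x}_{C^i}$. For the inductive step, suppose $u \equiv h(u_1, \ldots, u_n)$ where $h$ is a function symbol of $\Sigma(M^i, \mathsf{x}_C)$ (necessarily a function symbol of $\Sigma$, since the diagram/indeterminate constants are nullary) and each $u_j$ satisfies the inductive hypothesis. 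Then $u[v/\mathsf{x}_C] \equiv h(u_1[v/\mathsf{x}_C], \ldots, u_n[v/\mathsf{x}_C])$ by the definition of substitution, so applying $\rho_{M^i}^C$ gives $\rho_{M^i}^C(h)\left(\rho_{M^i}^C(u_1[v/\mathsf{x}_C]), \ldots, \rho_{M^i}^C(u_n[v/\mathsf{x}_C])\right)$; by the inductive hypothesis each $\rho_{M^i}^C(u_j[v/\mathsf{x}_C]) \equiv \rho_{M^i}^C(u_j)[\rho_{M^i}^C(v)/\mathsf{x}_{C^i}]$, and then the definition of substitution applied in reverse yields $\rho_{M^i}^C(h)\left(\rho_{M^i}^C(u_1), \ldots, \rho_{M^i}^C(u_n)\right)[\rho_{M^i}^C(v)/\mathsf{x}_{C^i}] \equiv \rho_{M^i}^C(u)[\rho_{M^i}^C(v)/\mathsf{x}_{C^i}]$, as desired.

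I expect there to be essentially no obstacle: the statement is, as the paper itself notes, "immediate from the definitions," and the only mild care needed is to observe that the constants being substituted for (and their images) are nullary symbols, so that no variable-capture or sorting subtleties arise, and that a signature morphism acts on terms by a definitional recursion that visibly commutes with substitution. The one bookkeeping point worth stating explicitly is that $\rho_{M^i}^C$ is well-defined on the sort of $v$ (namely $C \mapsto C^i$), which guarantees that $\rho_{M^i}^C(v) : C^i$ and hence that $-[\rho_{M^i}^C(v)/\mathsf{x}_{C^i}]$ is a legitimate substitution on $\SigmaJ(M, \mathsf{x}_{C^i})$-terms. In the write-up I would simply cite the standard fact that signature morphisms commute with term substitution (e.g. the analogue of \cite[Remark 2.2.21]{thesis} together with the definition of the action of a signature morphism on terms) and remark that the present lemma is the instance of that fact for the particular signature morphism $\rho_{M^i}^C$, with the base case $u \equiv \mathsf{x}_C$ handled by the stipulation $\rho_{M^i}^C(\mathsf{x}_C) = \mathsf{x}_{C^i}$.
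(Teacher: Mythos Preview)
Your proposal is correct and is precisely the routine structural induction that the paper has in mind when it says the result is ``immediate from the definitions'' (note that the paper gives no proof beyond the \textbackslash qed). Your write-up simply unpacks what that phrase means; there is nothing to add or correct.
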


\noindent We will require the following signature morphisms indexed by the elements of $\mor\J$:

\begin{defn}
\label{sigmorphismsigma}
{\em Let $M \in \PTJmod$, $f : i \to j \in \mor\J$, and $C \in \Sigma_{\mathsf{Sort}}$. We define a signature morphism
\[ \sigma_{f}^C : \SigmaJ\left(M, \mathsf{x}_{C^j}\right) \to \SigmaJ\left(M, \mathsf{x}_{C^i}\right) \] as follows: on $\SigmaJ(M)$ we define $\sigma_{f}^C$ to be the inclusion into $\SigmaJ\left(M, \mathsf{x}_{C^i}\right)$, and we set $\sigma_{f}^C\left(\mathsf{x}_{C^j}\right) := \alpha_f^C\left(\mathsf{x}_{C^i}\right) : C^j$. \qed
}
\end{defn}

\noindent Since $\TJ\left(M, \mathsf{x}_{C^i}\right) \vdash \alpha_f^C(\mathsf{x}_{C^i}) \downarrow$, we then easily obtain: 
\begin{lem}
\label{theorymorphismsigma}
For any $M \in \PTJmod$, $f : i \to j \in \mor\J$, and $C \in \Sigma_{\mathsf{Sort}}$, the signature morphism $\sigma_{f}^C$ is a theory morphism $\sigma_{f}^C : \TJ\left(M, \mathsf{x}_{C^j}\right) \to \TJ\left(M, \mathsf{x}_{C^i}\right)$. \qed
\end{lem}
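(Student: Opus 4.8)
The plan is to verify directly the defining condition of a theory morphism (cf. Remark \ref{objectsigmorphismsaretheorymorphisms}): the $\sigma_f^C$-translation of every axiom of $\TJ(M,\mathsf{x}_{C^j})$ must be provable in $\TJ(M,\mathsf{x}_{C^i})$. So I would first recall the precise list of axioms of $\TJ(M,\mathsf{x}_{C^j})$. These fall into three groups: the axioms of $\TJ$ itself (Definition \ref{thefunctortheory}); the diagram axioms of $\TJ(M)$, asserting that each constant $c_{A^i,s}$ is defined and that the operations $g^i$ and $\alpha_f^A$ act on the diagram constants exactly as the corresponding operations act on the carrier of $M$; and the single extra axiom $\top \vdash \mathsf{x}_{C^j}\downarrow$ asserting that the new indeterminate $\mathsf{x}_{C^j}$ is defined.

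For the first two groups the check is immediate. By Definition \ref{sigmorphismsigma}, $\sigma_f^C$ restricts to the identity inclusion $\SigmaJ(M)\hookrightarrow\SigmaJ(M,\mathsf{x}_{C^i})$, so the $\sigma_f^C$-translation of any such axiom is literally that same axiom, now read as a sequent over the larger signature. Since $\TJ(M,\mathsf{x}_{C^i})$ contains every axiom of $\TJ(M)$ by construction, each translated sequent is provable — indeed an axiom — in $\TJ(M,\mathsf{x}_{C^i})$. The only remaining axiom is $\top \vdash \mathsf{x}_{C^j}\downarrow$; because $\sigma_f^C(\mathsf{x}_{C^j}) \equiv \alpha_f^C(\mathsf{x}_{C^i})$, its translation is $\top \vdash \alpha_f^C(\mathsf{x}_{C^i})\downarrow$. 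Now $\TJ(M,\mathsf{x}_{C^i})$ proves $\mathsf{x}_{C^i}\downarrow$ (its own defining extra axiom) and, extending $\TJ$, it also proves the instance of Axiom \ref{thefunctortheory}.\ref{alphasaretotal} giving $\top\vdash^{x:C^i}\alpha_f^C(x)\downarrow$; substituting the defined term $\mathsf{x}_{C^i}$ for $x$ — a legitimate inference of partial Horn logic precisely because $\mathsf{x}_{C^i}$ is defined — yields $\TJ(M,\mathsf{x}_{C^i})\vdash \alpha_f^C(\mathsf{x}_{C^i})\downarrow$, as needed.

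There is no genuine obstacle here: the entire content is captured by the totality Axiom \ref{thefunctortheory}.\ref{alphasaretotal} together with the definedness of $\mathsf{x}_{C^i}$ in the codomain theory, which is exactly the remark already flagged just before the statement. The only thing requiring care is correctly enumerating the axioms of the two diagram theories $\TJ(M,\mathsf{x}_{C^j})$ and $\TJ(M,\mathsf{x}_{C^i})$ and noting that $\sigma_f^C$ is the inclusion away from the single new constant, so that all the ``old'' axioms translate to themselves; the argument is then a short bookkeeping exercise rather than a substantive proof, which is why it is relegated to a one-line justification.
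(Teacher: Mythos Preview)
Your proposal is correct and follows exactly the approach the paper takes: the paper's entire justification is the one-line remark ``Since $\TJ\left(M, \mathsf{x}_{C^i}\right) \vdash \alpha_f^C(\mathsf{x}_{C^i}) \downarrow$, we then easily obtain [the lemma],'' and you have simply spelled out the bookkeeping behind that remark --- that $\sigma_f^C$ is the identity on $\SigmaJ(M)$ and hence preserves all $\TJ(M)$-axioms verbatim, while the only nontrivial axiom to check is $\top \vdash \mathsf{x}_{C^j}\downarrow$, whose translation is handled precisely by Axiom \ref{thefunctortheory}.\ref{alphasaretotal} together with $\mathsf{x}_{C^i}\downarrow$.
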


\noindent If $f : i \to j$ is a morphism in $\J$, let us write $f^M := F^M(f) : M^i \to M^j$ (see the proof of Proposition \ref{modelsarefunctors}). Then we have \[ f^M = F^M(f) = \left(\left(\alpha_f^A\right)^M : M^i_A \to M^j_A\right)_{A \in \Sigma}. \] Recall that for any $C \in \Sigma_{\mathsf{Sort}}$, the $\Sigma$-morphism $f^M : M^i \to M^j$ induces a canonical signature morphism $\rho_{f^M}^C : \Sigma\left(M^i, \mathsf{x}_C\right) \to \Sigma\left(M^j, \mathsf{x}_C\right)$ by \cite[Definition 2.2.17]{thesis} which is also a theory morphism $\T\left(M^i, \mathsf{x}_C\right) \to \T\left(M^j, \mathsf{x}_C\right)$ by \cite[Lemma 2.2.18]{thesis}.

\begin{defn}
\label{sigmorphismtau}
{\em For any $M \in \PTJmod$, any morphism $f : i \to j$ in $\J$, and any $C \in \Sigma_{\mathsf{Sort}}$, we define a signature morphism $\tau_{f}^C : \Sigma\left(M^i, \mathsf{x}_C\right) \to \SigmaJ\left(M, \mathsf{x}_{C^i}\right)$ as the composite
\[ \Sigma\left(M^i, \mathsf{x}_C\right) \xrightarrow{\rho_{f^M}^C} \Sigma(M^j, \mathsf{x}_C) \xrightarrow{\rho_{M^j}^C} \SigmaJ(M, \mathsf{x}_{C^j}) \xrightarrow{\sigma_{f}^C} \SigmaJ(M, \mathsf{x}_{C^i}). \]
Explicitly, $\tau_{f}^C$ is defined as follows:

\begin{itemize}

\item When restricted to $\Sigma \subseteq \Sigma\left(M^i, \mathsf{x}_C\right)$, $\tau_{f}^C$ agrees with $\rho^j : \Sigma \to \SigmaJ$. 

\item For any $s \in M^i_A = M_{A^i}$ (for any $A \in \Sigma_{\mathsf{Sort}}$), we have $\tau_{f}^C\left(c_{A, s}^{M^i}\right) \equiv c_{A^j, \left(\alpha_f^A\right)^M(s)}^M$. 

\item We have $\tau_{f}^C(\mathsf{x}_C) \equiv \alpha_f^C(\mathsf{x}_{C^i})$. \qed
\end{itemize}
}
\end{defn}

\noindent We will then need the following technical lemma about the signature morphism $\tau_f^C$, whose proof may be found in \cite[Lemma 5.1.19]{thesis}:

\begin{lem}
\label{taulemma}
Let $M \in \PTJmod$, let $f : i \to j$ be any morphism in $\J$, and let $C \in \Sigma_{\mathsf{Sort}}$. Then for any term $u \in \mathsf{Term}^c\left(\Sigma\left(M^i, \mathsf{x}_C\right)\right)$ with $\T\left(M^i, \mathsf{x}_C\right) \vdash u \downarrow$ and $u : A$, we have $\TJ\left(M, \mathsf{x}_{C^i}\right) \vdash \tau_{f}^C(u) = \alpha_f^A\left(\rho_{M^i}^C(u)\right)$. \qed
\end{lem}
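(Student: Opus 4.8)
The plan is to proceed by induction on the structure of the term $u \in \mathsf{Term}^c(\Sigma(M^i, \mathsf{x}_C))$ satisfying $\T(M^i, \mathsf{x}_C) \vdash u \downarrow$, using the explicit description of $\tau_f^C$ given just before the statement. There are essentially three base/inductive cases to handle, corresponding to the three bullet points describing $\tau_f^C$. First, if $u \equiv \mathsf{x}_C$, then by definition $\tau_f^C(\mathsf{x}_C) \equiv \alpha_f^C(\mathsf{x}_{C^i})$, and since $\rho_{M^i}^C(\mathsf{x}_C) = \mathsf{x}_{C^i}$ (Definition \ref{sigmorphismofM^i}), the desired equation $\TJ(M, \mathsf{x}_{C^i}) \vdash \tau_f^C(\mathsf{x}_C) = \alpha_f^C(\rho_{M^i}^C(\mathsf{x}_C))$ holds syntactically (both sides are literally the term $\alpha_f^C(\mathsf{x}_{C^i})$, so it follows from reflexivity of equality on the variable $\mathsf{x}_{C^i}$, noting $\TJ(M,\mathsf{x}_{C^i}) \vdash \mathsf{x}_{C^i} \downarrow$).

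Second, if $u \equiv c_{A,s}^{M^i}$ is one of the diagram constants, i.e. $s \in M^i_A = M_{A^i}$, then by definition $\tau_f^C(c_{A,s}^{M^i}) \equiv c_{A^j, (\alpha_f^A)^M(s)}^M$, while $\rho_{M^i}^C(c_{A,s}^{M^i}) = c_{A^i, s}^M$, so we must show $\TJ(M, \mathsf{x}_{C^i}) \vdash c_{A^j, (\alpha_f^A)^M(s)}^M = \alpha_f^A(c_{A^i, s}^M)$. This is exactly an instance of the diagram axioms of $\TJ(M)$ asserting that the operation $\alpha_f^A$ sends the constant naming $s$ to the constant naming $(\alpha_f^A)^M(s)$ — these axioms are part of the definition of the diagram theory $\T^\J(M)$ (cf. the description of diagram theories recalled before Definition \ref{commutesgenericallydefn}, applied to $\TJ$), so the claim is immediate.

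Third, for the inductive step, suppose $u \equiv g^{i'}(u_1, \ldots, u_n)$ for some $\Sigma^\J$-function-symbol; but actually, since $\tau_f^C$ restricted to $\Sigma$ agrees with $\rho^j$, we only need to handle the case where $u$ is built from a function symbol $g : A_1 \times \cdots \times A_n \to A$ of $\Sigma$ applied inside $\Sigma(M^i,\mathsf{x}_C)$, which $\tau_f^C$ sends to $g^j(\tau_f^C(u_1), \ldots, \tau_f^C(u_n))$ (because $\rho^j(g) = g^j$). By the definedness hypothesis on $u$ and the definition of the interpretation of compound terms, each $u_\ell$ also satisfies $\T(M^i,\mathsf{x}_C) \vdash u_\ell \downarrow$, so the inductive hypothesis gives $\TJ(M,\mathsf{x}_{C^i}) \vdash \tau_f^C(u_\ell) = \alpha_f^{A_\ell}(\rho_{M^i}^C(u_\ell))$ for each $\ell$. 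Substituting these equalities into $g^j(-)$ (using that $\TJ$-provability is closed under congruence and that $\tau_f^C(u) \downarrow$, which follows since $\tau_f^C$ is a theory morphism by composing Lemmas \ref{theorymorphismofM^i}, \ref{theorymorphismsigma} and \cite[Lemma 2.2.18]{thesis}), we get $\TJ(M,\mathsf{x}_{C^i}) \vdash \tau_f^C(u) = g^j(\alpha_f^{A_1}(\rho_{M^i}^C(u_1)), \ldots, \alpha_f^{A_n}(\rho_{M^i}^C(u_n)))$. Now apply Axiom \ref{thefunctortheory}.\ref{alphasarehoms} — which says $\alpha_f^A$ commutes with the operations $g^i$ — in the theory $\TJ(M,\mathsf{x}_{C^i})$: this rewrites the right-hand side as $\alpha_f^A(g^i(\rho_{M^i}^C(u_1), \ldots, \rho_{M^i}^C(u_n))) = \alpha_f^A(\rho_{M^i}^C(g^i(u_1,\ldots,u_n))) = \alpha_f^A(\rho_{M^i}^C(u))$, where the middle equality is because $\rho_{M^i}^C$ agrees with $\rho^i$ on $\Sigma$ and $\rho^i(g) = g^i$. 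This closes the induction.

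The only mild subtlety — and the step I would be most careful about — is the application of Axiom \ref{thefunctortheory}.\ref{alphasarehoms}, since that axiom has the hypothesis $g^i(\mathsf{x}_1, \ldots, \mathsf{x}_n)\downarrow$: one must verify that $\TJ(M,\mathsf{x}_{C^i})$ proves the definedness of $g^i(\rho_{M^i}^C(u_1), \ldots, \rho_{M^i}^C(u_n))$, which follows from $\T(M^i,\mathsf{x}_C) \vdash g(u_1,\ldots,u_n)\downarrow$ via the theory morphism $\rho_{M^i}^C$ (Lemma \ref{theorymorphismofM^i}), since theory morphisms preserve provability. Everything else is a routine unwinding of the definition of $\tau_f^C$ together with standard closure properties of partial Horn logic.
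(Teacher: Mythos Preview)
Your proof is correct and follows the natural approach --- induction on the structure of the closed term $u$ --- which is essentially the argument given in \cite[Lemma 5.1.19]{thesis}. The three cases (indeterminate, diagram constant, compound term built from a $\Sigma$-operation) are exactly the ones that arise, and your handling of the definedness hypothesis in Axiom \ref{thefunctortheory}.\ref{alphasarehoms} via the theory morphism $\rho_{M^i}^C$ is the right justification; the only cosmetic slip is that in the final chain you wrote $\rho_{M^i}^C(g^i(u_1,\ldots,u_n))$ where you meant $\rho_{M^i}^C(g(u_1,\ldots,u_n))$ (the symbol $g^i$ lives in $\Sigma^\J$, not in $\Sigma(M^i,\mathsf{x}_C)$), but the intended identity $\rho_{M^i}^C(g(u_1,\ldots,u_n)) \equiv g^i(\rho_{M^i}^C(u_1),\ldots,\rho_{M^i}^C(u_n))$ is exactly what you use.
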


\noindent We can now prove:

\begin{prop}
\label{grouphomexists}
For any $M \in \PTJmod$, there is a group homomorphism 
\[ \beta_M : \left(\prod_{i \in \J} G_{\T}\left(M^i\right)\right)^\J \times \mathsf{Aut}(\mathsf{Id}_\J)^M \to G_{\TJ}(M). \]
\end{prop}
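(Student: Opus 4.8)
The plan is to build $\beta_M$ componentwise, sending a pair $\bigl((([s_C^i])_C)_i, \psi\bigr)$ in the domain to a candidate element $([t_{D}])_{D \in \SigmaJ_\Sort}$ of $\prod_{D \in \SigmaJ_\Sort} M\la \x_D \ra_D$, and then verifying that this candidate actually lies in $G_{\TJ}(M)$, i.e. that it is substitutionally invertible and commutes generically with and reflects definedness of every function symbol of $\SigmaJ$ (namely the $g^i$ for $g \in \Sigma_\Fun$, $i \in \ob\J$, and the $\alpha_f^A$ for $f \in \mor\J$, $A \in \Sigma_\Sort$). Concretely, for a sort $D = C^i$ of $\SigmaJ$, I would define the $C^i$-component of $\beta_M\bigl((([s_C^i])_C)_i,\psi\bigr)$ to be $\rho_{M^i}^C(s_C^i)$ if $i \in \ob\J_C^M$ (using the theory morphism $\rho_{M^i}^C : \T(M^i,\x_C) \to \TJ(M,\x_{C^i})$ of Lemma \ref{theorymorphismofM^i} to transport the component $[s_C^i] \in M^i\la\x_C\ra_C$ into $M\la\x_{C^i}\ra_{C^i}$), suitably twisted by the automorphism $\psi_C(i) : i \xrightarrow{\sim} i$ of $\J$, i.e. by the operation $\alpha_{\psi_C(i)}^C$; and to be simply $[\x_{C^i}]$ (the trivial/identity component) when $i \notin \ob\J_C^M$, i.e. when $\T(M^i)$ is trivial for the sort $C$, in which case $M\la\x_{C^i}\ra_{C^i}$ is a one-element set anyway so there is no choice.

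The verification then breaks into two families of checks. For the operations $g^i$: substitutional invertibility, generic commutation, and reflection of definedness of the $C$-indexed family $([s_C^i])_C$ with respect to $g$ in $\T(M^i,\x_1,\ldots,\x_n)$ transport, via the theory morphism $\rho_{M^i}^\bullet$ together with Lemma \ref{theorymorphismofM^isubst} (which says $\rho_{M^i}^C$ commutes with substitution into $\x_C$), into the corresponding statements about $g^i$ in $\TJ(M,\x_{1^i},\ldots,\x_{n^i})$; the automorphism twist by $\psi$ causes no problem here because the $\psi_{A_m}(i)$ for the input sorts and $\psi_A(i)$ for the output sort agree whenever $g^{M^i}$ is non-degenerate in the relevant position, which is exactly the condition built into the definition of $\Aut(\Id_\J)^M$ (and when $g^{M^i}$ is degenerate in position $m$, the $m$th input is irrelevant so the mismatch is harmless). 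For the operations $\alpha_f^A$ with $f : i \to j$: here one must check that $\alpha_f^A$ takes the $A^i$-component to the $A^j$-component correctly, which is precisely where Lemma \ref{taulemma} is used — it identifies $\tau_f^C(u)$ with $\alpha_f^A(\rho_{M^i}^C(u))$ modulo $\TJ(M,\x_{C^i})$ — together with the "limit" condition $F^M(f)(\cdot) = \cdot$ defining $\bigl(\prod_i G_\T(M^i)\bigr)^\J$ (Lemma \ref{grouptheorylemma}) and the naturality of each $\psi_C$ in $\J_C^M$, which gives $f \circ \psi_C(i) = \psi_C(j) \circ f$ and hence compatibility of the twists. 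Reflection of definedness and invertibility for the $\alpha_f^A$ are straightforward since the axioms \ref{thefunctortheory}.\ref{alphasaretotal}, \ref{thefunctortheory}.\ref{alphaidworks}, \ref{thefunctortheory}.\ref{alphacompworks} make each $\alpha_f^A$ total with $\alpha_{f^{-1}}^A$-style inverses when $f$ is invertible — but note $f$ need not be invertible in $\J$, so one instead uses that $\alpha_f^A$ is total (hence trivially reflects definedness in the relevant sense) and checks the defining equation of $G_{\TJ}$ directly.

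Finally I would check that $\beta_M$ is a group homomorphism: the identity of the domain (the tuple of $[\x_C^i]$'s paired with the identity natural transformation) maps to $([\x_{C^i}])_{C^i}$, the identity of $G_{\TJ}(M)$; and the product in $G_{\TJ}(M)$, computed by substitution as $([t_D[t_D'/\x_D]])_D$, matches the image of the product in the domain because $\rho_{M^i}^C$ commutes with substitution (Lemma \ref{theorymorphismofM^isubst}) and because composition of the $\psi$-twists corresponds to composition in $\Aut(\Id_\J)^M$ — this is a bookkeeping check using that the $\alpha_{\psi_C(i)}^C$'s compose according to axiom \ref{thefunctortheory}.\ref{alphacompworks}. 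The main obstacle I anticipate is getting the $\alpha_f^A$-compatibility check exactly right: one has to juggle three signature morphisms ($\rho_{f^M}^C$, $\rho_{M^j}^C$, $\sigma_f^C$, composing to $\tau_f^C$) simultaneously with the $\psi$-twist, and it is here that the somewhat unintuitive definition of $\Aut(\Id_\J)^M$ — in particular the passage to the subcategories $\J_B^M$ and the non-degeneracy condition — is forced, so the delicate point is to see that those conditions are not merely sufficient but are used in an essential way to land inside $G_{\TJ}(M)$.
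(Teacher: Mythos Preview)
Your proposal is correct and follows essentially the same approach as the paper. The paper defines $\beta_M(\gamma,\psi)_{C^i}$ exactly as you describe---as $\left[\rho_{M^i}^C(s_C^i)\left[\alpha_{\psi_C(i)}^C(\x_{C^i})/\x_{C^i}\right]\right]$ when $\T(M^i)$ is non-trivial for $C$ and $[\x_{C^i}]$ otherwise---and then verifies membership in $G_{\TJ}(M)$ via three claims (invertibility, generic commutation, reflection of definedness), using for the $g^i$ the transport along $\rho_{M^i}^{\vec{A}}$ together with the degeneracy/non-degeneracy dichotomy encoded in $\Aut(\Id_\J)^M$, and for the $\alpha_f^B$ the limit condition on $(\gamma_i)_i$, Lemma~\ref{taulemma}, and the naturality of $\psi_B$, exactly as you outline. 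One small clarification: substitutional invertibility is a property of the whole tuple rather than something checked per operation symbol, so there is no separate ``invertibility for the $\alpha_f^A$'' check; the paper handles invertibility in a single claim by exhibiting the explicit inverse $\left[\alpha_{\psi_C(i)^{-1}}^C\left(\rho_{M^i}^C\left((s_C^i)^{-1}\right)\right)\right]$.
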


\begin{proof}
Let $\gamma = (\gamma_i)_i \in \left(\prod_i G_{\T}\left(M^i\right)\right)^\J$ and $\psi = (\psi_B)_{B \in \Sigma} \in \mathsf{Aut}(\mathsf{Id}_\J)^M$. We must define $\beta_M(\gamma, \psi) \in G_{\TJ}(M)$, with $G_{\TJ}(M)$ being the group of all $\SigmaJ_{\mathsf{Sort}}$-indexed sequences $\left([t_{C^i}]\right)_{i, C} \in \prod_{i \in \J, C \in \Sigma} M\la \mathsf{x}_{C^i}\ra_{C^i}$ that are invertible, commute generically with all function symbols of $\SigmaJ$, and reflect definedness. Each $t_{C^i} \in \mathsf{Term}^c\left(\SigmaJ\left(M, \mathsf{x}_{C^i}\right)\right)$ is a closed term of sort $C^i$ with $\TJ\left(M, \mathsf{x}_{C^i}\right) \vdash t_{C^i} \downarrow$. 

So let $i \in \ob\J$ and $C \in \Sigma_{\mathsf{Sort}}$, and let us define $\beta_M(\gamma, \psi)_{C^i} \in M\la \mathsf{x}_{C^i} \ra_{C^i}$. 
Since $\gamma_i \in G_{\T}\left(M^i\right)$, we know that $\gamma_i^C = \left[s^i_C\right] \in M^i\la \mathsf{x}_C \ra_C$. So $s^i_C \in \mathsf{Term}^c\left(\Sigma\left(M^i, \mathsf{x}_C\right)\right)_C$ is a closed term of sort $C$ with $\T\left(M^i, \mathsf{x}_C\right) \vdash s_C^i \downarrow$. Then $\rho_{M^i}^C\left(s^i_C\right) \in \mathsf{Term}^c\left(\SigmaJ\left(M, \mathsf{x}_{C^i}\right)\right)_{C^i}$ and $\TJ\left(M, \mathsf{x}_{C^i}\right) \vdash \rho_{M^i}^C\left(s^i_C\right) \downarrow$, since $\rho_{M^i}^C : \T\left(M^i, \mathsf{x}_C\right) \to \TJ\left(M, \mathsf{x}_{C^i}\right)$ is a theory morphism by Lemma \ref{theorymorphismofM^i}. 

Suppose first that $\T\left(M^i\right)$ is \emph{non-trivial} for the sort $C$. Then $i \in \ob\J^M_C$ and $\psi_C(i) : i \xrightarrow{\sim} i$ is an isomorphism in $\J$. So then $\alpha_{\psi_C(i)}^C : C^i \to C^i$ is a function symbol of $\SigmaJ$ and $\TJ\left(M, \mathsf{x}_{C^i}\right) \vdash \alpha_{\psi_C(i)}^C(\mathsf{x}_{C^i}) \downarrow$, and it then follows by \cite[Lemma 2.2.24]{thesis} that $\TJ\left(M, \mathsf{x}_{C^i}\right) \vdash \rho_{M^i}^C\left(s_{C}^i\right)\left[\alpha_{\psi_C(i)}^C(\mathsf{x}_{C^i})/\mathsf{x}_{C^i}\right] \downarrow$. So then $\left[\rho_{M^i}^C\left(s_{C}^i\right)\left[\alpha_{\psi_C(i)}^C(\mathsf{x}_{C^i})/\mathsf{x}_{C^i}\right]\right] \in M\la \mathsf{x}_{C^i} \ra_{C^i}$, and we therefore set \[ \beta_M(\gamma, \psi)_{C^i} := \left[\rho_{M^i}^C\left(s_{C}^i\right)\left[\alpha_{\psi_C(i)}^C(\mathsf{x}_{C^i})/\mathsf{x}_{C^i}\right]\right] \in M\la \mathsf{x}_{C^i} \ra_{C^i}. \] If $\T\left(M^i\right)$ is \emph{trivial} for the sort $C$, then we simply set $\beta_M(\gamma, \psi)_{C^i} := [\mathsf{x}_{C^i}] \in M\la \mathsf{x}_{C^i} \ra_{C^i}$.
It is then shown in the proof of \cite[Proposition 5.1.20]{thesis} that $\beta_M$ is well-defined. We now prove in a series of claims that $\beta_M(\gamma, \psi) \in G_{\TJ}(M)$.
\begin{claim}
$\beta_M(\gamma, \psi)$ is invertible.
\end{claim} 
\begin{proof}
Let $i \in \ob\J$ and $C \in \Sigma_{\mathsf{Sort}}$. The result is trivial to verify if $\T\left(M^i\right)$ is trivial for the sort $C$, so assume otherwise. Since $\gamma_i \in G_{\T}\left(M^i\right)$, there is some $\left[\left(s_C^i\right)^{-1}\right] \in M^i \la \mathsf{x}_C \ra_C$ with
\[ \left[s_C^i\left[\left(s_C^i\right)^{-1}/\mathsf{x}_C\right]\right] = [\mathsf{x}_C] = \left[\left(s_C^i\right)^{-1}\left[s_C^i/\mathsf{x}_C\right]\right] \in M^i\la \mathsf{x}_C \ra_C, \] i.e.
\[ \T\left(M^i, \mathsf{x}_C\right) \vdash s_C^i\left[\left(s_C^i\right)^{-1}/\mathsf{x}_C\right] = \mathsf{x}_C = \left(s_C^i\right)^{-1}\left[s_C^i/\mathsf{x}_C\right]. \] Now consider $\rho_{M^i}^C\left(\left(s_C^i\right)^{-1}\right) \in \mathsf{Term}^c\left(\SigmaJ\left(M, \mathsf{x}_{C^i}\right)\right)_{C^i}$: since $\T\left(M^i, \mathsf{x}_C\right) \vdash \left(s_C^i\right)^{-1} \downarrow$, it follows from Lemma \ref{theorymorphismofM^i} that $\TJ\left(M, \mathsf{x}_{C^i}\right) \vdash \rho_{M^i}^C\left(\left(s_C^i\right)^{-1}\right) \downarrow$. Then because $\alpha_{\psi_C(i)^{-1}}^C : C^i \to C^i$ is provably total in $\TJ$, we obtain $\TJ\left(M, \mathsf{x}_{C^i}\right) \vdash \alpha_{\psi_C(i)^{-1}}^C\left(\rho_{M^i}^C\left(\left(s_C^i\right)^{-1}\right)\right) \downarrow$, so that \[ \left[\alpha_{\psi_C(i)^{-1}}^C\left(\rho_{M^i}^C\left(\left(s_C^i\right)^{-1}\right)\right)\right] \in M\la \mathsf{x}_{C^i} \ra_{C^i}. \]
So we set 
\[ \beta_M(\gamma, \psi)_{C^i}^{-1} := \left[\alpha_{\psi_C(i)^{-1}}^C\left(\rho_{M^i}^C\left(\left(s_C^i\right)^{-1}\right)\right)\right], \] and it is now straightforward to show (as in the proof of \cite[Proposition 5.1.20]{thesis}) that this is a substitutional inverse of $\beta_M(\gamma, \psi)_{C^i}$, which proves that $\beta_M(\gamma, \psi)$ is invertible. 
\end{proof}

\begin{claim}
$\beta_M(\gamma, \psi)$ commutes generically with all function symbols of $\SigmaJ$. 
\end{claim}
\begin{proof}
First let $i \in \ob\J$ and let $g : A_1 \times \ldots \times A_n \to A$ be a function symbol of $\Sigma$. We must show that $\beta_M(\gamma, \psi)$ commutes generically with the function symbol $g^i : A_1^i \times \ldots \times A_n^i \to A^i$ of $\SigmaJ$. Assume without loss of generality that $\T\left(M^i\right)$ is non-trivial for each of the sorts $A_1, \ldots, A_n, A$; if this is \emph{not} the case, then the argument required is a simpler version of the one we are about to give. 

We must show that the sequent
\[ g^i\left(\mathsf{x}_{A_1^i}, \ldots, \mathsf{x}_{A_n^i}\right) \downarrow \ \ \vdash \left\{\rho_{M^i}^{A}\left(s_A^i\right)\left[\alpha_{\psi_A(i)}^A(\mathsf{x}_{A^i})/\mathsf{x}_{A^i}\right]\right\} \left[g^i\left(\mathsf{x}_{A_1^i}, \ldots, \mathsf{x}_{A_n^i}\right)/\mathsf{x}_{A^i}\right] \] \[ = g^i\left(\rho_{M^i}^{A_1}\left(s_{A_1}^i\right)\left[\alpha_{\psi_{A_1}(i)}^{A_1}\left(\mathsf{x}_{A^i_1}\right)/\mathsf{x}_{A^i_1}\right], \ldots, \rho_{M^i}^{A_n}\left(s_{A_n}^i\right)\left[\alpha_{\psi_{A_n}(i)}^{A_n}\left(\mathsf{x}_{A^i_n}\right)/\mathsf{x}_{A^i_n}\right]\right) \] 
is provable in the theory $\TJ\left(M, \mathsf{x}_{A_1^i}, \ldots, \mathsf{x}_{A_n^i}\right)$ (technically, we need to ensure that the indeterminates on the right side of the equation are pairwise distinct (see Definition \ref{commutesgenericallydefn}), but we will ignore this subtlety here and elsewhere in the proof to increase readability). Since $\gamma_i \in G_{\T}\left(M^i\right)$, we know that the sequent
\[ g\left(\mathsf{x}_{A_1}, \ldots, \mathsf{x}_{A_n}\right) \downarrow \ \vdash s_A^i\left[g\left(\mathsf{x}_{A_1}, \ldots, \mathsf{x}_{A_n}\right)/\mathsf{x}_A\right] = g\left(s_{A_1}^i, \ldots, s_{A_n}^i\right) \tag{$*$} \] is provable in the theory $\T\left(M^i, \mathsf{x}_{A_1}, \ldots, \mathsf{x}_{A_n}\right)$. As in Definition \ref{sigmorphismofM^i} and Lemma \ref{theorymorphismofM^i}, we can define a signature morphism $\rho_{M^i}^{\vec{A}} : \Sigma\left(M^i, \mathsf{x}_{A_1}, \ldots, \mathsf{x}_{A_n}\right) \to \SigmaJ\left(M, \mathsf{x}_{A_1^i}, \ldots, \mathsf{x}_{A_n^i}\right)$ that will be a theory morphism \[ \rho_{M^i}^{\vec{A}} : \T\left(M^i, \mathsf{x}_{A_1}, \ldots, \mathsf{x}_{A_n}\right) \to \TJ\left(M, \mathsf{x}_{A_1^i}, \ldots, \mathsf{x}_{A_n^i}\right); \] on $\Sigma\left(M^i\right)$, we define $\rho_{M^i}^{\vec{A}}$ as in Definition \ref{sigmorphismofM^i}, and for any $1 \leq j \leq n$ we set $\rho_{M^i}^{\vec{A}}(\mathsf{x}_{A_j}) := \mathsf{x}_{A_j^i}$ (here $\vec{A} = A_1, \ldots, A_n$). Then it is obvious that for any $1 \leq j \leq n$, the signature morphism $\rho_{M^i}^{\vec{A}}$ agrees with the signature morphism $\rho_{M^i}^{A_j} : \Sigma\left(M^i, \mathsf{x}_{A_j}\right) \to \SigmaJ\left(M, \mathsf{x}_{A_j^i}\right)$ when restricted to $\Sigma\left(M^i, \mathsf{x}_{A_j}\right)$, which implies that $\rho_{M^i}^{\vec{A}}\left(s_{A_j}^i\right) \equiv \rho_{M^i}^{A_j}\left(s_{A_j}^i\right)$ for all $1 \leq j \leq n$. Also (by Lemma \ref{theorymorphismofM^isubst}), we have
\[ \rho_{M^i}^{A}\left(s_A^i\right)\left[g^i\left(\mathsf{x}_{A_1^i}, \ldots, \mathsf{x}_{A_n^i}\right)/\mathsf{x}_{A^i}\right] \equiv \rho_{M^i}^{\vec{A}}\left(s_A^i\left[g(\mathsf{x}_{A_1}, \ldots, \mathsf{x}_{A_n})/\mathsf{x}_A\right]\right). \] Now, since $\rho_{M^i}^{\vec{A}} : \T\left(M^i, \mathsf{x}_{A_1}, \ldots, \mathsf{x}_{A_n}\right) \to \TJ\left(M, \mathsf{x}_{A_1^i}, \ldots, \mathsf{x}_{A_n^i}\right)$ is a theory morphism, it follows that the $\rho_{M^i}^{\vec{A}}$-translation of the aforementioned sequent ($*$) provable in $\T\left(M^i, \mathsf{x}_{A_1}, \ldots, \mathsf{x}_{A_n}\right)$ will be provable in $\TJ\left(M, \mathsf{x}_{A_1^i}, \ldots, \mathsf{x}_{A_n^i}\right)$. In other words, the following sequent is provable in $\TJ\left(M, \mathsf{x}_{A_1^i}, \ldots, \mathsf{x}_{A_n^i}\right)$:
\[ g^i\left(\mathsf{x}_{A_1^i}, \ldots, \mathsf{x}_{A_n^i}\right) \downarrow \ \ \vdash \rho_{M^i}^{A}\left(s_A^i\right)\left[g^i\left(\mathsf{x}_{A_1^i}, \ldots, \mathsf{x}_{A_n^i}\right)/\mathsf{x}_{A^i}\right] = g^i\left(\rho_{M^i}^{A_1}\left(s_{A_1}^i\right), \ldots, \rho_{M^i}^{A_n}\left(s_{A_n}^i\right)\right). \]
Now, let us reason in the theory $\TJ\left(M, \mathsf{x}_{A_1^i}, \ldots, \mathsf{x}_{A_n^i}\right) \cup \left\{ \top \vdash g^i\left(\mathsf{x}_{A_1^i}, \ldots, \mathsf{x}_{A_n^i}\right) \downarrow\right\}$ (referred to as the `expanded theory' for the rest of this argument), one of whose theorems is therefore the preceding equation. By substituting $\alpha_{\psi_A(i)}^{A_1}\left(\mathsf{x}_{A^i_1}\right)$ for $\mathsf{x}_{A^i_1}$, $\ldots$, $\alpha_{\psi_A(i)}^{A_n}\left(\mathsf{x}_{A^i_n}\right)$ for $\mathsf{x}_{A^i_n}$, the following equation is then provable in the expanded theory: 
\[ \rho_{M^i}^A\left(s_A^i\right)\left[g^i\left(\alpha_{\psi_A(i)}^{A_1}\left(\mathsf{x}_{A^i_1}\right), \ldots, \alpha_{\psi_A(i)}^{A_n}\left(\mathsf{x}_{A^i_n}\right)\right)/\mathsf{x}_{A^i}\right] \] \[ = g^i\left(\rho_{M^i}^{A_1}\left(s_{A_1}^i\right)\left[\alpha_{\psi_{A}(i)}^{A_1}\left(\mathsf{x}_{A^i_1}\right)/\mathsf{x}_{A^i_1}\right], \ldots, \rho_{M^i}^{A_n}\left(s_{A_n}^i\right)\left[\alpha_{\psi_{A}(i)}^{A_n}\left(\mathsf{x}_{A^i_n}\right)/\mathsf{x}_{A^i_n}\right]\right). \] 
Since the expanded theory (because of Axiom \ref{thefunctortheory}.\ref{alphasarehoms}) proves the equation
\[ g^i\left(\alpha_{\psi_A(i)}^{A_1}\left(\mathsf{x}_{A^i_1}\right), \ldots, \alpha_{\psi_A(i)}^{A_n}\left(\mathsf{x}_{A^i_n}\right)\right) = \alpha_{\psi_A(i)}^A\left(g^i\left(\mathsf{x}_{A^i_1}, \ldots, \mathsf{x}_{A^i_n}\right)\right), \] it follows that the expanded theory proves the equation
\[ \rho_{M^i}^A\left(s_A^i\right)\left[\alpha_{\psi_A(i)}^A\left(g^i\left(\mathsf{x}_{A^i_1}, \ldots, \mathsf{x}_{A^i_n}\right)\right)/\mathsf{x}_{A^i}\right] = g^i\left(\rho_{M^i}^{A_1}\left(s_{A_1}^i\right)\left[\alpha_{\psi_{A}(i)}^{A_1}\left(\mathsf{x}_{A^i_1}\right)/\mathsf{x}_{A^i_1}\right], \ldots, \rho_{M^i}^{A_n}\left(s_{A_n}^i\right)\left[\alpha_{\psi_{A}(i)}^{A_n}\left(\mathsf{x}_{A^i_n}\right)/\mathsf{x}_{A^i_n}\right]\right), \] i.e. the expanded theory proves the equation 
\[ \left(\rho_{M^i}^{A}\left(s_A^i\right)\left[\alpha_{\psi_A(i)}^A\left(\mathsf{x}_{A^i}\right)/\mathsf{x}_{A^i}\right]\right) \left[g^i\left(\mathsf{x}_{A_1^i}, \ldots, \mathsf{x}_{A_n^i}\right)/\mathsf{x}_{A^i}\right] \] \[ = g^i\left(\rho_{M^i}^{A_1}\left(s_{A_1}^i\right)\left[\alpha_{\psi_{A}(i)}^{A_1}\left(\mathsf{x}_{A^i_1}\right)/\mathsf{x}_{A^i_1}\right], \ldots, \rho_{M^i}^{A_n}\left(s_{A_n}^i\right)\left[\alpha_{\psi_{A}(i)}^{A_n}\left(\mathsf{x}_{A^i_n}\right)/\mathsf{x}_{A^i_n}\right]\right). \] So to complete the argument, it remains to show (by the deduction theorem in \cite[Theorem 10]{Horn}) that the expanded theory proves the equation
\[ g^i\left(\rho_{M^i}^{A_1}\left(s_{A_1}^i\right)\left[\alpha_{\psi_{A_1}(i)}^{A_1}\left(\mathsf{x}_{A^i_1}\right)/\mathsf{x}_{A^i_1}\right], \ldots, \rho_{M^i}^{A_n}\left(s_{A_n}^i\right)\left[\alpha_{\psi_{A_n}(i)}^{A_n}\left(\mathsf{x}_{A^i_n}\right)/\mathsf{x}_{A^i_n}\right]\right) \] \[ = g^i\left(\rho_{M^i}^{A_1}\left(s_{A_1}^i\right)\left[\alpha_{\psi_{A}(i)}^{A_1}\left(\mathsf{x}_{A^i_1}\right)/\mathsf{x}_{A^i_1}\right], \ldots, \rho_{M^i}^{A_n}\left(s_{A_n}^i\right)\left[\alpha_{\psi_{A}(i)}^{A_n}\left(\mathsf{x}_{A^i_n}\right)/\mathsf{x}_{A^i_n}\right]\right) \] (the difference in the two terms being the $\psi$-subscripts). It suffices to show that for any position $1 \leq m \leq n$, we can `swap' $\rho_{M^i}^{A_m}\left(s_{A_m}^i\right)\left[\alpha_{\psi_{A_m}(i)}^{A_m}\left(\mathsf{x}_{A^i_m}\right)\right]$ for $\rho_{M^i}^{A_m}\left(s_{A_m}^i\right)\left[\alpha_{\psi_A(i)}^{A_m}\left(\mathsf{x}_{A^i_m}\right)\right]$ within position $m$ in $g^i$ (modulo the expanded theory). If $g^{M^i}$ is degenerate in position $m$, then this easily follows by the definition of `degenerate' (see Definition \ref{degenerate}): specifically, if $\T\left(M^i\right)$ proves the equation in Definition \ref{degenerate} for $g$, then it follows from Lemma \ref{theorymorphismofM^i} that $\TJ(M)$ will prove the corresponding equation for $g^i$.

Otherwise, if $g^{M^i}$ is \emph{non-degenerate} in position $m$, then since $\psi \in \mathsf{Aut}(\mathsf{Id}_\J)^M$, it follows that $\psi_{A_m}(i) = \psi_A(i) : i \xrightarrow{\sim} i$, which again easily yields the desired result. This completes the proof that $\beta_M(\gamma, \psi)$ commutes generically with the function symbol $g^i$ of $\SigmaJ$. 

\bigskip Now let $B \in \Sigma_{\mathsf{Sort}}$ and let $f : i \to j$ be an arbitrary morphism in $\J$. We must show that $\beta_M(\gamma, \psi)$ commutes generically with the function symbol $\alpha_f^B : B^i \to B^j$ of $\SigmaJ$. Suppose first that both $\T\left(M^i\right)$ and $\T\left(M^j\right)$ are non-trivial for the sort $B$. Then we must show that the equation
\[ \left\{\rho_{M^j}^B\left(s_B^j\right)\left[\alpha_{\psi_B(j)}^B\left(\mathsf{x}_{B^j}\right)/\mathsf{x}_{B^j}\right]\right\}\left[\alpha_f^B\left(\mathsf{x}_{B^i}\right)/\mathsf{x}_{B^j}\right] = \alpha_f^B\left(\rho_{M^i}^B\left(s_B^i\right)\left[\alpha_{\psi_B(i)}^B\left(\mathsf{x}_{B^i}\right)/\mathsf{x}_{B^i}\right]\right) \] 
is provable in $\TJ\left(M, \mathsf{x}_{B^i}\right)$ (since $\alpha_f^B$ is provably total in $\TJ$). Since $\gamma \in \left(\prod_i G_{\T}\left(M^i\right)\right)^\J$, we know that $G_{\T}\left(F^M(f)\right)(\gamma_i) = \gamma_j$, i.e. 
\[ G_{\T}\left(F^M(f)\right)\left(\left(\left[s_C^i\right]\right)_{C \in \Sigma}\right) = \left(\left[s_C^j\right]\right)_{C \in \Sigma}. \] Recalling our earlier convention that $f^M := F^M(f) : M^i \to M^j$, this equality means that \[ \left(\left[\rho_{f^M}^C\left(s_C^i\right)\right]\right)_{C \in \Sigma} = \left(\left[s_C^j\right]\right)_{C \in \Sigma} \in G_{\T}\left(M^j\right) \] (see \cite[Definition 2.2.36]{thesis}). In particular, for our fixed sort $B$ we have $\left[\rho_{f^M}^B\left(s_B^i\right)\right] = \left[s_B^j\right] \in M^j\la \mathsf{x}_B \ra_B$, which means that $\T\left(M^j, \mathsf{x}_B\right) \vdash \rho_{f^M}^B\left(s_B^i\right) = s_B^j$. Since $\rho_{M^j}^B : \T\left(M^j, \mathsf{x}_B\right) \to \TJ\left(M, \mathsf{x}_{B^j}\right)$ is a theory morphism by Lemma \ref{theorymorphismofM^i}, we then have 
\[ \TJ\left(M, \mathsf{x}_{B^j}\right) \vdash \rho_{M^j}^B\left(\rho_{f^M}^B\left(s_B^i\right)\right) = \rho_{M^j}^B\left(s_B^j\right). \] And since $\sigma_{f}^B : \TJ\left(M, \mathsf{x}_{B^j}\right) \to \TJ\left(M, \mathsf{x}_{B^i}\right)$ is a theory morphism by Lemma \ref{theorymorphismsigma}, we obtain
\[ \TJ\left(M, \mathsf{x}_{B^i}\right) \vdash \sigma_{f}^B\left(\rho_{M^j}^B\left(\rho_{f^M}^B\left(s_B^i\right)\right)\right) = \sigma_{f}^B\left(\rho_{M^j}^B\left(s_B^j\right)\right), \] i.e. (see Definition \ref{sigmorphismtau}) \[ \TJ\left(M, \mathsf{x}_{B^i}\right) \vdash \tau_{f}^B\left(s_B^i\right) = \sigma_{f}^B\left(\rho_{M^j}^B\left(s_B^j\right)\right). \] Also, since $\sigma_{f}^B : \SigmaJ\left(M, \mathsf{x}_{B^j}\right) \to \SigmaJ\left(M, \mathsf{x}_{B^i}\right)$ is the identity except for the fact that $\sigma_{f}^B(\mathsf{x}_{B^j}) := \alpha_f^B(\mathsf{x}_{B^i})$, it easily follows that \[ \sigma_{f}^B\left(\rho_{M^j}^B\left(s_B^j\right)\right) \equiv \rho_{M^j}^B\left(s_B^j\right)\left[\alpha_f^B(\mathsf{x}_{B^i})/\mathsf{x}_{B^j}\right]. \] So we have \[ \TJ\left(M, \mathsf{x}_{B^i}\right) \vdash \tau_{f}^B\left(s_B^i\right) = \rho_{M^j}^B\left(s_B^j\right)\left[\alpha_f^B(\mathsf{x}_{B^i})/\mathsf{x}_{B^j}\right]. \] Finally, since $\T\left(M^i, \mathsf{x}_B\right) \vdash s_B^i \downarrow$, it follows from Lemma \ref{taulemma} that
\[ \TJ\left(M, \mathsf{x}_{B^i}\right) \vdash \tau_{f}^B\left(s_B^i\right) = \alpha_f^B\left(\rho_{M^i}^B\left(s_B^i\right)\right). \] Combining this equation with the previous one, we then have \[ \TJ\left(M, \mathsf{x}_{B^i}\right) \vdash \rho_{M^j}^B\left(s_B^j\right)\left[\alpha_f^B(\mathsf{x}_{B^i})/\mathsf{x}_{B^j}\right] = \alpha_f^B\left(\rho_{M^i}^B\left(s_B^i\right)\right). \] Substituting $\alpha_{\psi_B(i)}^B(\mathsf{x}_{B^i})$ for $\mathsf{x}_{B^i}$ and applying \cite[Lemma 2.2.24]{thesis}, $\TJ\left(M, \mathsf{x}_{B^i}\right)$ then proves the equation
\[ \rho_{M^j}^B\left(s_B^j\right)\left[\alpha_f^B\left(\alpha_{\psi_B(i)}^B(\mathsf{x}_{B^i})\right)/\mathsf{x}_{B^j}\right] = \alpha_f^B\left(\rho_{M^i}^B\left(s_B^i\right)\left[\alpha_{\psi_B(i)}^B(\mathsf{x}_{B^i})/\mathsf{x}_{B^i}\right]\right). \] So to complete the argument, it remains to prove that $\TJ\left(M, \mathsf{x}_{B^i}\right)$ proves the equation
\[ \rho_{M^j}^B\left(s_B^j\right)\left[\alpha_f^B\left(\alpha_{\psi_B(i)}^B(\mathsf{x}_{B^i})\right)/\mathsf{x}_{B^j}\right] = \left\{\rho_{M^j}^B\left(s_B^j\right)\left[\alpha_{\psi_B(j)}^B(\mathsf{x}_{B^j})/\mathsf{x}_{B^j}\right]\right\}[\alpha_f^B(\mathsf{x}_{B^i})/\mathsf{x}_{B^j}]. \] But the following sequence of equations is provable in $\TJ\left(M, \mathsf{x}_{B^i}\right)$, as desired:
\begin{align*}
&\ \ \ \left\{\rho_{M^j}^B\left(s_B^j\right)\left[\alpha_{\psi_B(j)}^B\left(\mathsf{x}_{B^j}\right)/\mathsf{x}_{B^j}\right]\right\}\left[\alpha_f^B\left(\mathsf{x}_{B^i}\right)/\mathsf{x}_{B^j}\right] \\
&\equiv \rho_{M^j}^B\left(s_B^j\right)\left[\alpha_{\psi_B(j)}^B\left(\alpha_f^B\left(\mathsf{x}_{B^i}\right)\right)/\mathsf{x}_{B^j}\right] \\
&= \rho_{M^j}^B\left(s_B^j\right)\left[\alpha_{\psi_B(j) \circ f}^B\left(\mathsf{x}_{B^i}\right)/\mathsf{x}_{B^j}\right] \\
&= \rho_{M^j}^B\left(s_B^j\right)\left[\alpha_{f \circ \psi_B(i)}^B\left(\mathsf{x}_{B^i}\right)/\mathsf{x}_{B^j}\right] \\
&= \rho_{M^j}^B\left(s_B^j\right)\left[\alpha_f^B\left(\alpha_{\psi_B(i)}^B\left(\mathsf{x}_{B^i}\right)\right)/\mathsf{x}_{B^j}\right];
\end{align*}
the second equality follows by Axiom \ref{thefunctortheory}.\ref{alphacompworks}, the third by naturality of $\psi_B \in \mathsf{Aut}\left(\mathsf{Id}_{\J^M_B}\right)$, and the last by Axiom \ref{thefunctortheory}.\ref{alphacompworks} again. 

Now suppose that $\T\left(M^i\right)$ is trivial for the sort $B$, which implies that $\T\left(M^i, \mathsf{x}_B\right)$ is also trivial for the sort $B$. Given the morphism $f : i \to j$, we have the induced $\Sigma$-morphism $f^M : M^i \to M^j$, which in turn induces the theory morphism $\rho_{f^M}^B : \T\left(M^i, \mathsf{x}_B\right) \to \T\left(M^j, \mathsf{x}_B\right)$, the existence of which implies that $\T\left(M^j, \mathsf{x}_B\right)$ is also trivial for the sort $B$. But by Lemma \ref{theorymorphismofM^i}, it then easily follows that $\TJ\left(M, \mathsf{x}_{B^j}\right)$ is trivial for the sort $B^j$, and hence will prove all equations between terms of this sort, which clearly yields the desired result. And if $\T\left(M^j\right)$ is trivial for the sort $B$, then $\T\left(M^j, \mathsf{x}_B\right)$ is trivial for the sort $B$, which then also yields the desired result, as just explained. This completes the proof that $\beta_M(\gamma, \psi)$ commutes generically with $\alpha_f^B$, which in turn completes the proof that $\beta_M(\gamma, \psi)$ commutes generically with all function symbols of $\SigmaJ$. 
\end{proof}

\begin{claim}
$\beta_M(\gamma, \psi)$ reflects definedness. 
\end{claim}
\begin{proof}
This can be proven analogously to the previous claim; we refer the reader to \cite[Claim 5.1.23]{thesis} for the details. 
\end{proof} 

\noindent With the preceding claims, we have now proved that \[ \beta_M : \left(\prod_i G_{\T}\left(M^i\right)\right)^\J \times \mathsf{Aut}(\mathsf{Id}_\J)^M \to G_{\TJ}(M) \] is a well-defined function. To complete the proof of Proposition \ref{grouphomexists}, it remains to show that $\beta_M$ preserves the group multiplication, which is not too difficult; we refer the reader to the proof of \cite[Proposition 5.1.20]{thesis} for this verification. 
\end{proof}

Our next step is to show that the group homomorphism $\beta_M$ is \emph{bijective}. To motivate our proof of this, let us assume for simplicity that $\T$ has just one sort $X$, so that $\Aut(\Id_\J)^M = \Aut\left(\Id_{\J_X^M}\right)$ for any $M \in \TJmod$. Assume also for simplicity that $M \in \TJmod$ is such that each $M^i \in \Tmod$ for $i \in \ob\J$ is non-trivial for the unique sort $X$, so that $\Aut\left(\Id_{\J_X^M}\right) = \Aut(\Id_\J)$ and the group homomorphism 
\[ \beta_M : \left(\prod_i G_{\T}\left(M^i\right)\right)^\J \times \Aut(\Id_\J) \to G_{\TJ}(M) \] is defined as in the proof of Proposition \ref{grouphomexists} by
\[ \left([s_i]_{i \in \ob\J}, \psi\right) \mapsto \left(\left[\rho_{M^i}\left(s_i\right)\left[\alpha_{\psi(i)}(\x_i)/\x_i\right]\right]\right)_{i \in \ob\J}, \] where we have suppressed the subscripts and superscripts for the unique sort $X$ of $\T$. To show that this assignment is bijective, we will essentially reason as follows. First, we show in Lemma \ref{mainalphalemma} that any closed term $u \in \Term^c\left(\SigmaJ(M, \x_i)\right)$ for $i \in \ob\J$ has a `normal form' $u'$ in which all function symbols of $\SigmaJ$ of the form $\alpha_f$ for $f \in \mor\J$ are `pushed inside as far as possible', and we call any term in this normal form an $\alpha$\emph{-restricted} term. In Definition \ref{theta} we then show that we can take any $\alpha$-restricted term $u \in \Term^c\left(\SigmaJ(M, \x_i)\right)$, replace any subterm in it of the form $\alpha_f(\x_i)$ by a new constant symbol $\x_f : X$, and thereby obtain an induced term $\theta(u)$ of the signature $\Sigma\left(M^i\right)$ augmented by these new constants $\x_f$ indexed by $\mor\J$. We then show in Proposition \ref{thetaprop} that this process preserves the provability of equations in $\TJ(M, \x_i)$. In Definition \ref{theta^*} we also show that we can erase the various morphism subscripts from these new constants $\x_f$ to obtain from $\theta(u)$ a term $\theta^*(u)$ over the more familiar signature $\Sigma\left(M^i, \x\right)$, and we show in Lemma \ref{theta^*lemma} that the mapping $\theta^*$ preserves the provability of a certain kind of sequent in $\TJ(M, \x_i)$, and in Lemma \ref{secondtheta^*lemma} that it preserves the provability of equations. After some further technical lemmas regarding $\theta$ and $\theta^*$, we finally prove in Proposition \ref{betaisinjective} that $\beta_M$ is injective. The idea behind this proof is roughly as follows: if $\left([s_i]_{i \in \ob\J}, \psi\right)$ is an element of the domain of $\beta_M$ for which $\left[\rho_{M^i}\left(s_i\right)\left[\alpha_{\psi(i)}(\x_i)/\x_i\right]\right] = [\x_i]$ holds in $M\la \x_i\ra_i$, i.e. for which $\TJ(M, \x_i) \vdash \rho_{M^i}\left(s_i\right)\left[\alpha_{\psi(i)}(\x_i)/\x_i\right] = \x_i$ for each $i \in \ob\J$, then we can essentially show using the aforementioned results that the equation $s_i\left[\x_{\psi(i)}/\x\right] = \x_{\id_i}$ is provable in the theory $\T\left(M^i\right)$ augmented by the new constants $\x_f$ for $f \in \mor\J$, which then (by Lemma \ref{occurrenceofidentityconstant}) forces $\psi(i) = \id_i$ and thereby entails $\T\left(M^i, \x\right) \vdash s_i = \x$, as desired. 

To prove in Proposition \ref{betaissurjective} that $\beta_M$ is surjective, we need to impose two conditions on $\T$ in Definitions \ref{singleindeterminateisotropy} and \ref{singlesortednontotaloperations}. Given an arbitrary element $\left([s_i]\right)_{i \in \ob\J} \in G_{\TJ}(M)$, we can assume without loss of generality that each $s_i$ is in $\alpha$-restricted normal form. We then apply $\theta^*$ to each $s_i$ to obtain $\left[\theta^*(s_i)\right] \in M^i\la \x\ra$ for each $i \in \ob\J$, and we show using the aforementioned results that $\left(\left[\theta^*(s_i)\right]\right)_{i \in \ob\J} \in \left(\prod_i G_\T\left(M^i\right)\right)^\J$. To construct an appropriate natural automorphism $\psi : \Id_\J \xrightarrow{\sim} \Id_\J$, we use the two aforementioned assumptions on $\T$. The assumption of Definition \ref{singleindeterminateisotropy} guarantees that each $s_i$ can be assumed to have exactly \emph{one} occurrence of $\x_i$, and hence exactly one subterm of the form $\alpha_f(\x_i)$ for an endomorphism $f : i \to i$, which we choose to be $\psi(i)$. We then show that $\psi$, so defined, is a natural automorphism of $\Id_\J$. Using the second assumption on $\T$ in Definition \ref{singlesortednontotaloperations}, we then show that if $\T$ is not single-sorted, then the various natural automorphisms $\psi_B : \Id_{\J_B^M} \xrightarrow{\sim} \Id_{\J_B^M}$ for $B \in \Sigma_\Sort$ are compatible with each other in the sense of Definition \ref{automorphismsubgroup}, so that $\left(\psi_B\right)_{B \in \Sigma} \in \Aut(\Id_\J)^M$. Let us now embark on providing the details. 

For any category $\C$ and object $C \in \ob\C$, we let $\Dom(C)$ be the class of all morphisms in $\C$ with domain $C$ (which is certainly a set if $\C$ is small).

\begin{defn}
\label{alpharestricted}
{\em If $M \in \PTJmod$ and $u \in \mathsf{Term}^c\left(\SigmaJ\left(M, \mathsf{x}_{A^i}\right)\right)$ for some $A \in \Sigma_\Sort$ and $i \in \ob\J$, then we say that $u$ is $\alpha$-\emph{restricted} if the only subterms of $u$ of the form $\alpha_f^C(v)$ are those with $C = A$ and $v \equiv \mathsf{x}_{A^i}$ and $\mathsf{dom}(f) = i$.   

In other words, the term $u \in \mathsf{Term}^c\left(\SigmaJ\left(M, \mathsf{x}_{A^i}\right)\right)$ is $\alpha$-restricted if all `$\alpha$-subterms' of $u$ have the form $\alpha_f^A(\mathsf{x}_{A^i})$ for some $f \in \mathsf{Dom}(i)$. \qed }
\end{defn}

\noindent Essentially, an $\alpha$-restricted term is a term in which all of the $\alpha$-function symbols have been `pushed inside as far as possible'. In order to prove that every (provably defined) term has an $\alpha$-restricted equivalent, we require the following lemma, whose proof may be found in \cite[Lemma 5.1.26]{thesis}: 
 
\begin{lem}
\label{firstalphalemma}
Let $M \in \PTJmod$ and let $u \in \mathsf{Term}^c\left(\SigmaJ\left(M, \mathsf{x}_{A^i}\right)\right)$ be $\alpha$-restricted, where $A \in \Sigma_\Sort$ and $i \in \ob\J$. If $u : C^j$ for some $j \in \ob\J$ and $C \in \Sigma_{\mathsf{Sort}}$, then for any morphism $f : j \to \mathsf{cod}(f)$ in $\J$, there is an $\alpha$-restricted term $u^f \in \mathsf{Term}^c\left(\SigmaJ\left(M, \mathsf{x}_{A^i}\right)\right)$ with $u^f : C^{\mathsf{cod}(f)}$ and $\TJ(M, \mathsf{x}_{A^i})$ proves the sequent $u \downarrow \ \vdash \alpha_f^C(u) = u^f$. \qed
\end{lem}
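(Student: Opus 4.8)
The plan is to prove this by induction on the structure of the $\alpha$-restricted term $u$, pushing the outermost $\alpha_f^C$ inward until it hits either a constant, the indeterminate $\mathsf{x}_{A^i}$, or an $\alpha$-subterm, in each case using the axioms of $\TJ$ from Definition \ref{thefunctortheory} to rewrite it. Throughout, I would freely use that $\TJ(M, \mathsf{x}_{A^i}) \vdash \alpha_f^C(x) \downarrow$ for any term $x$ that is provably defined and of sort $C^j$ (by Axiom \ref{thefunctortheory}.\ref{alphasaretotal} together with \cite[Lemma 2.2.24]{thesis}), so that the terms $u^f$ I construct will automatically be provably defined, and the sequent $u \downarrow \: \vdash \alpha_f^C(u) = u^f$ will make sense.

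First I would handle the base cases. If $u \equiv \mathsf{x}_{A^i}$, then since $u$ has sort $A^i$ we must have $C = A$ and $j = i$, and we simply set $u^f := \alpha_f^A(\mathsf{x}_{A^i})$, which is $\alpha$-restricted by definition (as $f \in \mathsf{Dom}(i)$); the required sequent is then trivially provable. If $u \equiv c_{C^j, s}^M$ is a diagram constant coming from an element $s \in M^j_C = M_{C^j}$, I would argue that $\TJ(M, \mathsf{x}_{A^i})$ proves $\alpha_f^C\!\left(c_{C^j, s}^M\right) = c_{C^{\mathsf{cod}(f)}, (\alpha_f^C)^M(s)}^M$ — this holds because the axioms of $\TJ(M)$ governing how function symbols act on diagram constants (analogous to those in \cite[Definition 2.2.3]{thesis}) force exactly this, since $M \models \TJ$ interprets $\alpha_f^C$ as $F^M(f)_C$; so we set $u^f := c_{C^{\mathsf{cod}(f)}, (\alpha_f^C)^M(s)}^M$, a constant and hence $\alpha$-restricted.

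Now the inductive cases. Suppose $u \equiv g^j(u_1, \ldots, u_n)$ for a function symbol $g : C_1 \times \cdots \times C_n \to C$ of $\Sigma$, with each $u_\ell$ of sort $C_\ell^j$ and $\alpha$-restricted; reasoning under the hypothesis $u \downarrow$, which forces each $u_\ell \downarrow$. By Axiom \ref{thefunctortheory}.\ref{alphasarehoms}, $\TJ(M, \mathsf{x}_{A^i})$ proves $\alpha_f^C\!\left(g^j(u_1, \ldots, u_n)\right) = g^{\mathsf{cod}(f)}\!\left(\alpha_f^{C_1}(u_1), \ldots, \alpha_f^{C_n}(u_n)\right)$; applying the induction hypothesis to each $u_\ell$ yields $\alpha$-restricted terms $u_\ell^f$ with $\TJ(M, \mathsf{x}_{A^i}) \vdash u_\ell \downarrow \: \vdash \alpha_f^{C_\ell}(u_\ell) = u_\ell^f$, and we set $u^f := g^{\mathsf{cod}(f)}(u_1^f, \ldots, u_n^f)$, which is $\alpha$-restricted since its proper subterms are $\alpha$-restricted and it introduces no new $\alpha$-symbols. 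The remaining case is $u \equiv \alpha_h^A(\mathsf{x}_{A^i})$ for some $h \in \mathsf{Dom}(i)$ with $\mathsf{cod}(h) = j$, since $u$ being $\alpha$-restricted means its only $\alpha$-subterm is of this shape and $u$ itself is an $\alpha$-subterm of $u$; here $C = A$, and by Axiom \ref{thefunctortheory}.\ref{alphacompworks} we have $\TJ(M, \mathsf{x}_{A^i}) \vdash \alpha_f^A\!\left(\alpha_h^A(\mathsf{x}_{A^i})\right) = \alpha_{f \circ h}^A(\mathsf{x}_{A^i})$, so we set $u^f := \alpha_{f \circ h}^A(\mathsf{x}_{A^i})$, which is $\alpha$-restricted since $f \circ h \in \mathsf{Dom}(i)$.

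The main obstacle — and the reason the lemma is genuinely stated about $\alpha$-restricted terms rather than arbitrary ones — is the bookkeeping in the $g^j(u_1,\ldots,u_n)$ case: one must check carefully that the reassembled term $g^{\mathsf{cod}(f)}(u_1^f,\ldots,u_n^f)$ really is $\alpha$-restricted, i.e. that no $u_\ell^f$ secretly contains an $\alpha$-subterm of the forbidden shape. This is where the definition of $\alpha$-restrictedness is doing its work: in an $\alpha$-restricted term every $\alpha$-symbol is already applied directly to $\mathsf{x}_{A^i}$, so pushing a new $\alpha_f$ inward only ever meets another $\alpha_h(\mathsf{x}_{A^i})$ (collapsed by Axiom \ref{thefunctortheory}.\ref{alphacompworks}) or a constant or the indeterminate, never a compound term — so the recursion terminates cleanly and preserves the normal form. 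The details of this verification, and of confirming provable definedness of each $u^f$ at each stage, are routine but tedious, and are exactly what is deferred to \cite[Lemma 5.1.26]{thesis}.
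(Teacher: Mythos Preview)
Your proof is correct and follows precisely the natural induction on the structure of the $\alpha$-restricted term that the cited thesis proof carries out: handling the indeterminate and diagram constants as base cases, using Axiom \ref{thefunctortheory}.\ref{alphacompworks} to collapse $\alpha_f^A(\alpha_h^A(\mathsf{x}_{A^i}))$ into $\alpha_{f\circ h}^A(\mathsf{x}_{A^i})$, and using Axiom \ref{thefunctortheory}.\ref{alphasarehoms} to push $\alpha_f$ through $g^j$ before applying the induction hypothesis componentwise. Your remark that subterms of $\alpha$-restricted terms are themselves $\alpha$-restricted (so the induction hypothesis applies to each $u_\ell$) is exactly the point that makes the recursion go through.
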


\noindent With the help of Lemma \ref{firstalphalemma} we can now show that any term has an $\alpha$-restricted equivalent; the proof may be found in \cite[Lemma 5.1.27]{thesis}:

\begin{lem}
\label{mainalphalemma}
If $M \in \PTJmod$ and $u \in \mathsf{Term}^c\left(\SigmaJ\left(M, \mathsf{x}_{A^i}\right)\right)$ for some $A \in \Sigma_\Sort$ and $i \in \ob\J$, then there is an $\alpha$-restricted term $u' \in \mathsf{Term}^c\left(\SigmaJ\left(M, \mathsf{x}_{A^i}\right)\right)$ of the same sort such that $\TJ(M, \mathsf{x}_{A^i})$ proves the sequent $u \downarrow \ \vdash u = u'$. \qed
\end{lem}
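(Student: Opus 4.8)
The plan is to argue by structural induction on the term $u \in \mathsf{Term}^c\left(\SigmaJ\left(M, \mathsf{x}_{A^i}\right)\right)$. In the base cases $u$ is either a constant symbol $c_{B^j, s}^M$ of $\SigmaJ(M)$ or the indeterminate $\mathsf{x}_{A^i}$ itself; in both cases $u$ has no subterms of the form $\alpha_f^C(v)$ whatsoever, so $u$ is already $\alpha$-restricted, and we take $u' := u$, the sequent $u \downarrow \ \vdash u = u$ being trivially provable in $\TJ\left(M, \mathsf{x}_{A^i}\right)$.

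For the inductive step we distinguish two cases according to the outermost function symbol of $u$. Suppose first that $u \equiv g^j(t_1, \ldots, t_n)$ for a function symbol $g : A_1 \times \ldots \times A_n \to A$ of $\Sigma$ and some $j \in \ob\J$, so that $t_k : A_k^j$ for each $1 \leq k \leq n$ and $u : A^j$. Applying the induction hypothesis to each subterm $t_k$ yields an $\alpha$-restricted term $t_k'$ of sort $A_k^j$ such that $\TJ\left(M, \mathsf{x}_{A^i}\right)$ proves the sequent $t_k \downarrow \ \vdash t_k = t_k'$. Then $u' := g^j(t_1', \ldots, t_n')$ is $\alpha$-restricted (no new $\alpha$-subterms have been introduced), has the same sort $A^j$ as $u$, and $\TJ\left(M, \mathsf{x}_{A^i}\right)$ proves $u \downarrow \ \vdash u = u'$: by strictness of partial Horn logic the definedness of the compound term $u$ entails the definedness of each $t_k$, whence $t_k = t_k'$ for each $k$, and substituting provably equal subterms into the defined term $g^j(t_1, \ldots, t_n)$ gives $g^j(t_1, \ldots, t_n) = g^j(t_1', \ldots, t_n')$.

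The second, more substantive case is $u \equiv \alpha_f^C(v)$ for a morphism $f : j \to k$ of $\J$ and a term $v : C^j$ (so $u : C^k$). Applying the induction hypothesis to $v$ gives an $\alpha$-restricted term $v' : C^j$ with $\TJ\left(M, \mathsf{x}_{A^i}\right)$ proving $v \downarrow \ \vdash v = v'$. Since $v'$ is now $\alpha$-restricted of sort $C^j$, Lemma \ref{firstalphalemma} applies to $v'$ and the morphism $f : j \to k$ and furnishes an $\alpha$-restricted term $(v')^f : C^k$ such that $\TJ\left(M, \mathsf{x}_{A^i}\right)$ proves $v' \downarrow \ \vdash \alpha_f^C(v') = (v')^f$. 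We take $u' := (v')^f$, which has the same sort $C^k$ as $u$. For provability we chain the facts: if $\alpha_f^C(v) \downarrow$, then $v \downarrow$ by strictness, so $v = v'$ (hence $v' \downarrow$), so $\alpha_f^C(v) = \alpha_f^C(v')$ by congruence; by Axiom \ref{thefunctortheory}.\ref{alphasaretotal} the symbol $\alpha_f^C$ is provably total, so $\alpha_f^C(v') \downarrow$ and then $\alpha_f^C(v') = (v')^f$ by Lemma \ref{firstalphalemma}; altogether $u = u'$, as required.

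The crux of the argument is this last case, but the real work there is already packaged into Lemma \ref{firstalphalemma} (proven in \cite[Lemma 5.1.26]{thesis}): pushing a single $\alpha$-symbol past one top layer of $\SigmaJ$-operations and constants is exactly what that lemma accomplishes, and the present induction merely iterates it from the inside out. Beyond invoking Lemma \ref{firstalphalemma}, the only delicate point is the routine bookkeeping of partial Horn derivations — using that definedness of a term propagates to its subterms, that provably equal subterms may be substituted within a defined compound term, and that the $\alpha$-operations are provably total by Axiom \ref{thefunctortheory}.\ref{alphasaretotal}.
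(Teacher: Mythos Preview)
Your proof is correct and follows essentially the same approach as the paper's (referenced to \cite[Lemma 5.1.27]{thesis}): a structural induction on $u$, with the only nontrivial case $u \equiv \alpha_f^C(v)$ handled by first applying the induction hypothesis to $v$ and then invoking Lemma \ref{firstalphalemma} to push the outer $\alpha_f^C$ through. Your bookkeeping with strictness and the totality of $\alpha_f^C$ (Axiom \ref{thefunctortheory}.\ref{alphasaretotal}) is exactly what is needed, and your construction also visibly satisfies the paper's subsequent remark that $u' \equiv u$ whenever $u$ is already $\alpha$-restricted.
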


\noindent It is trivial to verify (from the proof of Lemma \ref{mainalphalemma}) that if $u \in \mathsf{Term}^c\left(\SigmaJ\left(M, \mathsf{x}_{A^i}\right)\right)$ is already $\alpha$-restricted, then $u \equiv u'$. We now discuss the augmentation of the various signatures $\Sigma\left(M^k\right)$ (for $M \in \TJmod$ and $k \in \ob\J$) by new constants indexed by $\mor\J$:

\begin{defn} 
{\em Let $M \in \PTJmod$.
\begin{itemize}
\item For any $k \in \ob\J$, let $\mathsf{Cod}(k) := \left\{f \in \mor\J : \mathsf{cod}(f) = k\right\}$. For any $k \in \ob\J$ and $B \in \Sigma_\Sort$, let $\Sigma\left(M^k, \overline{\mathsf{x}^B_{\mathsf{Cod}(k)}}\right)$ be the signature obtained from $\Sigma\left(M^k\right)$ by adding pairwise distinct new constant symbols $\mathsf{x}_f^B : B$ for every $f \in \mathsf{Cod}(k)$.

\item For any $k \in \ob\J$ and $B \in \Sigma_\Sort$, let $\T\left(M^k, \overline{\mathsf{x}^B_{\mathsf{Cod}(k)}}\right)$ be the quasi-equational theory over the signature $\Sigma\left(M^k, \overline{\mathsf{x}^B_{\mathsf{Cod}(k)}}\right)$ obtained from $\T\left(M^k\right)$ by adding the axioms $\top \vdash \mathsf{x}_f^B \downarrow$ for every $f \in \mathsf{Cod}(k)$. \qed 
\end{itemize}
}
\end{defn}  

\begin{defn}
\label{theta}
{\em Let $M \in \PTJmod$ and $A \in \Sigma_\Sort$ and $i \in \ob\J$, and let $\mathsf{Term}^c\left(\SigmaJ\left(M, \mathsf{x}_{A^i}\right)\right)^*$ be the subset of $\mathsf{Term}^c\left(\SigmaJ\left(M, \mathsf{x}_{A^i}\right)\right)$ consisting of the $\alpha$-\emph{restricted} terms.
We define a map 
\[ \theta : \mathsf{Term}^c\left(\SigmaJ\left(M, \mathsf{x}_{A^i}\right)\right)^* \to \bigcup_{k \in \ob\J} \mathsf{Term}^c\left(\Sigma\left(M^k, \overline{\mathsf{x}^A_{\mathsf{Cod}(k)}}\right)\right) \] with the property that if $u : C^k$ for $k \in \ob\J$ and $C \in \Sigma_{\mathsf{Sort}}$, then $\theta(u) \in \mathsf{Term}^c\left(\Sigma\left(M^k, \overline{\mathsf{x}^A_{\mathsf{Cod}(k)}}\right)\right)$ with $\theta(u) : C$. We define $\theta$ by induction on the structure of $u \in \mathsf{Term}^c\left(\SigmaJ\left(M, \mathsf{x}_{A^i}\right)\right)^*$:

\begin{itemize}

\item For any $f : i \to k$ in $\J$, we set $\theta(\mathsf{x}_{A^i}) := \mathsf{x}_{\mathsf{id}_i}^A : A$ and $\theta\left(\alpha_f^A(\mathsf{x}_{A^i})\right) := \mathsf{x}_f^A : A$ 
(note that $\alpha_f^A(\mathsf{x}_{A^i}) : A^{k}$ and $\mathsf{x}_f^A \in \mathsf{Term}^c\left(\Sigma\left(M^{k}, \overline{\mathsf{x}^A_{\mathsf{Cod}(k)}}\right)\right)$).

\item For any $k \in \ob\J$, $C \in \Sigma_{\mathsf{Sort}}$, and $s \in M_{C^k} = M^k_C$, we set $\theta\left(c_{C^k, s}^M\right) := c_{C, s}^{M^k}$.

\item For any $k \in \ob\J$, any function symbol $g : C_1 \times \ldots \times C_n \to C$ in $\Sigma$, and any $u_1, \ldots, u_n \in \mathsf{Term}^c\left(\SigmaJ\left(M, \mathsf{x}_{A^i}\right)\right)^*$ with $u_\ell : C^k_\ell$ for all $1 \leq \ell \leq n$, we set $\theta\left(g^k(u_1, \ldots, u_n)\right) := g\left(\theta(u_1), \ldots, \theta(u_n)\right)$. \qed
\end{itemize}
}
\end{defn}

\noindent The idea behind the map $\theta$ is that it takes an $\alpha$-restricted term $t$ and replaces all of the subterms in $t$ of the form $\alpha_f^A(\x_{A^i})$ by constant symbols $\x_f^A$. The next crucial result now states that $\theta$ preserves provability of equations; its proof may be found in \cite[Proposition 5.1.29]{thesis}. 

\begin{prop}
\label{thetaprop}
Let $M \in \PTJmod$ and let $s, t \in \mathsf{Term}^c\left(\SigmaJ\left(M, \mathsf{x}_{A^i}\right)\right)^*$ for some $i \in \ob\J$ and $A \in \Sigma_\Sort$, with $s, t : C^j$ for some $C \in \Sigma_{\mathsf{Sort}}$ and $j \in \ob\J$. If $\TJ(M, \x_{A_i}) \vdash s = t$, then $\T\left(M^j, \overline{\mathsf{x}^A_{\mathsf{Cod}(j)}}\right) \vdash \theta(s) = \theta(t)$. \qed
\end{prop}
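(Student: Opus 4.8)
The plan is to realize $\theta$ as the interpretation map of a single, carefully chosen model of $\TJ(M, \x_{A^i})$. Concretely, I would build a model $N$ of $\TJ(M, \x_{A^i})$ in which every $\alpha$-restricted closed term $u$ of sort $C^j$ is interpreted as the congruence class $[\theta(u)]$ inside the free model $\Free\bigl(\T\bigl(M^j, \overline{\x^A_{\Cod(j)}}\bigr)\bigr)$. Granting this, the proposition is immediate: if $\TJ(M, \x_{A^i}) \vdash s = t$, then $N$ satisfies $s = t$, so $[\theta(s)] = s^N = t^N = [\theta(t)]$, and by the explicit description of free models in Remark \ref{explicitdescriptionoffreemodel} this is exactly $\T\bigl(M^j, \overline{\x^A_{\Cod(j)}}\bigr) \vdash \theta(s) = \theta(t)$.

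To construct $N$, I would use the isomorphism $\PTJmod \cong \PTmod^\J$ of Proposition \ref{modelsarefunctors} and define a functor $F : \J \to \Tmod$ as follows. For $k \in \ob\J$, let $F(k)$ be the $\Sigma$-reduct of $\Free\bigl(\T\bigl(M^k, \overline{\x^A_{\Cod(k)}}\bigr)\bigr)$, that is, $M^k$ with one freely adjoined element $\x_f^A$ of sort $A$ for each morphism $f \in \Cod(k)$; for $g : k \to k'$ in $\J$, let $F(g) : F(k) \to F(k')$ be the unique $\Sigma$-morphism agreeing with $F^M(g) : M^k \to M^{k'}$ on the $M^k$-part and sending $\x_f^A$ to $\x_{g \circ f}^A$ (note $g \circ f \in \Cod(k')$). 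Functoriality of $F$ follows from that of $F^M$ together with associativity of composition in $\J$; the canonical inclusions $M^k \hookrightarrow F(k)$ assemble into a natural transformation $\eta : F^M \Rightarrow F$ (naturality is trivial on the $M^k$-part, and the new generators are not in the image of $\eta_k$); and $\x_{\id_i}^A$ is a distinguished element of sort $A$ of $F(i)$. Via Proposition \ref{modelsarefunctors} and the diagram-theory setup, the data $\bigl(F, \eta, \x_{\id_i}^A\bigr)$ corresponds to a model $N$ of $\TJ(M, \x_{A^i})$ with $N_{C^k} = F(k)_C$, $\bigl(\alpha_f^B\bigr)^N = F(f)_B$, $\bigl(g^k\bigr)^N = g^{F(k)}$, $\bigl(c^M_{B^k, s}\bigr)^N = \eta_k(s)$, and $(\x_{A^i})^N = \x_{\id_i}^A$.

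It then remains to prove, by induction on the structure of an $\alpha$-restricted term $u$ of sort $C^j$, the claim that if $\TJ(M, \x_{A^i}) \vdash u \downarrow$, then $\T\bigl(M^j, \overline{\x^A_{\Cod(j)}}\bigr) \vdash \theta(u) \downarrow$ and $u^N = [\theta(u)]$ in $F(j)_C$. The cases $u \equiv \x_{A^i}$ (so $C = A$, $j = i$) and $u \equiv \alpha_f^A(\x_{A^i})$ are read off directly from the definitions of $N$, $F$, and $\theta$ (Definition \ref{theta}), using $F(f)(\x_{\id_i}^A) = \x_f^A$; the case $u \equiv c^M_{C^j, s}$ is immediate; and for $u \equiv g^j(u_1, \ldots, u_n)$ one uses strictness of partial Horn logic to obtain $\TJ(M, \x_{A^i}) \vdash u_\ell \downarrow$, applies the inductive hypothesis to each $u_\ell$, and then uses the description of operations in the free model (Remark \ref{explicitdescriptionoffreemodel}) to compute $u^N = g^{F(j)}\bigl([\theta(u_1)], \ldots, [\theta(u_n)]\bigr) = [g(\theta(u_1), \ldots, \theta(u_n))] = [\theta(u)]$, the definedness of $\theta(u)$ falling out of the fact that $u^N$ is defined in $N$. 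Applying this claim to $s$ and $t$ (which are provably defined, since $\TJ(M, \x_{A^i}) \vdash s = t$ yields $\vdash s \downarrow$ and $\vdash t \downarrow$) completes the argument. The main obstacle, and the conceptual crux, is the construction of $F$ and the verification that $\bigl(F, \eta, \x_{\id_i}^A\bigr)$ really is a $\TJ(M, \x_{A^i})$-model --- i.e. recognizing that the components ``$M^k$ with a free generator of sort $A$ for every morphism into $k$'', glued by relabelling generators along post-composition, form a functor sitting under $M$; everything else is bookkeeping about which generators $\x_f^A$ live in which $F(k)$ and about definedness hypotheses.
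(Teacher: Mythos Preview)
Your argument is correct. The paper itself does not give a proof of this proposition (it simply closes with \qed and defers to \cite[Proposition 5.1.29]{thesis}), so a line-by-line comparison is not possible. That said, your semantic strategy---build a concrete $\TJ(M,\x_{A^i})$-model $N$ whose carrier at sort $C^j$ is $\Free\bigl(\T(M^j,\overline{\x^A_{\Cod(j)}})\bigr)_C$, verify by structural induction that every $\alpha$-restricted term $u$ evaluates to $[\theta(u)]$, and then invoke soundness---is a clean and complete argument. The key verifications (functoriality of $F$ via the universal property of $M^k\la\x_f^A:f\in\Cod(k)\ra$, naturality of $\eta:F^M\Rightarrow F$, and that $(F,\eta,[\x^A_{\id_i}])$ satisfies the $\TJ(M)$-diagram axioms because $\eta$ is a $\SigmaJ$-morphism) all go through exactly as you indicate.

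One point worth making explicit is why your induction never needs to leave the object $j$: if $u:C^j$ is $\alpha$-restricted and $u\equiv g^j(u_1,\dots,u_n)$, then each $u_\ell$ has sort $C_\ell^j$ for the \emph{same} $j$, since the only function symbols in $\SigmaJ$ that change the $\J$-index are the $\alpha_f$'s, and those occur only in the base-case form $\alpha_f^A(\x_{A^i})$. This is why $\theta$ lands in a single $\T\bigl(M^j,\overline{\x^A_{\Cod(j)}}\bigr)$ and why your induction is well-founded. A more syntactic alternative (induction on derivations in partial Horn logic) would have to contend with intermediate terms that are not $\alpha$-restricted; your semantic route sidesteps this entirely, which is its main advantage.
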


\noindent We will also need the following technical lemma, whose proof may be found in \cite[Lemma 5.1.31]{thesis}. 

\begin{lem}
\label{occurrenceofidentityconstant}
Let $M \in \PTJmod$ and $k \in \ob\J$ and $B \in \Sigma_\Sort$, and suppose that $u \in \mathsf{Term}^c\left(\Sigma\left(M^k, \overline{\mathsf{x}^B_{\mathsf{Cod}(k)}}\right)\right)$ is of sort $B$ and $\T\left(M^k, \overline{\mathsf{x}^B_{\mathsf{Cod}(k)}}\right) \vdash u = \mathsf{x}_{\mathsf{id}_k}^B$. If $\T\left(M^k\right)$ is non-trivial for the sort $B$, then $u$ contains at least one occurrence of $\mathsf{x}_{\mathsf{id}_k}^B$. \qed   
\end{lem}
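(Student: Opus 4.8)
The plan is to argue by contradiction, exploiting the soundness of partial Horn logic together with the semantic reformulation of non-triviality recalled before Definition \ref{degenerate}: $\T(M^k)$ is non-trivial for the sort $B$ precisely when there is a $\T$-model $N$ admitting a $\Sigma$-morphism $h : M^k \to N$ with at least two distinct elements in $N_B$.

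So suppose, for contradiction, that the term $u$ contains \emph{no} occurrence of the constant $\mathsf{x}_{\mathsf{id}_k}^B$. Using non-triviality of $\T(M^k)$ for $B$, fix a $\T$-model $N$ and a $\Sigma$-morphism $h : M^k \to N$ with two distinct elements $b_0 \neq b_1$ in $N_B$; interpreting each constant $c_{A,a}^{M^k}$ as $h_A(a)$ makes $N$ a model of the diagram theory $\T(M^k)$. I would then form two expansions $N_0$ and $N_1$ of $N$ to partial $\Sigma(M^k, \overline{\mathsf{x}^B_{\mathsf{Cod}(k)}})$-structures: in both, interpret $\mathsf{x}_f^B$ as $b_0$ for every $f \in \mathsf{Cod}(k)$ with $f \neq \mathsf{id}_k$, and interpret $\mathsf{x}_{\mathsf{id}_k}^B$ as $b_0$ in $N_0$ and as $b_1$ in $N_1$. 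Since the only axioms of $\T(M^k, \overline{\mathsf{x}^B_{\mathsf{Cod}(k)}})$ beyond those of $\T(M^k)$ merely assert definedness of the new constants — which are now interpreted as genuine elements — both $N_0$ and $N_1$ are models of $\T(M^k, \overline{\mathsf{x}^B_{\mathsf{Cod}(k)}})$.

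The hypothesis $\T(M^k, \overline{\mathsf{x}^B_{\mathsf{Cod}(k)}}) \vdash u = \mathsf{x}_{\mathsf{id}_k}^B$ entails $\T(M^k, \overline{\mathsf{x}^B_{\mathsf{Cod}(k)}}) \vdash u \downarrow$ (by symmetry and transitivity of provable equality), so by soundness of partial Horn logic $u$ denotes a genuine element in each of $N_0$ and $N_1$, with $u^{N_0} = (\mathsf{x}_{\mathsf{id}_k}^B)^{N_0} = b_0$ and $u^{N_1} = (\mathsf{x}_{\mathsf{id}_k}^B)^{N_1} = b_1$. On the other hand, $N_0$ and $N_1$ have the same underlying $\Sigma(M^k)$-structure and assign the same value to every constant $\mathsf{x}_f^B$ with $f \neq \mathsf{id}_k$; since $\mathsf{x}_{\mathsf{id}_k}^B$ does not occur in $u$, a routine induction on the structure of $u$ gives $u^{N_0} = u^{N_1}$. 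Hence $b_0 = b_1$, a contradiction. Therefore $u$ must contain at least one occurrence of $\mathsf{x}_{\mathsf{id}_k}^B$.

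The only genuine obstacle is the inductive comparison $u^{N_0} = u^{N_1}$, which must be carried out with some care because of partiality; this is precisely where the preliminary observation that $u$ is provably defined is used, ensuring that the partial term evaluation yields an honest element of $N_B$ in both structures rather than being undefined. Everything else is a direct appeal to soundness of partial Horn logic and to the semantic characterization of triviality of diagram theories.
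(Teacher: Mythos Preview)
Your argument is correct. The paper defers this proof to \cite[Lemma 5.1.31]{thesis}, so there is no in-text proof to compare against directly; but the surrounding arguments (e.g.\ in the proofs of Propositions \ref{betaisinjective} and \ref{betaissurjective}) work syntactically, invoking the deduction theorem \cite[Theorem 10]{Horn} and conservativity of freshly added constants rather than building explicit models. The syntactic dual of your argument runs as follows: if $\mathsf{x}_{\mathsf{id}_k}^B$ does not occur in $u$, replace it by a variable $y$ via the deduction theorem to obtain $u = y$ provable over the remaining constants, repeat with a fresh $y'$ to get $y = y'$, and then strip the other indeterminates to derive $\T(M^k) \vdash^{y,y'} y = y'$, contradicting non-triviality. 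Your semantic route via two expansions $N_0$, $N_1$ of a single witnessing model $N$ is the model-theoretic shadow of exactly this manoeuvre, and is arguably cleaner since it sidesteps iterating the deduction theorem over the (possibly infinite) family $\{\mathsf{x}_f^B : f \in \mathsf{Cod}(k)\}$. One small remark on exposition: the induction showing $u^{N_0} = u^{N_1}$ does not itself rely on provable definedness of $u$, since $N_0$ and $N_1$ share carriers, function-symbol interpretations, and the value of every constant occurring in $u$, so the partial term evaluations agree in both domain and value step by step; provable definedness (via soundness applied to $u = \mathsf{x}_{\mathsf{id}_k}^B$) is what guarantees that common value actually exists so that the contradiction $b_0 = b_1$ follows.
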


\noindent We will also need the following map $\theta^*$, which essentially takes an $\alpha$-restricted term $t$, applies $\theta$ to it, and then erases all of the morphism subscripts from the indeterminates of the form $\x_f^A$ in $\theta(t)$:

\begin{defn}
\label{theta^*}
{\em Let $M \in \PTJmod$ and $A \in \Sigma_\Sort$ and $i \in \ob\J$. We define a map 
\[ \theta^* : \mathsf{Term}^c\left(\SigmaJ\left(M, \mathsf{x}_{A^i}\right)\right)^* \to \bigcup_{k \in \ob\J} \mathsf{Term}^c\left(\Sigma\left(M^k, \mathsf{x}_A\right)\right) \] with the property that if $u : C^k$ for $k \in \ob\J$ and $C \in \Sigma_{\mathsf{Sort}}$, then $\theta^*(u) \in \mathsf{Term}^c\left(\Sigma\left(M^k, \mathsf{x}_A\right)\right)$ with $\theta^*(u) : C$. To define $\theta^*$, we first define for each $k \in \ob\J$ a signature morphism \[ \lambda_k : \Sigma\left(M^k, \overline{\mathsf{x}^A_{\mathsf{Cod}(k)}}\right) \to \Sigma\left(M^k, \mathsf{x}_A\right) \] as follows: $\lambda_k$ is the identity on $\Sigma\left(M^k\right)$, and $\lambda_k\left(\mathsf{x}_f^A\right) := \mathsf{x}_A$ for any $f \in \mathsf{Cod}(k)$. By a slight abuse of notation, we also denote the induced function on closed terms as 
\[ \lambda_k : \mathsf{Term}^c\left(\Sigma\left(M^k, \overline{\mathsf{x}^A_{\mathsf{Cod}(k)}}\right)\right) \to \mathsf{Term}^c\left(\Sigma\left(M^k, \mathsf{x}_A\right)\right). \] Finally, we set
\[ \lambda := \bigcup_{k \in \ob\J} \lambda_k : \bigcup_{k \in \ob\J} \mathsf{Term}^c\left(\Sigma\left(M^k, \overline{\mathsf{x}^A_{\mathsf{Cod}(k)}}\right)\right) \to \bigcup_{k \in \ob\J} \mathsf{Term}^c\left(\Sigma\left(M^k, \mathsf{x}_A\right)\right), \] and we then define $\theta^*$ to be the composite
\[ \mathsf{Term}^c\left(\SigmaJ\left(M, \mathsf{x}_{A^i}\right)\right)^* \xrightarrow{\theta} \bigcup_{k \in \ob\J} \mathsf{Term}^c\left(\Sigma\left(M^k, \overline{\mathsf{x}^A_{\mathsf{Cod}(k)}}\right)\right) \xrightarrow{\lambda} \bigcup_{k \in \ob\J} \mathsf{Term}^c\left(\Sigma\left(M^k, \mathsf{x}_A\right)\right), \] and it is easy to see that $\theta^*$ indeed has the stated property. \qed }
\end{defn}

\noindent Before showing that $\theta^*$ preserves the provability of a certain restricted kind of sequent, we require the following technical concepts. 

\begin{defn}
\label{alphacommuting}
{\em Let $M \in \PTJmod$. 
\begin{itemize}
\item If $u \in \mathsf{Term}^c\left(\SigmaJ\left(M, \mathsf{x}_{A^i}\right)\right)^*$ for some $A \in \Sigma_\Sort$ and $i \in \ob\J$, then we say that $u$ is $i$-\emph{local} if for any subterm $v$ of $u$, there is some sort $C \in \Sigma_\Sort$ such that $v : C^i$. In particular, if $u$ is $i$-local, then $u : B^i$ for some sort $B$, and every $\alpha$-subterm of $u$ has the form $\alpha_f^A(\mathsf{x}_{A^i})$ for some endomorphism $f : i \to i$.   

\item Let $f : j \to i$ have codomain $i$. If $u \in \mathsf{Term}^c\left(\SigmaJ\left(M, \mathsf{x}_{A^i}\right)\right)^*$ is $i$-local, we define \[ u[f] \in \mathsf{Term}^c\left(\SigmaJ\left(M, \mathsf{x}_{A^j}\right)\right)^* \] (note the change from $x_{A^i}$ to $x_{A^j}$) to be the term of the same sort defined as follows:
\begin{itemize}
\item If $u \equiv \mathsf{x}_{A^i} : A^i$, then we set $u[f] := \alpha_f^A\left(\mathsf{x}_{A^j}\right) : A^i$.
\item If $u \equiv \alpha_g^A(\mathsf{x}_{A^i}) : A^i$ for some $g : i \to i$ (since $u$ is $i$-local), then we set $u[f] := \alpha_{g \circ f}^A\left(\mathsf{x}_{A^j}\right) : A^i$. 
\item If $u \equiv c_{B^i, s}^M : B^i$ for some $B \in \Sigma_{\mathsf{Sort}}$ and $s \in M_{B^i}$, then we set $u[f] := u : B^i$.

\item If $u \equiv g^i(u_1, \ldots, u_n) : B^i$ for some function symbol $g : B_1 \times \ldots \times B_n \to B$ in $\Sigma$ and $i$-local terms $u_1, \ldots, u_n \in \mathsf{Term}^c\left(\SigmaJ\left(M, \mathsf{x}_{A^i}\right)\right)^*$ with $u_\ell : B^i_\ell$ for each $1 \leq \ell \leq n$, then we set
\[ u[f] := g^i\left(u_1[f], \ldots, u_n[f]\right) : B^i. \]
\end{itemize}
In general, $u[f]$ will \emph{not} be the same term as $u^f$ from Lemma \ref{firstalphalemma}. 

\item If $u \in \mathsf{Term}\left(\SigmaJ\left(M, \mathsf{x}_{A^i}\right)\right)^*$ is $i$-local with $u : B^i$ for some $B \in \Sigma_\Sort$, then we say that $u$ \emph{commutes generically with} an endomorphism $f : i \to i$ if $\TJ(M, \mathsf{x}_{A^i}) \vdash \alpha_f^B(u) = u[f]$. \qed
\end{itemize}
}
\end{defn}

\noindent For future reference, we note the following obvious result: if $u \in \mathsf{Term}^c\left(\SigmaJ\left(M, \mathsf{x}_{A^i}\right)\right)$ is $\alpha$-restricted and $i$-local and $f : i \to i$ is an endomorphism, then \[ \TJ(M, \mathsf{x}_{A^i}) \vdash u \downarrow \ \ \Longrightarrow \ \ \TJ\left(M, \mathsf{x}_{A^i}\right) \vdash u[f] = u\left[\alpha_f^A(\mathsf{x}_{A^i})/\mathsf{x}_{A^i}\right]. \]

\noindent We can now prove that $\theta^*$ preserves provability of a certain restricted kind of sequent; the proof may be found in \cite[Lemma 5.1.34]{thesis}.

\begin{lem}
\label{theta^*lemma}
Let $M \in \PTJmod$. Let $u, s, t \in \mathsf{Term}^c\left(\SigmaJ\left(M, \mathsf{x}_{A^i}\right)\right)^*$ for some $A \in \Sigma_\Sort$ and $i \in \ob\J$, with $u : C^i$ and $s, t : D^i$ for some $C, D \in \Sigma_{\mathsf{Sort}}$. Suppose that $u \equiv h^i(u_1, \ldots, u_m)$ for some function symbol $h : C_1 \times \ldots \times C_m \to C$ of $\Sigma$ and $i$-local terms $u_1, \ldots, u_m \in \mathsf{Term}^c\left(\SigmaJ\left(M, \mathsf{x}_{A^i}\right)\right)^*$ with $u_\ell : C_\ell^i$ and $\TJ(M, \mathsf{x}_{A^i}) \vdash u_\ell \downarrow$ for each $1 \leq \ell \leq m$, and assume that $u_\ell$ commutes generically with each endomorphism $f : i \to i$ in $\J$. If $\TJ(M, \mathsf{x}_{A^i})$ proves the sequent $u \downarrow \ \vdash s = t$, then $\T\left(M^i, \mathsf{x}_A\right)$ proves the sequent $\theta^*(u) \downarrow \ \vdash \theta^*(s) = \theta^*(t)$. \qed
\end{lem}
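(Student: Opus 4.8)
First, two preliminary observations. Because every function symbol $g^k$ of $\SigmaJ$ has all of its input sorts and its output sort in the single ``layer'' $k$, whereas $\alpha_f^A$ carries layer $i$ to layer $\mathsf{cod}(f)$, an easy induction on term structure shows that every $\alpha$-restricted closed term of $\SigmaJ\left(M, \mathsf{x}_{A^i}\right)$ whose sort lies in layer $i$ is automatically $i$-local; in particular $s$, $t$ and $u$ are all $i$-local, so $\theta$ and $\theta^*$ carry them into $\Sigma\left(M^i, \overline{\mathsf{x}^A_{\mathsf{Cod}(i)}}\right)$ and $\Sigma\left(M^i, \mathsf{x}_A\right)$ respectively, and directly from the definitions of $\theta$ and $\theta^*$ we get $\theta(u) = h\!\left(\theta(u_1), \ldots, \theta(u_m)\right)$ and $\theta^*(u) = h\!\left(\theta^*(u_1), \ldots, \theta^*(u_m)\right)$. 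Secondly, since $u\downarrow$ and $\theta^*(u)\downarrow$ are Horn sentences, the deduction theorem \cite[Theorem 10]{Horn} lets us replace the hypothesis and the conclusion by statements about the corresponding ``expanded'' theories: the hypothesis becomes $\TJ\left(M, \mathsf{x}_{A^i}\right) \cup \left\{\top \vdash u\downarrow\right\} \vdash s = t$, and it suffices to prove $\T\left(M^i, \mathsf{x}_A\right) \cup \left\{\top \vdash \theta^*(u)\downarrow\right\} \vdash \theta^*(s) = \theta^*(t)$.

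The core of the argument is the following conditional strengthening of Proposition \ref{thetaprop}: for all $\alpha$-restricted closed terms $s', t' : D^i$ of $\SigmaJ\left(M, \mathsf{x}_{A^i}\right)$,
\[ \TJ\left(M, \mathsf{x}_{A^i}\right) \cup \left\{\top \vdash u\downarrow\right\} \vdash s' = t' \quad \Longrightarrow \quad \T\left(M^i, \overline{\mathsf{x}^A_{\mathsf{Cod}(i)}}\right) \cup \left\{\top \vdash \theta(u)\downarrow\right\} \vdash \theta(s') = \theta(t'), \]
where $\theta(u) = h\!\left(\theta(u_1), \ldots, \theta(u_m)\right)$ is a closed term over $\Sigma\left(M^i, \overline{\mathsf{x}^A_{\mathsf{Cod}(i)}}\right)$ by the $i$-locality of $u$. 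I would prove this by re-running the proof of Proposition \ref{thetaprop} with the single additional axiom $\top \vdash u\downarrow$ in the source theory: under $\theta$ this axiom is carried precisely to $\top \vdash \theta(u)\downarrow$, which is the additional axiom of the target theory, so the only new point to check is that the presence of this forced-definedness axiom does not disturb the compatibility of $\theta$ with substitution and with the component structures that underlies Proposition \ref{thetaprop}. This is exactly where the standing hypotheses on the $u_\ell$ enter: their $i$-locality ensures that $\theta(u)$ and each $\theta(u_\ell)$ live over the single component signature $\Sigma\left(M^i, \overline{\mathsf{x}^A_{\mathsf{Cod}(i)}}\right)$, while their generic commutation with every endomorphism $f : i \to i$ governs the way the $\alpha$-subterms $\alpha_f^A(\mathsf{x}_{A^i})$ occurring in $u$ --- equivalently, the constants $\mathsf{x}_f^A$ occurring in $\theta(u)$ --- interact with the assumption that $u$ is defined.

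Finally, recall from Definition \ref{theta^*} the signature morphism $\lambda_i : \Sigma\left(M^i, \overline{\mathsf{x}^A_{\mathsf{Cod}(i)}}\right) \to \Sigma\left(M^i, \mathsf{x}_A\right)$, which is the identity on $\Sigma\left(M^i\right)$ and sends every $\mathsf{x}_f^A$ to $\mathsf{x}_A$. It sends each axiom $\top \vdash \mathsf{x}_f^A\downarrow$ to $\top \vdash \mathsf{x}_A\downarrow$, fixes the axioms of $\T\left(M^i\right)$, and sends $\top \vdash \theta(u)\downarrow$ to $\top \vdash \lambda_i(\theta(u))\downarrow$, that is, to $\top \vdash \theta^*(u)\downarrow$ since $\theta^* = \lambda \circ \theta$; hence $\lambda_i$ is a theory morphism $\T\left(M^i, \overline{\mathsf{x}^A_{\mathsf{Cod}(i)}}\right) \cup \left\{\top \vdash \theta(u)\downarrow\right\} \to \T\left(M^i, \mathsf{x}_A\right) \cup \left\{\top \vdash \theta^*(u)\downarrow\right\}$ and so preserves provability of equations. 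Applying the conditional strengthening above to our terms $s$ and $t$ and then applying $\lambda_i$ produces a proof of $\theta^*(s) = \theta^*(t)$ in $\T\left(M^i, \mathsf{x}_A\right) \cup \left\{\top \vdash \theta^*(u)\downarrow\right\}$, which by the first paragraph completes the argument. I expect the main obstacle to be the conditional strengthening of Proposition \ref{thetaprop} in the middle step --- in particular, checking that forcing $u$ to be defined is reflected faithfully through $\theta$, which is precisely the place where the somewhat unintuitive hypotheses on the $u_\ell$ (their $i$-locality and their generic commutation with endomorphisms) are genuinely needed.
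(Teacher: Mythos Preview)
Your overall strategy---reduce via the deduction theorem and finish by pushing through the theory morphism $\lambda_i$---is sound and matches the shape one expects. However, the intermediate claim you isolate as ``the core of the argument,'' namely the conditional strengthening of Proposition~\ref{thetaprop} with target theory $\T\!\left(M^i, \overline{\mathsf{x}^A_{\mathsf{Cod}(i)}}\right) \cup \{\top \vdash \theta(u)\downarrow\}$, is actually \emph{false} as stated, and this is a genuine gap. Here is a small counterexample. Let $\T$ have one sort $A$ and one unary function symbol $h : A \to A$ with no axioms; let $\J$ have one object $i$ and a single nontrivial automorphism $f$ with $f^2 = \mathsf{id}_i$; take $M^i_A = \varnothing$. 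Put $u := h^i(\mathsf{x}_{A^i})$, so $u_1 = \mathsf{x}_{A^i}$ is $i$-local and trivially commutes generically with every endomorphism. In the source theory, from $u\downarrow$ and Axiom~\ref{thefunctortheory}.\ref{alphasarehoms} one derives $h^i(\alpha_f^A(\mathsf{x}_{A^i}))\downarrow$, hence $s' = t'$ for $s' \equiv t' \equiv h^i(\alpha_f^A(\mathsf{x}_{A^i}))$. But $\theta(s') = h(\mathsf{x}_f^A)$, whereas your target theory only contains $h(\mathsf{x}_{\mathsf{id}_i}^A)\downarrow$; since $\mathsf{x}_{\mathsf{id}_i}^A$ and $\mathsf{x}_f^A$ are unrelated constants there, one cannot derive $h(\mathsf{x}_f^A)\downarrow$, and so $\theta(s') = \theta(t')$ fails.

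The underlying phenomenon is that from $u\downarrow$ together with the $\alpha$-axioms and the generic-commutation hypothesis on the $u_\ell$, one can derive $h^i(u_1[f], \ldots, u_m[f])\downarrow$ for \emph{every} endomorphism $f : i \to i$; under $\theta$ these become a whole family of definedness assertions involving pairwise distinct constants $\mathsf{x}_g^A$, and none of them follow from the single axiom $\theta(u)\downarrow$ in your intermediate theory. The repair is either to enrich the intermediate theory to $\T\!\left(M^i, \overline{\mathsf{x}^A_{\mathsf{Cod}(i)}}\right) \cup \{\top \vdash \theta(u[f])\downarrow : f \in \mathsf{End}(i)\}$---noting that $\lambda_i$ collapses this entire family to the single axiom $\theta^*(u)\downarrow$---or, more cleanly, to bypass the intermediate theory altogether and run the argument directly at the level of $\theta^*$. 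The key point, supplied by Lemma~\ref{smallthetalemma}, is that $\theta^*(u_\ell[f]) \equiv \theta^*(u_\ell)$ for each endomorphism $f$, so all the shifted definedness assertions collapse to $\theta^*(u)\downarrow$; this is where the generic-commutation hypothesis does its real work, not in making a $\theta$-level step go through, but in ensuring that the whole family of consequences of $u\downarrow$ collapses under $\theta^*$.
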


\noindent We also have that $\theta^*$ preserves the provability of \emph{equations}; the simple proof may be found in \cite[Lemma 5.1.35]{thesis}. 

\begin{lem}
\label{secondtheta^*lemma}
Let $M \in \PTJmod$ and $A \in \Sigma_\Sort$ and $i \in \ob\J$. For any $s, t \in \mathsf{Term}^c\left(\SigmaJ\left(M, \mathsf{x}_{A^i}\right)\right)^*$ with $s, t : C^k$ for some $k \in \ob\J$ and $C \in \Sigma_{\mathsf{Sort}}$, if $\TJ(M, \mathsf{x}_{A^i})$ proves the sequent $\top \vdash s = t$, then $\T\left(M^k, \mathsf{x}_A\right)$ proves the sequent $\top \vdash \theta^*(s) = \theta^*(t)$. \qed
\end{lem}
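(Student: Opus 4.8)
The plan is to deduce this directly from Proposition \ref{thetaprop} together with the observation that each of the signature morphisms $\lambda_k$ from Definition \ref{theta^*} is in fact a theory morphism. Concretely, suppose that $\TJ(M, \mathsf{x}_{A^i})$ proves $\top \vdash s = t$; then in particular $\TJ(M, \mathsf{x}_{A^i}) \vdash s = t$, so since $s, t : C^k$ are $\alpha$-restricted, Proposition \ref{thetaprop} yields $\T\left(M^k, \overline{\mathsf{x}^A_{\mathsf{Cod}(k)}}\right) \vdash \theta(s) = \theta(t)$, where $\theta(s), \theta(t) \in \mathsf{Term}^c\left(\Sigma\left(M^k, \overline{\mathsf{x}^A_{\mathsf{Cod}(k)}}\right)\right)$ are of sort $C$.

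Next I would check that the signature morphism $\lambda_k : \Sigma\left(M^k, \overline{\mathsf{x}^A_{\mathsf{Cod}(k)}}\right) \to \Sigma\left(M^k, \mathsf{x}_A\right)$ is a theory morphism $\T\left(M^k, \overline{\mathsf{x}^A_{\mathsf{Cod}(k)}}\right) \to \T\left(M^k, \mathsf{x}_A\right)$ in the sense of Remark \ref{objectsigmorphismsaretheorymorphisms}. Since $\lambda_k$ is the identity on $\Sigma(M^k)$, it sends each axiom of $\T(M^k)$ to itself, and these are axioms of $\T(M^k, \mathsf{x}_A)$; and $\lambda_k$ sends each remaining axiom $\top \vdash \mathsf{x}_f^A \downarrow$ (for $f \in \mathsf{Cod}(k)$) to $\top \vdash \mathsf{x}_A \downarrow$, which is an axiom of $\T(M^k, \mathsf{x}_A)$ by the very construction of the latter theory. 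Hence the $\lambda_k$-translation of every axiom of $\T\left(M^k, \overline{\mathsf{x}^A_{\mathsf{Cod}(k)}}\right)$ is provable (indeed an axiom) in $\T(M^k, \mathsf{x}_A)$, so $\lambda_k$ is a theory morphism.

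Finally, theory morphisms preserve provability of sequents (\cite[Lemma 2.2.16]{thesis}), so applying $\lambda_k$ to the provable equation $\theta(s) = \theta(t)$ from the first step gives $\T(M^k, \mathsf{x}_A) \vdash \lambda_k(\theta(s)) = \lambda_k(\theta(t))$, that is, $\T(M^k, \mathsf{x}_A) \vdash \theta^*(s) = \theta^*(t)$, since $\theta^* = \lambda \circ \theta$ agrees with $\lambda_k \circ \theta$ on terms of sort $C^k$. There are no essential difficulties here: the only point requiring (minimal) care is the verification that $\lambda_k$ respects the axioms involving the adjoined constants, and once that is in hand the result is immediate.
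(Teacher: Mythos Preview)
Your proof is correct and follows essentially the same approach as the paper: apply Proposition~\ref{thetaprop} to pass from $\TJ(M,\mathsf{x}_{A^i})$ to $\T\left(M^k,\overline{\mathsf{x}^A_{\mathsf{Cod}(k)}}\right)$, then use that $\lambda_k$ is a theory morphism to land in $\T(M^k,\mathsf{x}_A)$. Indeed, the paper itself later invokes exactly this fact in the proof of Proposition~\ref{betaisinjective}, citing that ``$\lambda_i$ is a theory morphism by the proof of Lemma~\ref{theta^*lemma}''; your direct verification that $\lambda_k$ respects the axioms is the same observation.
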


\noindent The proofs of the following three lemmas may be found in \cite[Lemmas 5.1.36, 5.1.37, 5.1.38]{thesis}.

\begin{lem}
\label{smallthetalemma}
Let $M \in \PTJmod$, let $u \in \mathsf{Term}^c\left(\SigmaJ\left(M, \mathsf{x}_{A^i}\right)\right)^*$ be $i$-local for some $i \in \ob\J$ and $A \in \Sigma_\Sort$, and let $f \in \mathsf{Cod}(i)$. Then $\theta^*(u) \equiv \theta^*(u[f]) \in \mathsf{Term}^c\left(\Sigma\left(M^i, \mathsf{x}_A\right)\right)$. \qed
\end{lem}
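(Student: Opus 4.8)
The plan is to prove the claim by a routine structural induction on the $\alpha$-restricted $i$-local term $u$. The guiding intuition is that $\theta^*$ forgets \emph{all} information about which morphisms of $\J$ occur in the $\alpha$-subterms of a term (and about the object-index carried by the base indeterminate): both the bare indeterminate and every $\alpha$-subterm are sent by $\theta$ to a constant of the form $\mathsf{x}_h^A$, which $\lambda_i$ then collapses to $\mathsf{x}_A$, so that the only data $\theta^*$ genuinely retains is the ``$\Sigma(M^i)$-skeleton'' of the term (constants $c_{B^i,s}^M \mapsto c_{B,s}^{M^i}$ and operations $g^i \mapsto g$). Since passing from $u$ to $u[f]$ changes $u$ only by post-composing the morphisms appearing in its $\alpha$-subterms with $f$ and by replacing the base indeterminate $\mathsf{x}_{A^i}$ with $\mathsf{x}_{A^j}$ (where $f : j \to i$), it leaves this skeleton untouched, so $\theta^*(u)$ and $\theta^*(u[f])$ coincide. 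One first records that $u[f]$ is indeed $\alpha$-restricted as a member of $\mathsf{Term}^c\left(\SigmaJ\left(M, \mathsf{x}_{A^j}\right)\right)$: since $u$ is $i$-local, every $\alpha$-subterm of $u[f]$ has the form $\alpha_h^A(\mathsf{x}_{A^j})$ with $\mathsf{dom}(h) = j$, so $\theta$ (hence $\theta^*$) is defined on it; and $u[f]$ has the same sort $B^i$ as $u$, so $\theta^*(u[f]) \in \mathsf{Term}^c\left(\Sigma\left(M^i, \mathsf{x}_A\right)\right)$, as asserted.

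For the base cases: if $u \equiv \mathsf{x}_{A^i}$ then $u[f] \equiv \alpha_f^A(\mathsf{x}_{A^j})$, and since $f \in \mathsf{Cod}(i)$ we get $\theta^*(u) = \lambda_i\left(\mathsf{x}_{\mathsf{id}_i}^A\right) = \mathsf{x}_A = \lambda_i\left(\mathsf{x}_f^A\right) = \theta^*(u[f])$. If $u \equiv \alpha_g^A(\mathsf{x}_{A^i})$ — where $i$-locality forces $g : i \to i$ — then $u[f] \equiv \alpha_{g \circ f}^A(\mathsf{x}_{A^j})$ with $g \circ f \in \mathsf{Cod}(i)$, so $\theta^*(u) = \lambda_i\left(\mathsf{x}_g^A\right) = \mathsf{x}_A = \lambda_i\left(\mathsf{x}_{g \circ f}^A\right) = \theta^*(u[f])$. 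If $u \equiv c_{B^i, s}^M$ for some $B \in \Sigma_{\mathsf{Sort}}$ and $s \in M_{B^i}$, then $u[f] \equiv u$ by Definition \ref{alphacommuting}, so there is nothing to prove ($\theta^*(u) = c_{B,s}^{M^i} = \theta^*(u[f])$).

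For the inductive step, suppose $u \equiv g^i(u_1, \ldots, u_n)$ for some function symbol $g : B_1 \times \ldots \times B_n \to B$ of $\Sigma$ and $i$-local terms $u_\ell \in \mathsf{Term}^c\left(\SigmaJ\left(M, \mathsf{x}_{A^i}\right)\right)^*$ with $u_\ell : B_\ell^i$. Then $u[f] \equiv g^i(u_1[f], \ldots, u_n[f])$ by Definition \ref{alphacommuting}, and since $\theta$ commutes with the operations $g^i$ (Definition \ref{theta}) and $\lambda_i$ acts termwise and is the identity on $\Sigma(M^i)$ (Definition \ref{theta^*}), we have $\theta^*(u) = g\left(\theta^*(u_1), \ldots, \theta^*(u_n)\right)$ and $\theta^*(u[f]) = g\left(\theta^*(u_1[f]), \ldots, \theta^*(u_n[f])\right)$. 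By the induction hypothesis applied to each $u_\ell$ (which is again $i$-local and $\alpha$-restricted), $\theta^*(u_\ell[f]) \equiv \theta^*(u_\ell)$, and hence $\theta^*(u[f]) \equiv \theta^*(u)$, completing the induction.

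Since the argument is a direct structural induction, there is no substantial obstacle; the only point requiring care is the bookkeeping of indices — namely that one applies the ``$\mathsf{x}_{A^j}$-version'' of $\theta$ and $\theta^*$ to $u[f]$ while applying the ``$\mathsf{x}_{A^i}$-version'' to $u$, and that the morphisms $f$ and $g \circ f$ produced in passing to $u[f]$ all lie in $\mathsf{Cod}(i)$, so that the corresponding constants $\mathsf{x}_f^A$ and $\mathsf{x}_{g \circ f}^A$ really do belong to $\Sigma\left(M^i, \overline{\mathsf{x}^A_{\mathsf{Cod}(i)}}\right)$ and are therefore sent to $\mathsf{x}_A$ by $\lambda_i$.
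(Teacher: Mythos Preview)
Your proof is correct and proceeds by exactly the structural induction one would expect; the paper does not give its own argument for this lemma but defers to \cite[Lemma 5.1.36]{thesis}, where the same routine induction on the structure of the $i$-local $\alpha$-restricted term $u$ is carried out. Your bookkeeping remark at the end (that one uses the $\mathsf{x}_{A^j}$-instance of $\theta$ on $u[f]$, and that the resulting morphism subscripts all land in $\mathsf{Cod}(i)$ so that $\lambda_i$ applies) is precisely the only subtlety, and you have handled it correctly.
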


\noindent The following lemma says that $\theta^*$ interacts properly with substitution, provided that the term being substituted commutes generically with certain morphisms of $\J$:

\begin{lem}
\label{substthetalemma}
Let $M \in \PTJmod$, let $u, v \in \mathsf{Term}^c\left(\SigmaJ\left(M, \mathsf{x}_{A^i}\right)\right)^*$ for some $A \in \Sigma_\Sort$ and $i \in \ob\J$ with $v : A^i$, and suppose that $\TJ(M, \mathsf{x}_{A^i}) \vdash u, v \downarrow$. Suppose also that $u, v$ are $i$-local, and that $v$ commutes generically with every endomorphism $f : i \to i$ in $\J$.
Then
\[ \T\left(M^i, \mathsf{x}_A\right) \vdash \theta^*\left(u[v/\mathsf{x}_{A^i}]'\right) = \theta^*(u)[\theta^*(v)/\mathsf{x}_A], \] where $u[v/\mathsf{x}_{A^i}]'$ is the $\alpha$-restricted variant of $u[v/\mathsf{x}_{A^i}]$ from Lemma \ref{mainalphalemma}. \qed
\end{lem}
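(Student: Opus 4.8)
Here is how I would prove Lemma \ref{substthetalemma}.

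The plan is to induct on the structure of $u$. Since $u$ is $i$-local and $\alpha$-restricted, it must (combining Definitions \ref{alpharestricted} and \ref{alphacommuting}) have one of exactly four forms: $u \equiv \x_{A^i}$; $u \equiv c_{B^i,s}^M$ for some $B \in \Sigma_\Sort$ and $s \in M_{B^i}$; $u \equiv \alpha_f^A(\x_{A^i})$ for some endomorphism $f : i \to i$; or $u \equiv g^i(u_1,\ldots,u_n)$ for a function symbol $g : B_1 \times \ldots \times B_n \to B$ of $\Sigma$ and $i$-local, $\alpha$-restricted terms $u_1,\ldots,u_n \in \mathsf{Term}^c(\SigmaJ(M,\x_{A^i}))^*$ with $u_\ell : B_\ell^i$. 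Throughout I would use that substituting a provably defined term into a provably defined term yields a provably defined term (\cite[Lemma 2.2.24]{thesis}), so that $\TJ(M,\x_{A^i}) \vdash u[v/\x_{A^i}] \downarrow$ and hence $\TJ(M,\x_{A^i}) \vdash u[v/\x_{A^i}]' \downarrow$; applying Lemma \ref{secondtheta^*lemma} with $s = t$ then gives $\T(M^i,\x_A) \vdash \theta^*(u[v/\x_{A^i}]') \downarrow$, which is exactly what makes each equation to be proved provable even when its two sides come out syntactically identical. I would also use repeatedly that any two $\alpha$-restricted variants (in the sense of Lemma \ref{mainalphalemma}) of a single provably defined term are provably equal in $\TJ(M,\x_{A^i})$, hence have $\theta^*$-equal images by Lemma \ref{secondtheta^*lemma}. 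The two ``constant'' cases are quick: if $u \equiv \x_{A^i}$, then $u[v/\x_{A^i}] \equiv v$ is already $\alpha$-restricted so $u[v/\x_{A^i}]' \equiv v$, and since $\theta(\x_{A^i}) = \x_{\id_i}^A$ with $\lambda_i$ sending this to $\x_A$ we get $\theta^*(u) = \x_A$, so both sides of the claimed equation equal $\theta^*(v)$; if $u \equiv c_{B^i,s}^M$, then $u$ has no occurrence of $\x_{A^i}$, so $u[v/\x_{A^i}]' \equiv u$, while $\theta^*(u) = c_{B,s}^{M^i}$ has no occurrence of $\x_A$, so both sides again coincide (and $c_{B,s}^{M^i}$ is provably defined in $\T(M^i)$).

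The heart of the argument is the case $u \equiv \alpha_f^A(\x_{A^i})$ with $f : i \to i$, which is the only place the hypothesis on $v$ is used. Here $u[v/\x_{A^i}] \equiv \alpha_f^A(v)$, and generic commutation of $v$ with $f$ gives $\TJ(M,\x_{A^i}) \vdash \alpha_f^A(v) = v[f]$, where $v[f]$ is $\alpha$-restricted of sort $A^i$. Since $v[f]$ and $u[v/\x_{A^i}]'$ are both $\alpha$-restricted of sort $A^i$, provably defined, and provably equal (both being provably equal to $\alpha_f^A(v) \equiv u[v/\x_{A^i}]$), Lemma \ref{secondtheta^*lemma} yields $\T(M^i,\x_A) \vdash \theta^*(u[v/\x_{A^i}]') = \theta^*(v[f])$; and since $v$ is $i$-local and $f \in \Cod(i)$, Lemma \ref{smallthetalemma} gives $\theta^*(v[f]) \equiv \theta^*(v)$. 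On the other hand $\theta(\alpha_f^A(\x_{A^i})) = \x_f^A$ and $\lambda_i(\x_f^A) = \x_A$, so $\theta^*(u) = \x_A$ and hence $\theta^*(u)[\theta^*(v)/\x_A] = \theta^*(v)$. Thus both sides equal $\theta^*(v)$, as needed.

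For the inductive case $u \equiv g^i(u_1,\ldots,u_n)$: each $u_\ell$ is $i$-local, $\alpha$-restricted, and provably defined (since definedness of $g^i(u_1,\ldots,u_n)$ entails definedness of each $u_\ell$ in partial Horn logic), so the inductive hypothesis applies and gives $\T(M^i,\x_A) \vdash \theta^*((u_\ell[v/\x_{A^i}])') = \theta^*(u_\ell)[\theta^*(v)/\x_A]$. Writing $w_\ell \equiv u_\ell[v/\x_{A^i}]$ and $w_\ell'$ for its $\alpha$-restricted variant, the term $g^i(w_1',\ldots,w_n')$ is $\alpha$-restricted and, because $\TJ(M,\x_{A^i})$ proves each sequent $w_\ell \downarrow \, \vdash w_\ell = w_\ell'$ while $\TJ(M,\x_{A^i}) \vdash u[v/\x_{A^i}] \downarrow$, it is provably equal to $u[v/\x_{A^i}] \equiv g^i(w_1,\ldots,w_n)$ and hence to $u[v/\x_{A^i}]'$. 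Using Lemma \ref{secondtheta^*lemma} and the fact that $\theta^*$ commutes with the function symbols of $\Sigma$ (Definitions \ref{theta} and \ref{theta^*}), this yields $\T(M^i,\x_A) \vdash \theta^*(u[v/\x_{A^i}]') = g(\theta^*(w_1'),\ldots,\theta^*(w_n'))$; this term is therefore provably defined, so the congruence rule for $g$ together with the inductive hypotheses gives $\T(M^i,\x_A) \vdash g(\theta^*(w_1'),\ldots,\theta^*(w_n')) = g(\theta^*(u_1)[\theta^*(v)/\x_A],\ldots,\theta^*(u_n)[\theta^*(v)/\x_A])$. Since syntactic substitution commutes with function symbols and $\theta^*(u) = g(\theta^*(u_1),\ldots,\theta^*(u_n))$, the right-hand side equals $\theta^*(u)[\theta^*(v)/\x_A]$, and chaining the displayed equations closes the induction.

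I expect the main obstacle to be bookkeeping rather than ideas: one must argue consistently up to provable equality of $\alpha$-restricted terms (since the variant $u[v/\x_{A^i}]'$ returned by the algorithm of Lemma \ref{mainalphalemma} need not literally be the ``expected'' term in any given case), and one must track definedness at every step so that the congruence and substitution rules of partial Horn logic are applicable. The only genuinely substantive step is the $\alpha_f^A(\x_{A^i})$ case, where the generic-commutation hypothesis on $v$, combined with Lemma \ref{smallthetalemma}, is precisely what forces the two sides to agree.
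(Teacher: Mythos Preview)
Your proof is correct and follows essentially the same approach that the paper (via its reference to \cite[Lemma 5.1.37]{thesis}) intends: induction on the structure of the $i$-local, $\alpha$-restricted term $u$, with the decisive case $u \equiv \alpha_f^A(\x_{A^i})$ handled by combining the generic-commutation hypothesis on $v$ with Lemma~\ref{smallthetalemma}, and the inductive step for $g^i(u_1,\ldots,u_n)$ reduced to the subterms via Lemma~\ref{secondtheta^*lemma} and congruence. Your careful tracking of definedness and of provable equality between different $\alpha$-restricted variants is exactly what is needed in partial Horn logic.
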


\noindent We will need the following technical lemma to prove that each group homomorphism $\beta_M$ is surjective: 

\begin{lem}
\label{thetacommuteswiththeorymorphism}
Let $M \in \PTJmod$ and $A \in \Sigma_\Sort$ and $i \in \ob\J$, and let $u \in \mathsf{Term}^c\left(\SigmaJ\left(M, \mathsf{x}_{A^i}\right)\right)^*$ be an $\alpha$-restricted, $i$-local term of sort $B^i$ for some $B \in \Sigma_{\mathsf{Sort}}$ with $\TJ(M, \mathsf{x}_{A^i}) \vdash u \downarrow$. Let $f : i \to \ell$ be an arbitrary morphism of $\J$ with domain $i$. Then $\alpha_f^B(u)$ has an $\alpha$-restricted variant $\alpha_f^B(u)'$ by Lemma \ref{mainalphalemma}, and $\alpha_f^B(u)' : B^\ell$, so that $\theta^*\left(\alpha_f^B(u)'\right) \in \mathsf{Term}^c\left(\Sigma\left(M^\ell, \mathsf{x}_A\right)\right)$. And $\theta^*(u) \in \mathsf{Term}^c\left(\Sigma\left(M^i, \mathsf{x}_A\right)\right)$, so that $\rho_{f^M}^A(\theta^*(u)) \in \mathsf{Term}^c\left(\Sigma\left(M^\ell, \mathsf{x}_A\right)\right)$, where $\rho_{f^M}^A : \T\left(M^i, \mathsf{x}_A\right) \to \T\left(M^\ell,\mathsf{x}_A\right)$ is the theory morphism induced by the $\Sigma$-morphism $f^M := F^M(f) : M^i \to M^\ell$. Then $\T\left(M^\ell, \mathsf{x}_A\right) \vdash \theta^*\left(\alpha_f^B(u)'\right) = \rho_{f^M}^{A}(\theta^*(u))$. \qed
\end{lem}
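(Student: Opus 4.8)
The plan is to prove the asserted equation $\T(M^\ell, \mathsf{x}_A) \vdash \theta^*(\alpha_f^B(u)') = \rho_{f^M}^A(\theta^*(u))$ by induction on the structure of the $\alpha$-restricted, $i$-local term $u$. The key preliminary observation is that each of the three hypotheses on $u$ — being $\alpha$-restricted, being $i$-local, and satisfying $\TJ(M, \mathsf{x}_{A^i}) \vdash u \downarrow$ — is inherited by the immediate subterms of $u$, so the inductive hypothesis will apply to them with the same morphism $f : i \to \ell$. For the base cases I would compute both sides directly. If $u \equiv \mathsf{x}_{A^i}$ (so $B = A$), then $\alpha_f^A(u) = \alpha_f^A(\mathsf{x}_{A^i})$ is already $\alpha$-restricted, hence $\alpha_f^A(u)' \equiv \alpha_f^A(\mathsf{x}_{A^i})$ and $\theta^*(\alpha_f^A(u)') = \lambda_\ell(\mathsf{x}_f^A) = \mathsf{x}_A$, while $\theta^*(u) = \lambda_i(\mathsf{x}_{\mathsf{id}_i}^A) = \mathsf{x}_A$ and $\rho_{f^M}^A$ fixes $\mathsf{x}_A$. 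If $u \equiv \alpha_g^A(\mathsf{x}_{A^i})$, where $i$-locality forces $g : i \to i$ to be an endomorphism, then Axiom \ref{thefunctortheory}.\ref{alphacompworks} gives $\alpha_f^A(u)' \equiv \alpha_{f \circ g}^A(\mathsf{x}_{A^i})$, and both sides again reduce to $\mathsf{x}_A$. Finally, if $u \equiv c_{B^i, s}^M$ for some $s \in M_{B^i} = M^i_B$, then the axioms of $\TJ(M)$ governing the diagram constants force $\alpha_f^B(u)' \equiv c_{B^\ell, (\alpha_f^B)^M(s)}^M$, so $\theta^*(\alpha_f^B(u)') = c_{B, (\alpha_f^B)^M(s)}^{M^\ell}$; on the other hand $\theta^*(u) = c_{B, s}^{M^i}$, and the explicit description of the theory morphism $\rho_{f^M}^A$ induced by $f^M = F^M(f) : M^i \to M^\ell$ (\cite[Definition 2.2.17]{thesis}) gives $\rho_{f^M}^A(c_{B, s}^{M^i}) = c_{B, F^M(f)_B(s)}^{M^\ell}$, which agrees with the first side since $(\alpha_f^B)^M = F^M(f)_B$.

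For the inductive step I would take $u \equiv g^i(u_1, \ldots, u_n)$ with $g : B_1 \times \ldots \times B_n \to B$ in $\Sigma$ and $u_1, \ldots, u_n$ the immediate subterms, which are $i$-local, $\alpha$-restricted and provably defined, with $u_m : B_m^i$. Since $\TJ(M, \mathsf{x}_{A^i}) \vdash u \downarrow$, Axiom \ref{thefunctortheory}.\ref{alphasarehoms} gives $\TJ(M, \mathsf{x}_{A^i}) \vdash \alpha_f^B(u) = g^\ell\left(\alpha_f^{B_1}(u_1), \ldots, \alpha_f^{B_n}(u_n)\right)$; replacing each $\alpha_f^{B_m}(u_m)$ by its $\alpha$-restricted variant (Lemma \ref{mainalphalemma}, using totality of the $\alpha$-symbols), one checks that $g^\ell\left(\alpha_f^{B_1}(u_1)', \ldots, \alpha_f^{B_n}(u_n)'\right)$ is itself $\alpha$-restricted, of sort $B^\ell$, and provably equal in $\TJ(M, \mathsf{x}_{A^i})$ to $\alpha_f^B(u)'$. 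By Lemma \ref{secondtheta^*lemma} it then follows that $\T(M^\ell, \mathsf{x}_A)$ proves $\theta^*(\alpha_f^B(u)') = \theta^*\left(g^\ell\left(\alpha_f^{B_1}(u_1)', \ldots, \alpha_f^{B_n}(u_n)'\right)\right)$, and the recursive definitions of $\theta$ and of $\lambda_\ell$ identify the right-hand side with $g\left(\theta^*(\alpha_f^{B_1}(u_1)'), \ldots, \theta^*(\alpha_f^{B_n}(u_n)')\right)$. Applying the inductive hypothesis to each $u_m$, and using that $\rho_{f^M}^A$ is the identity on $\Sigma$ (so commutes with $g$) together with the identity $\theta^*(u) \equiv g\left(\theta^*(u_1), \ldots, \theta^*(u_n)\right)$ (which holds precisely because $u$ is $i$-local), we get $\T(M^\ell, \mathsf{x}_A) \vdash \theta^*(\alpha_f^B(u)') = g\left(\rho_{f^M}^A(\theta^*(u_1)), \ldots, \rho_{f^M}^A(\theta^*(u_n))\right) = \rho_{f^M}^A(\theta^*(u))$, completing the induction.

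The main obstacle will be the careful bookkeeping in the inductive step: one must verify that $g^\ell\left(\alpha_f^{B_1}(u_1)', \ldots, \alpha_f^{B_n}(u_n)'\right)$ really is $\alpha$-restricted and really represents $\alpha_f^B(u)$ up to provable equality (so that Lemma \ref{secondtheta^*lemma} applies to it and to $\alpha_f^B(u)'$ simultaneously), and that all of the definedness judgements needed to invoke Axiom \ref{thefunctortheory}.\ref{alphasarehoms} and the congruence rules of partial Horn logic — in particular $\TJ(M, \mathsf{x}_{A^i}) \vdash u_m \downarrow$ and $\TJ(M, \mathsf{x}_{A^i}) \vdash \alpha_f^{B_m}(u_m) \downarrow$ for each $m$ — are genuinely available. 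A secondary point requiring attention is the constant base case, which depends on the precise form of the diagram-constant axioms of $\TJ(M)$ and on the explicit formula for the induced theory morphism $\rho_{f^M}^A$.
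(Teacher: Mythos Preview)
Your proposal is correct and follows the expected approach: the paper itself does not spell out a proof but refers to \cite[Lemma 5.1.38]{thesis}, and the surrounding text makes clear that these auxiliary lemmas are established by straightforward induction on the structure of the ($\alpha$-restricted, $i$-local) term, which is precisely what you do. Your handling of the base cases (using the explicit form of $u^f$ from Lemma \ref{firstalphalemma} together with the diagram-constant axioms of $\TJ(M)$ and the definition of $\rho_{f^M}^A$) and of the inductive step (via Axiom \ref{thefunctortheory}.\ref{alphasarehoms}, Lemma \ref{secondtheta^*lemma}, and the recursive definitions of $\theta$ and $\lambda$) is accurate, and the definedness bookkeeping you flag as the main obstacle indeed goes through because strictness in partial Horn logic ensures each $u_m$ is provably defined whenever $g^i(u_1,\ldots,u_n)$ is.
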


\noindent We will also require the following technical results regarding the map $\theta$, whose proofs involve straightforward inductions on terms:

\begin{lem}
\label{theta^flemma}
Let $M \in \PTJmod$ and $A \in \Sigma_\Sort$ and $i \in \ob\J$, and let $v \in \mathsf{Term}^c\left(\SigmaJ\left(M, \mathsf{x}_{A^i}\right)\right)^*$ be of sort $B^k$ for some $B \in \Sigma_{\mathsf{Sort}}$ and $k \in \ob\J$. Fix an endomorphism $f : k \to k$ in $\J$. By Lemma \ref{firstalphalemma}, there is a term $v^f \in \mathsf{Term}^c\left(\SigmaJ\left(M, \mathsf{x}_{A^i}\right)\right)^*$ with $v^f : B^k$. Then $\theta(v), \theta\left(v^f\right) \in \mathsf{Term}^c\left(\Sigma\left(M^k, \overline{\mathsf{x}^A_{\mathsf{Cod}(k)}}\right)\right)$, and for any $g \in \mathsf{Cod}(k)$,  
$\mathsf{x}_g^A$ occurs in $\theta(v)$ iff $\mathsf{x}_{f \circ g}^A$ occurs in $\theta\left(v^f\right)$. \qed 
\end{lem}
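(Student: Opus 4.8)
The plan is to prove the claim by a straightforward structural induction on the $\alpha$-restricted term $v$, carried out in parallel with the recursive construction of $v^f$ furnished by (the proof of) Lemma \ref{firstalphalemma}. The guiding observation is that pushing $\alpha_f^B$ inward through $v$ leaves the entire $g^k$-function-symbol skeleton of $v$ untouched and alters only its ``leaves'': a subterm $c_{C^k,s}^M$ becomes $c_{C^k,(\alpha_f^C)^M(s)}^M$, a bare leaf $\mathsf{x}_{A^i}$ becomes $\alpha_f^A(\mathsf{x}_{A^i})$, and a leaf $\alpha_g^A(\mathsf{x}_{A^i})$ becomes $\alpha_{f \circ g}^A(\mathsf{x}_{A^i})$ by Axiom \ref{thefunctortheory}.\ref{alphacompworks}. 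Since $\theta$ (Definition \ref{theta}) collapses the $g^k$-skeleton to the corresponding $\Sigma$-operations, sends $c_{C^k,s}^M$ to $c_{C,s}^{M^k}$, and sends the leaves $\mathsf{x}_{A^i}$ and $\alpha_g^A(\mathsf{x}_{A^i})$ to the constants $\mathsf{x}_{\mathsf{id}_i}^A$ and $\mathsf{x}_g^A$ respectively, it follows that the constants $\mathsf{x}_h^A$ occurring in $\theta(v)$ record exactly the indexing morphisms $h$ of the leaves of $v$, while those occurring in $\theta(v^f)$ record the morphisms $f \circ h$; this is the claimed correspondence.

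Spelling this out, I would first read off from the proof of Lemma \ref{firstalphalemma} the four cases of $v^f$: if $v \equiv \mathsf{x}_{A^i}$ (so $B = A$, $k = i$) then $v^f \equiv \alpha_f^A(\mathsf{x}_{A^i})$ and $\theta(v) = \mathsf{x}_{\mathsf{id}_i}^A$, $\theta(v^f) = \mathsf{x}_f^A = \mathsf{x}_{f \circ \mathsf{id}_i}^A$; if $v \equiv \alpha_g^A(\mathsf{x}_{A^i})$ (so $B = A$ and $g \in \mathsf{Cod}(k) \cap \mathsf{Dom}(i)$) then $v^f \equiv \alpha_{f \circ g}^A(\mathsf{x}_{A^i})$ and $\theta(v) = \mathsf{x}_g^A$, $\theta(v^f) = \mathsf{x}_{f \circ g}^A$; if $v \equiv c_{C^k,s}^M$ then $v^f \equiv c_{C^k,(\alpha_f^C)^M(s)}^M$ and $\theta(v)$, $\theta(v^f)$ contain no $\mathsf{x}$-constants at all; and if $v \equiv g^k(v_1,\dots,v_n)$ for a function symbol $g$ of $\Sigma$ then $v^f \equiv g^k(v_1^f,\dots,v_n^f)$ with $\theta(v) = g(\theta(v_1),\dots,\theta(v_n))$ and $\theta(v^f) = g(\theta(v_1^f),\dots,\theta(v_n^f))$. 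The first two cases give the equivalence directly, the third vacuously, and in the last, since each $v_\ell$ is again a closed $\alpha$-restricted term of sort $C_\ell^k$ the induction hypothesis applies, and $\mathsf{x}_g^A$ occurs in $\theta(v)$ iff it occurs in some $\theta(v_\ell)$ iff (by hypothesis) $\mathsf{x}_{f \circ g}^A$ occurs in some $\theta(v_\ell^f)$ iff it occurs in $\theta(v^f)$.

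The single delicate point --- which I expect to be the main obstacle --- is the ``$\Rightarrow$'' direction of the iff: an occurrence of $\mathsf{x}_{f \circ g}^A$ in $\theta(v^f)$ only tells us, a priori, that some leaf of $v$ is indexed by a morphism $h$ with $f \circ h = f \circ g$, not necessarily $h = g$. This is an issue only when postcomposition with $f$ fails to be injective on the (finite) set of morphisms indexing the leaves of $v$, as one already sees in the base case $v \equiv \alpha_g^A(\mathsf{x}_{A^i})$. In the intended application of this lemma --- the surjectivity argument for $\beta_M$ --- the relevant endomorphism $f$ is an isomorphism of $\J$ and the term in question has a single occurrence of its indeterminate, so postcomposition with $f$ is injective and the difficulty evaporates. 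To keep the statement clean I would therefore either invoke that hypothesis at the point of use, or prove the slightly stronger position-indexed assertion --- that the leaves of $v$ and of $v^f$ are in an evident bijection under which the one indexed by $h$ corresponds to the one indexed by $f \circ h$ --- which is unambiguously true and from which the displayed occurrence statement is immediate. The remaining work is routine bookkeeping: checking that $\theta$ commutes with each $\Sigma$-operation $g$ as in Definition \ref{theta}, and that the sub-induction in the $g^k$-case is legitimate.
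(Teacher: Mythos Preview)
Your structural induction on $v$, tracking the recursive construction of $v^f$ from the proof of Lemma~\ref{firstalphalemma}, is exactly the approach the paper intends (it records the result as following from a ``straightforward induction on terms''), and your case analysis is correct.

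You have moreover spotted a genuine subtlety that the paper's one-line justification glosses over: the direction ``$\mathsf{x}_{f\circ g}^A$ occurs in $\theta(v^f)$ $\Longrightarrow$ $\mathsf{x}_g^A$ occurs in $\theta(v)$'' can fail when postcomposition by $f$ is not injective on $\mathsf{Cod}(k)$, exactly as your base-case analysis shows (take $v\equiv\alpha_h^A(\mathsf{x}_{A^i})$ with $f\circ h=f\circ g$ but $h\neq g$). Your proposed remedy --- proving the position-indexed statement that the leaves of $v$ and of $v^f$ are in bijection, the leaf indexed by $h$ corresponding to the one indexed by $f\circ h$ --- is what the induction actually establishes and is the clean fix. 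One small correction to your diagnosis of the applications: in the surjectivity proof of Proposition~\ref{betaissurjective} the morphism $f$ is \emph{not} always known to be an isomorphism at the point of use (e.g.\ when showing $\psi_B(i)$ is an isomorphism, or when $f=h:i\to k$ is arbitrary in the naturality argument). What saves those applications is your other observation, that the term in question has a \emph{single} indeterminate leaf, so the position-indexed version immediately gives that $\theta(v^f)$ has the single indeterminate $\mathsf{x}^A_{f\circ h}$ when $\theta(v)$ has the single indeterminate $\mathsf{x}^A_h$. That is all the paper ever needs.
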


\begin{lem}
\label{lastthetalemma}
Let $M \in \PTJmod$, let $u \in \mathsf{Term}^c\left(\SigmaJ\left(M, \mathsf{x}_{B^k}\right)\right)$ for some $B \in \Sigma_{\mathsf{Sort}}$ and $k \in \ob\J$, and suppose that $u$ is $\alpha$-restricted and $k$-local. Then it is easy to see that every indeterminate in $\theta(u) \in \mathsf{Term}\left(\Sigma\left(M^k, \overline{\mathsf{x}^B_{\mathsf{Cod}(k)}}\right)\right)$ has the form $\mathsf{x}_f^B$ for some endomorphism $f : k \to k$.

Suppose that the indeterminates occurring in $\theta(u)$ are $\mathsf{x}_{f_1}^B, \ldots, \mathsf{x}_{f_n}^B$, with $f_1, \ldots, f_n : k \to k$. Then for any $v \in \mathsf{Term}^c\left(\SigmaJ\left(M, \mathsf{x}_{A^i}\right)\right)^*$ for some $A \in \Sigma_\Sort$ and $i \in \ob\J$ with $v : B^k$, we know that $u[v/\mathsf{x}_{B^k}] \in \mathsf{Term}^c\left(\SigmaJ\left(M, \mathsf{x}_{A^i}\right)\right)$ has an $\alpha$-restricted variant $u[v/\mathsf{x}_{B^k}]' \in \mathsf{Term}^c\left(\SigmaJ\left(M, \mathsf{x}_{A^i}\right)\right)^*$ by Lemma \ref{mainalphalemma}. We then have
\[ \theta\left(u[v/\mathsf{x}_{B^k}]'\right) \equiv \theta(u)\left[\theta\left(v^{f_1}\right)/\mathsf{x}_{f_1}^B, \ldots, \theta\left(v^{f_n}\right)/\mathsf{x}_{f_n}^B\right] \in \mathsf{Term}^c\left(\Sigma\left(M^k, \overline{\mathsf{x}^A_{\mathsf{Cod}(k)}}\right)\right) \]
(recall from Lemma \ref{firstalphalemma} that, for each $1 \leq i \leq n$, $v^{f_i} \in \mathsf{Term}^c\left(\SigmaJ\left(M, \mathsf{x}_{A^i}\right)\right)^*$ is a term of sort $B^{\mathsf{cod}(f_i)} = B^k$). \qed
\end{lem}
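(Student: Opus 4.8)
The plan is to establish the displayed identity by structural induction on the $\alpha$-restricted, $k$-local term $u$. The preliminary assertion — that every indeterminate occurring in $\theta(u)$ has the form $\mathsf{x}_f^B$ for an endomorphism $f : k \to k$ — I would read off directly from the defining clauses of $\theta$ in Definition \ref{theta}: since $u$ is $k$-local, its only $\alpha$-subterms have the shape $\alpha_f^B(\mathsf{x}_{B^k})$ with $f : k \to k$, and the only indeterminates $\theta$ ever introduces are $\theta(\mathsf{x}_{B^k}) \equiv \mathsf{x}_{\mathsf{id}_k}^B$ and $\theta(\alpha_f^B(\mathsf{x}_{B^k})) \equiv \mathsf{x}_f^B$; the clause for a function symbol $g$ of $\Sigma$ merely recombines subterms and creates no new indeterminates.

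For the identity itself, I would distinguish the four cases according to the shape of $u$. If $u \equiv \mathsf{x}_{B^k}$, then $\theta(u) \equiv \mathsf{x}_{\mathsf{id}_k}^B$ and $u[v/\mathsf{x}_{B^k}] \equiv v$ is already $\alpha$-restricted, so its $\alpha$-restricted variant is $v$ itself; since $v^{\mathsf{id}_k} \equiv v$ — a routine consequence of the recursion defining the operation $(-)^f$ in Lemma \ref{firstalphalemma}, as composing every morphism subscript with $\mathsf{id}_k$ changes nothing — both sides reduce to $\theta(v)$. If $u \equiv \alpha_g^B(\mathsf{x}_{B^k})$ with $g : k \to k$, then $\theta(u) \equiv \mathsf{x}_g^B$ and $u[v/\mathsf{x}_{B^k}] \equiv \alpha_g^B(v)$; by the construction underlying Lemma \ref{mainalphalemma}, applied to $\alpha_g^B$ of the already-$\alpha$-restricted term $v$, the $\alpha$-restricted variant $(\alpha_g^B(v))'$ is precisely the term $v^g$ of Lemma \ref{firstalphalemma}, so again both sides reduce to $\theta(v^g)$. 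If $u \equiv c_{B^k, s}^M$, then $\theta(u)$ has no indeterminates and $\mathsf{x}_{B^k}$ does not occur in $u$, so both sides equal $c_{B, s}^{M^k}$.

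The inductive step is $u \equiv g^k(u_1, \ldots, u_m)$ for a function symbol $g$ of $\Sigma$, with each $u_\ell$ again $\alpha$-restricted and $k$-local and the indeterminates of $\theta(u) \equiv g(\theta(u_1), \ldots, \theta(u_m))$ equal to the union of those of the $\theta(u_\ell)$. Here I would first note that the $\alpha$-restricted variant of a term whose head symbol is not an $\alpha$-symbol is computed argumentwise, i.e. $(g^k(t_1, \ldots, t_m))' \equiv g^k(t_1', \ldots, t_m')$, which is immediate from the proof of Lemma \ref{mainalphalemma} since the $\alpha$-pushing never crosses a symbol $g^k$. Taking $t_\ell \equiv u_\ell[v/\mathsf{x}_{B^k}]$, applying $\theta$, and then the induction hypothesis to each $u_\ell$ — observing that substituting $\theta(v^{f_j})$ for an indeterminate $\mathsf{x}_{f_j}^B$ absent from $\theta(u_\ell)$ is harmless — yields that $\theta(u[v/\mathsf{x}_{B^k}]')$ equals $g$ applied to the terms $\theta(u_\ell)[\theta(v^{f_1})/\mathsf{x}_{f_1}^B, \ldots, \theta(v^{f_n})/\mathsf{x}_{f_n}^B]$, which is exactly $\theta(u)[\theta(v^{f_1})/\mathsf{x}_{f_1}^B, \ldots, \theta(v^{f_n})/\mathsf{x}_{f_n}^B]$ because simultaneous substitution distributes over the function symbols of $\Sigma$. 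This closes the induction.

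I expect the only genuinely delicate points to be the two syntactic identifications flagged above, namely $v^{\mathsf{id}_k} \equiv v$ and $(\alpha_g^B(v))' \equiv v^g$, which tie the notation $(-)^f$ from Lemma \ref{firstalphalemma} to the $\alpha$-restricted-variant operation from Lemma \ref{mainalphalemma}; these must be checked against the explicit recursions in the proofs of those two lemmas rather than against their statements. Everything else is bookkeeping verifying that $\theta$ and the variant operation commute with the function symbols of $\Sigma$.
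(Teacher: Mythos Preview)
Your proposal is correct and matches the paper's own approach: the paper states only that the proof is a ``straightforward induction on terms'' and gives no further details, so your case analysis is exactly what that entails.

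One minor caveat worth flagging: the syntactic identity $v^{\mathsf{id}_k} \equiv v$ that you invoke in the base case $u \equiv \mathsf{x}_{B^k}$ need not hold literally. For instance, if $v \equiv \mathsf{x}_{A^i}$ (with $A = B$ and $i = k$), the recursion behind Lemma~\ref{firstalphalemma} produces $v^{\mathsf{id}_k} \equiv \alpha_{\mathsf{id}_k}^A(\mathsf{x}_{A^k})$, not $\mathsf{x}_{A^k}$. However, what the base case actually requires is only $\theta\left(v^{\mathsf{id}_k}\right) \equiv \theta(v)$, and this does hold, since $\theta$ sends both $\mathsf{x}_{A^k}$ and $\alpha_{\mathsf{id}_k}^A(\mathsf{x}_{A^k})$ to $\mathsf{x}_{\mathsf{id}_k}^A$ (and more generally $\theta\left(w^{\mathsf{id}_k}\right) \equiv \theta(w)$ for any $\alpha$-restricted $w$, by an easy induction). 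With that adjustment your argument goes through unchanged; your identification $(\alpha_g^B(v))' \equiv v^g$ in the second base case is the correct reading of how the constructions in Lemmas~\ref{firstalphalemma} and~\ref{mainalphalemma} interact.
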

\noindent Finally, we require the following notion of `$\alpha$-free variant':

\begin{defn}
\label{alphafreevariant}
{\em Let $M \in \PTJmod$ and $A \in \Sigma_\Sort$ and $i \in \ob\J$. For any $u \in \mathsf{Term}^c\left(\SigmaJ\left(M, \mathsf{x}_{A^i}\right)\right)^*$ that is $i$-local, we define a term \[ u^{-\alpha} \in \mathsf{Term}^c\left(\SigmaJ\left(M, \mathsf{x}_{A^i}\right)\right)^* \] of the same sort, which we call the $\alpha$-\emph{free variant} of $u$:

\begin{itemize}

\item If $u \equiv \mathsf{x}_{A^i} : A^i$, then $u^{-\alpha} := u : A^i$.

\item If $u \equiv \alpha_f^A(\mathsf{x}_{A^i}) : A^i$ for some endomorphism $f : i \to i$ (since $u$ is $i$-local), then $u^{-\alpha} := \mathsf{x}_{A^i} : A^i$.

\item If $u \equiv c_{B^i, s}^{M} : B^i$ for some $B \in \Sigma_{\mathsf{Sort}}$ and $s \in M_{B^i}$, then $u^{-\alpha} := u : B^i$.

\item If $u \equiv g^i(u_1, \ldots, u_n) : B^i$ for some function symbol $g : B_1 \times \ldots \times B_n \to B$ of $\Sigma$ and $i$-local terms $u_i \in \mathsf{Term}^c\left(\SigmaJ\left(M, \mathsf{x}_{A^i}\right)\right)^*$ of sort $B_j^i$ for each $1 \leq j \leq n$, then 
$u^{-\alpha} := g^i\left(u_1^{-\alpha}, \ldots, u_n^{-\alpha}\right) : B^i$. \qed
\end{itemize}
}
\end{defn}

\noindent Essentially, the $\alpha$-free variant $u^{-\alpha}$ is obtained from $u$ by `erasing' all of the $\alpha$-function symbols in $u$ (and since $u$ is $i$-local, it is possible to do this and obtain a well-defined term of the same sort). We then have the following technical lemma, whose proof is a straightforward induction on terms:

\begin{lem}
\label{alphafreevariantlemma}
Let $M \in \PTJmod$ and $A \in \Sigma_{\mathsf{Sort}}$ and $i \in \ob\J$. For any $\alpha$-restricted and $i$-local $u \in \mathsf{Term}^c\left(\SigmaJ\left(M, \mathsf{x}_{A^i}\right)\right)^*$ we have $\rho_{M^i}^A(\theta^*(u)) \equiv u^{-\alpha}$, where $\rho_{M^i}^A : \Sigma\left(M^i, \mathsf{x}_A\right) \to \SigmaJ(M, \mathsf{x}_{A^i})$ is the signature morphism from Definition \ref{sigmorphismofM^i}. \qed
\end{lem}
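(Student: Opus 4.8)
The plan is to prove the identity $\rho_{M^i}^A(\theta^*(u)) \equiv u^{-\alpha}$ by a straightforward structural induction on the $\alpha$-restricted, $i$-local term $u \in \mathsf{Term}^c\left(\SigmaJ\left(M, \mathsf{x}_{A^i}\right)\right)^*$. Before setting up the induction I would record two bookkeeping points. First, since $u$ is $i$-local, every subterm of $u$ is again $\alpha$-restricted (Definition \ref{alpharestricted}) and $i$-local (Definition \ref{alphacommuting}) --- both conditions are plainly hereditary --- so the inductive hypothesis applies to the immediate subterms of $u$. Second, $i$-locality forces $u : B^i$ for some $B \in \Sigma_{\mathsf{Sort}}$, so $\theta^*(u)$ lands in $\mathsf{Term}^c\left(\Sigma\left(M^i, \mathsf{x}_A\right)\right)$, which is exactly the source of $\rho_{M^i}^A$; thus $\rho_{M^i}^A(\theta^*(u))$ is defined, and it lies in $\mathsf{Term}^c\left(\SigmaJ\left(M, \mathsf{x}_{A^i}\right)\right)$, the same ambient set as $u^{-\alpha}$, so that the syntactic comparison $\equiv$ is meaningful (recall also that $u^{-\alpha}$ itself is well defined precisely because $u$ is $i$-local). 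I would note too that on $i$-local terms $\theta^*$ is simply $\lambda_i \circ \theta$ (Definition \ref{theta^*}), and that $i$-locality forces every $\alpha$-subterm of $u$ to have the form $\alpha_f^A(\mathsf{x}_{A^i})$ with $f : i \to i$ an endomorphism (since then $\mathsf{dom}(f) = \mathsf{cod}(f) = i$).

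For the base cases I would unwind the relevant definitions clause by clause. If $u \equiv \mathsf{x}_{A^i}$, then $\theta(u) = \mathsf{x}_{\mathsf{id}_i}^A$ (Definition \ref{theta}), hence $\theta^*(u) = \lambda_i\left(\mathsf{x}_{\mathsf{id}_i}^A\right) = \mathsf{x}_A$, hence $\rho_{M^i}^A(\theta^*(u)) = \mathsf{x}_{A^i}$ (Definition \ref{sigmorphismofM^i}), which is $u^{-\alpha}$ by Definition \ref{alphafreevariant}. If $u \equiv \alpha_f^A(\mathsf{x}_{A^i})$ with $f : i \to i$, then $\theta(u) = \mathsf{x}_f^A$, so again $\theta^*(u) = \mathsf{x}_A$ and $\rho_{M^i}^A(\theta^*(u)) = \mathsf{x}_{A^i} = u^{-\alpha}$. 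If $u \equiv c_{B^i, s}^M$ for $B \in \Sigma_{\mathsf{Sort}}$ and $s \in M_{B^i} = M^i_B$, then $\theta(u) = c_{B, s}^{M^i}$, which $\lambda_i$ fixes (it is the identity on $\Sigma(M^i)$), so $\theta^*(u) = c_{B, s}^{M^i}$ and $\rho_{M^i}^A(\theta^*(u)) = c_{B^i, s}^M = u^{-\alpha}$.

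For the inductive step, suppose $u \equiv g^i(u_1, \ldots, u_n)$ for a function symbol $g : B_1 \times \ldots \times B_n \to B$ of $\Sigma$ and $i$-local terms $u_\ell : B_\ell^i$. Then $\theta(u) = g(\theta(u_1), \ldots, \theta(u_n))$, and since $\lambda_i$ and $\rho_{M^i}^A$ are signature morphisms with $\rho_{M^i}^A$ agreeing with $\rho^i$ on $\Sigma$ (so it sends the $\Sigma$-symbol $g$, via $\rho^i$, to $g^i$), we get $\rho_{M^i}^A(\theta^*(u)) = g^i\left(\rho_{M^i}^A(\theta^*(u_1)), \ldots, \rho_{M^i}^A(\theta^*(u_n))\right)$; the inductive hypothesis rewrites each argument as $u_\ell^{-\alpha}$, and $g^i\left(u_1^{-\alpha}, \ldots, u_n^{-\alpha}\right)$ is exactly $u^{-\alpha}$ by Definition \ref{alphafreevariant}. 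This exhausts the possible forms of an $\alpha$-restricted $i$-local term and completes the induction.

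I do not anticipate a genuine obstacle here: the whole point is that $\theta$, $\lambda_i$, $\rho_{M^i}^A$, and $(-)^{-\alpha}$ are all defined by the evident recursion on term structure, so their composites match clause by clause. The only things demanding care are the sort/type bookkeeping --- making sure each displayed term is well-sorted and that $i$-locality is invoked correctly to pin down the shape of the $\alpha$-subterms --- and keeping straight the distinction between the constant $\mathsf{x}_f^A$ of $\Sigma\left(M^k, \overline{\mathsf{x}^A_{\mathsf{Cod}(k)}}\right)$ and its image $\mathsf{x}_A$ under $\lambda_k$.
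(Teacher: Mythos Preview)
Your proposal is correct and is exactly the approach the paper indicates: the paper states only that the proof is ``a straightforward induction on terms,'' and your clause-by-clause structural induction, unwinding Definitions \ref{theta}, \ref{theta^*}, \ref{sigmorphismofM^i}, and \ref{alphafreevariant}, is precisely that induction carried out in full.
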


\noindent We can now finally prove that the group homomorphism $\beta_M : \left(\prod_i G_{\T}\left(M^i\right)\right)^\J \times \mathsf{Aut}(\mathsf{Id}_\J)^M \to G_{\TJ}(M)$ is injective:

\begin{prop}
\label{betaisinjective}
For any $M \in \PTJmod$, the group homomorphism \[ \beta_M : \left(\prod_i G_{\T}\left(M^i\right)\right)^\J \times \mathsf{Aut}(\mathsf{Id}_\J)^M \to G_{\TJ}(M) \] is injective. 
\end{prop}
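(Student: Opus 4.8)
The plan is to show that the kernel of the group homomorphism $\beta_M$ (which exists by Proposition \ref{grouphomexists}) is trivial. So suppose $(\gamma,\psi)$ lies in the kernel, i.e.\ $\beta_M(\gamma,\psi)_{C^i} = [\x_{C^i}]$ for every $i \in \ob\J$ and $C \in \Sigma_\Sort$; writing $\gamma_i^C = [s_C^i]$ as in the proof of Proposition \ref{grouphomexists}, I must show that each $\psi_C$ is the identity of $\Aut(\Id_{\J^M_C})$ and that $[s_C^i] = [\x_C]$ for all $i$ and $C$. Fix $i$ and $C$. If $\T(M^i)$ is trivial for $C$, then $i \notin \ob\J^M_C$ (so there is no component $\psi_C(i)$ to constrain) and $\T(M^i,\x_C)$ is also trivial for $C$, so that $M^i\la\x_C\ra_C$ is a singleton and $[s_C^i] = [\x_C]$ automatically; so assume $\T(M^i)$ is non-trivial for $C$. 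By the definition of $\beta_M$ the kernel hypothesis then unwinds to
\[ \TJ(M,\x_{C^i}) \vdash \rho_{M^i}^C(s_C^i)\bigl[\alpha_{\psi_C(i)}^C(\x_{C^i})/\x_{C^i}\bigr] = \x_{C^i}. \]

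The main step --- and the one I expect to carry the real subtlety --- is to pin down $\psi$. Since $\rho_{M^i}^C(s_C^i)$ is $i$-local and contains no $\alpha$-subterms, substituting $\alpha_{\psi_C(i)}^C(\x_{C^i})$ for each occurrence of $\x_{C^i}$ (note $\psi_C(i):i\to i$, so $\dom(\psi_C(i)) = i$) produces an $\alpha$-restricted term in the sense of Definition \ref{alpharestricted}, and $\x_{C^i}$ is trivially $\alpha$-restricted. I would therefore apply Proposition \ref{thetaprop} to the displayed equation (both sides of sort $C^i$) to get
\[ \T\!\left(M^i,\overline{\x^C_{\Cod(i)}}\right) \vdash \theta\Bigl(\rho_{M^i}^C(s_C^i)\bigl[\alpha_{\psi_C(i)}^C(\x_{C^i})/\x_{C^i}\bigr]\Bigr) = \theta(\x_{C^i}). \]
Unwinding Definition \ref{theta}, the right-hand side is $\x_{\id_i}^C$, while the left-hand side is precisely the term obtained from $s_C^i$ by replacing its single indeterminate $\x_C$ by the new constant $\x_{\psi_C(i)}^C$ (each $c_{B,s}^{M^i}$ and each $g$ surviving unchanged). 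Hence $\T(M^i,\overline{\x^C_{\Cod(i)}}) \vdash s_C^i\bigl[\x_{\psi_C(i)}^C/\x_C\bigr] = \x_{\id_i}^C$. Since $\T(M^i)$ is non-trivial for $C$, Lemma \ref{occurrenceofidentityconstant} forces $s_C^i[\x_{\psi_C(i)}^C/\x_C]$ to contain at least one occurrence of $\x_{\id_i}^C$; but $s_C^i$ is a term over $\Sigma(M^i,\x_C)$, so the only new constant of the form $\x_f^C$ that can occur in $s_C^i[\x_{\psi_C(i)}^C/\x_C]$ is $\x_{\psi_C(i)}^C$ itself. Comparing, $\psi_C(i) = \id_i$. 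As $i$ and $C$ range over all pairs with $\T(M^i)$ non-trivial for $C$, each $\psi_C$ is the identity natural automorphism of $\Id_{\J^M_C}$, so $\psi$ is the identity. The technical weight here is borne by Proposition \ref{thetaprop} and Lemma \ref{occurrenceofidentityconstant}; the obstacle in assembling them is correctly transporting the defining equation across $\theta$ and then tracking which new constants can survive in the image.

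It remains to show $\gamma$ is the identity. Now that $\psi_C(i) = \id_i$, Axiom \ref{thefunctortheory}.\ref{alphaidworks} collapses the kernel hypothesis to $\TJ(M,\x_{C^i}) \vdash \rho_{M^i}^C(s_C^i) = \x_{C^i}$, both sides being $\alpha$-restricted ($i$-local) terms of sort $C^i$. I would then invoke Lemma \ref{secondtheta^*lemma} to obtain $\T(M^i,\x_C) \vdash \theta^*(\rho_{M^i}^C(s_C^i)) = \theta^*(\x_{C^i})$, and identify both images via Lemma \ref{alphafreevariantlemma}: applying the injective signature morphism $\rho_{M^i}^C$ to $\theta^*(\rho_{M^i}^C(s_C^i))$ returns $\bigl(\rho_{M^i}^C(s_C^i)\bigr)^{-\alpha} = \rho_{M^i}^C(s_C^i)$, whence $\theta^*(\rho_{M^i}^C(s_C^i)) \equiv s_C^i$, and likewise $\theta^*(\x_{C^i}) \equiv \x_C$. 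Therefore $\T(M^i,\x_C) \vdash s_C^i = \x_C$, i.e.\ $\gamma_i^C = [s_C^i] = [\x_C]$. Ranging over all $C$ (the trivial-sort case already handled) and all $i$, the element $\gamma$ is the identity of $\left(\prod_i G_{\T}(M^i)\right)^\J$. Hence the kernel of $\beta_M$ is trivial, and being a group homomorphism, $\beta_M$ is injective, as desired.
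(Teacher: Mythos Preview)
Your proof is correct and follows essentially the same approach as the paper: both transport the kernel equation through $\theta$ via Proposition \ref{thetaprop}, identify the image as $s_C^i[\x_{\psi_C(i)}^C/\x_C] = \x_{\id_i}^C$, and invoke Lemma \ref{occurrenceofidentityconstant} together with the observation that $\x_{\psi_C(i)}^C$ is the only possible indeterminate appearing on the left to force $\psi_C(i) = \id_i$. Your deduction of $[s_C^i] = [\x_C]$ via Lemma \ref{secondtheta^*lemma} and Lemma \ref{alphafreevariantlemma} (using injectivity of $\rho_{M^i}^C$) is a minor repackaging of the paper's use of the theory morphism $\lambda_i$, but the content is the same.
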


\begin{proof}
Let $\gamma = (\gamma_i)_i \in \left(\prod_i G_{\T}\left(M^i\right)\right)^\J$ with $\gamma_i = \left(\left[s_C^i\right]\right)_{C \in \Sigma}$ for each $i \in \ob\J$, let $\psi \in \mathsf{Aut}(\mathsf{Id}_\J)^M$, and suppose that $\beta_M(\gamma, \psi) = ([\mathsf{x}_A])_{A \in \SigmaJ}$, the unit element of the group $G_{\TJ}(M)$. We must show that each $\gamma_i$ is the unit of the group $G_{\T}\left(M^i\right)$, i.e. we must show that $\gamma_i = ([\mathsf{x}_C])_C$ for all $i \in \ob\J$, and we must also show that $\psi$ is the unit element of $\mathsf{Aut}(\mathsf{Id}_\J)^M$. So fix $i \in \ob\J$ and $B \in \Sigma_{\mathsf{Sort}}$, and suppose first that $\T\left(M^i\right)$ is non-trivial for the sort $B$. The hypothesis implies in particular that $\beta_M(\gamma, \psi)_{B^i} = [\mathsf{x}_{B^i}]$, i.e. that \[ \left[\rho_{M^i}^B\left(s_B^i\right)\left[\alpha_{\psi_B(i)}^B(\mathsf{x}_{B^i})/\mathsf{x}_{B^i}\right]\right] = [\mathsf{x}_{B^i}] \in M\la \mathsf{x}_{B^i} \ra_{B^i}, \] which means that \[ \TJ(M, \mathsf{x}_{B^i}) \vdash \rho_{M^i}^B\left(s_B^i\right)\left[\alpha_{\psi_B(i)}^B(\mathsf{x}_{B^i})/\mathsf{x}_{B^i}\right] = \mathsf{x}_{B^i}. \] 
We first show that $\psi_B(i) = \mathsf{id}_i$. Note that $\rho_{M^i}^B\left(s_B^i\right)\left[\alpha_{\psi_B(i)}^B(\mathsf{x}_{B^i})/\mathsf{x}_{B^i}\right] \in \mathsf{Term}\left(\SigmaJ\left(M, \mathsf{x}_{B^i}\right)\right)$ is $\alpha$-restricted (because $\rho_{M^i}^B\left(s_B^i\right)$ does not contain any $\alpha$-function symbols). Then by Proposition \ref{thetaprop}, we obtain
\[ \T\left(M^i, \overline{\mathsf{x}^B_{\mathsf{Cod}(i)}}\right) \vdash \theta\left(\rho_{M^i}^B\left(s_B^i\right)\left[\alpha_{\psi_B(i)}^B(\mathsf{x}_{B^i})/\mathsf{x}_{B^i}\right]\right) = \theta(\mathsf{x}_{B^i}). \] 
Now, it is trivial to see that the only indeterminate that occurs in $\theta\left(\rho_{M^i}^B\left(s_B^i\right)\right) \in \mathsf{Term}\left(\Sigma\left(M^i, \overline{\mathsf{x}^B_{\mathsf{Cod}(i)}}\right)\right)$ is $\mathsf{x}_{\mathsf{id}_i}^B$. Then since $\rho_{M^i}^B\left(s_B^i\right) \in \mathsf{Term}\left(\SigmaJ\left(M, \mathsf{x}_{B^i}\right)\right)$ is also $\alpha$-restricted and clearly $i$-local, and since \linebreak $\rho_{M^i}^B\left(s_B^i\right)\left[\alpha_{\psi_B(i)}^B(\mathsf{x}_{B^i})/\mathsf{x}_{B^i}\right]$ is $\alpha$-restricted, it follows by Lemma \ref{lastthetalemma} that 
\[ \theta\left(\rho_{M^i}^B\left(s_B^i\right)\left[\alpha_{\psi_B(i)}^B(\mathsf{x}_{B^i})/\mathsf{x}_{B^i}\right]\right) \equiv \theta\left(\rho_{M^i}^B\left(s_B^i\right)\right)\left[\theta\left(\alpha_{\psi_B(i)}^B(\mathsf{x}_{B^i})^{\mathsf{id}_i}\right)/\mathsf{x}_{\mathsf{id}_i}^B\right], \] and hence 
\[ \theta\left(\rho_{M^i}^B\left(s_B^i\right)\left[\alpha_{\psi_B(i)}^B(\mathsf{x}_{B^i})/\mathsf{x}_{B^i}\right]\right) \equiv \theta\left(\rho_{M^i}^B\left(s_B^i\right)\right)\left[\mathsf{x}_{\psi_B(i)}^B/\mathsf{x}_{\mathsf{id}_i}^B\right], \] because
\[ \theta\left(\alpha_{\psi_B(i)}^B(\mathsf{x}_{B^i})^{\mathsf{id}_i}\right) \equiv \theta\left(\alpha_{\mathsf{id}_i \circ \psi_B(i)}^B(\mathsf{x}_{B^i})\right) \equiv \theta\left(\alpha_{\psi_B(i)}^B(\mathsf{x}_{B^i})\right) \equiv \mathsf{x}_{\psi_B(i)}^B. \] Also, it is easy to see that $\theta\left(\rho_{M^i}^B\left(s_B^i\right)\right) \equiv s_B^i\left[\mathsf{x}_{\mathsf{id}_i}^B/\mathsf{x}_B\right]$, and so we obtain
\[ \theta\left(\rho_{M^i}^B\left(s_B^i\right)\left[\alpha_{\psi_B(i)}^B(\mathsf{x}_{B^i})/\mathsf{x}_{B^i}\right]\right) \equiv s_B^i\left[\mathsf{x}_{\psi_B(i)}^B/\mathsf{x}_B\right]. \] Since $\theta(\mathsf{x}_{B^i}) \equiv \mathsf{x}_{\mathsf{id}_i}^B$, from the fact that 
\[ \T\left(M^i, \overline{\mathsf{x}^B_{\mathsf{Cod}(i)}}\right) \vdash \theta\left(\rho_{M^i}^B\left(s_B^i\right)\left[\alpha_{\psi_B(i)}^B(\mathsf{x}_{B^i})/\mathsf{x}_{B^i}\right]\right) = \theta(\mathsf{x}_{B^i}) \] we finally deduce that
\[ \T\left(M^i, \overline{\mathsf{x}^B_{\mathsf{Cod}(i)}}\right) \vdash s_B^i\left[\mathsf{x}_{\psi_B(i)}^B/\mathsf{x}_B\right] = \mathsf{x}_{\mathsf{id}_i}^B. \]
Since $\T\left(M^i\right)$ is non-trivial for the sort $B$, it then follows from Lemma \ref{occurrenceofidentityconstant} that $\mathsf{x}_{\mathsf{id}_i}^B$ occurs in $s_B^i\left[\mathsf{x}_{\psi_B(i)}^B/\mathsf{x}_{B}\right]$, and moreover it follows by \cite[Lemma 2.2.56]{thesis} that $\x_B$ occurs in $s_B^i$, so that $\x_{\psi_B(i)}^B$ occurs in $s_B^i\left[\mathsf{x}_{\psi_B(i)}^B/\mathsf{x}_{B}\right]$. But this forces $\mathsf{x}_{\psi_B(i)}^B \equiv \mathsf{x}_{\mathsf{id}_i}^B$, because $\mathsf{x}_{\psi_B(i)}^B$ is the only indeterminate occurring in $s_B^i\left[\mathsf{x}_{\psi_B(i)}^B/\mathsf{x}_{B}\right]$, and hence we deduce $\psi_B(i) = \mathsf{id}_i$, as desired. So we may now infer that
\[ \T\left(M^i, \overline{\mathsf{x}^B_{\mathsf{Cod}(i)}}\right) \vdash s_B^i\left[\mathsf{x}_{\mathsf{id}_i}^B/\mathsf{x}_{B^i}\right] = \mathsf{x}_{\mathsf{id}_i}^B. \] Since $\lambda_i : \T\left(M^i, \overline{\mathsf{x}^B_{\mathsf{Cod}(i)}}\right) \to \T\left(M^i, \mathsf{x}_B\right)$ is a theory morphism by the proof of Lemma \ref{theta^*lemma}, we then obtain
\[ \T\left(M^i, \mathsf{x}_B\right) \vdash \lambda_i\left(s_B^i\left[\mathsf{x}_{\mathsf{id}_i}^B/\mathsf{x}_{B^i}\right]\right) = \lambda_i\left(\mathsf{x}_{\mathsf{id}_i}^B\right). \] Since $\lambda_i$ is the identity except on the indeterminates of $\Sigma\left(M^i, \overline{\mathsf{x}^B_{\mathsf{Cod}(i)}}\right)$, it then follows that $\T\left(M^i, \mathsf{x}_B\right) \vdash s_B^i = \mathsf{x}_B$. This shows that if $\T\left(M^i\right)$ is non-trivial for the sort $B$, then $\gamma_i^B = \left[s_B^i\right] = [\mathsf{x}_B]$ and $\psi_B(i) = \mathsf{id}_i$, so that $\psi_B$ is the identity natural automorphism of $\mathsf{Id}_{\J^M_B}$ and hence $\psi$ is the unit element of $\mathsf{Aut}(\mathsf{Id}_\J)^M$.  

It remains to show that if $\T\left(M^i\right)$ is \emph{trivial} for the sort $B$, then $\gamma_i^B = \left[s_B^i\right] = [\mathsf{x}_B]$ in this case as well. But if $\T\left(M^i\right)$ is trivial for the sort $B$, then $\T\left(M^i, \mathsf{x}_B\right)$ is trivial for the sort $B$ as well, which implies that $\T(M^i, \mathsf{x}_{B}) \vdash s_B^i = \mathsf{x}_{B}$, because $s_B^i, \mathsf{x}_B : B$. This completes the proof that each $\gamma_i$ is the unit element of $G_\T\left(M^i\right)$, which completes the proof that $\beta_M$ is injective. 
\end{proof}

\noindent Since we will need to impose two assumptions on $\T$ in order to prove that each $\beta_M$ is surjective, let us now record what we have proven so far:

\begin{prop}
\label{arbitrarytheoryinjectivegrouphom}
Let $\T$ be an \textbf{arbitrary} quasi-equational theory and $\J$ a small index category. Then for any $M \in \PTmod$, there is an injective group homomorphism $\beta_M : \left(\prod_i G_{\T}\left(M^i\right)\right)^\J \times \mathsf{Aut}(\mathsf{Id}_\J)^M \to G_{\TJ}(M)$. \qed
\end{prop}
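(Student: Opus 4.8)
The plan is essentially bookkeeping: Proposition~\ref{arbitrarytheoryinjectivegrouphom} merely records what Propositions~\ref{grouphomexists} and~\ref{betaisinjective} have already delivered, and the only point worth emphasizing is that neither of those two results used any hypothesis on $\T$ beyond its being a quasi-equational theory. (The two mild conditions of Definitions~\ref{singleindeterminateisotropy} and~\ref{singlesortednontotaloperations} are invoked only later, for surjectivity in Proposition~\ref{betaissurjective}.) So first I would appeal to Proposition~\ref{grouphomexists} to obtain, for each $M \in \PTJmod$, the group homomorphism $\beta_M : \left(\prod_i G_{\T}\left(M^i\right)\right)^\J \times \mathsf{Aut}(\mathsf{Id}_\J)^M \to G_{\TJ}(M)$: both the definition of $\beta_M$ on elements $(\gamma, \psi)$ and the verification that $\beta_M(\gamma,\psi)$ is invertible, commutes generically with every function symbol of $\SigmaJ$, and reflects definedness, as well as the check that $\beta_M$ preserves multiplication, were carried out there in full generality. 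Then I would appeal to Proposition~\ref{betaisinjective}, whose argument proceeds through the normal-form machinery ($\alpha$-restricted terms and the maps $\theta$, $\theta^*$, together with Lemma~\ref{occurrenceofidentityconstant}) and likewise assumes nothing special about $\T$, to conclude that $\beta_M$ is injective. Concatenating the two statements gives the proposition.

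Accordingly, there is no genuine obstacle: the substantive work was done in the two cited propositions, and the only thing to be careful about is confirming the parenthetical claim above --- that the constructions and arguments in Propositions~\ref{grouphomexists} and~\ref{betaisinjective} never cite Definitions~\ref{singleindeterminateisotropy} or~\ref{singlesortednontotaloperations}, and in particular that the group structure on $\left(\prod_i G_{\T}\left(M^i\right)\right)^\J \times \mathsf{Aut}(\mathsf{Id}_\J)^M$ (via Lemma~\ref{grouptheorylemma} and Definition~\ref{automorphismsubgroup}) makes sense for arbitrary $\T$. Once that is observed, I would simply write the proof as a one-line appeal: it is immediate from Propositions~\ref{grouphomexists} and~\ref{betaisinjective}, since the proofs of both require no assumptions on $\T$.
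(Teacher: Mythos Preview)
Your proposal is correct and matches the paper's own treatment exactly: the paper presents this proposition as a summary of what has already been established, with no separate proof beyond the implicit appeal to Propositions~\ref{grouphomexists} and~\ref{betaisinjective} (note the bare \qed). Your observation that neither of those results invokes the hypotheses of Definitions~\ref{singleindeterminateisotropy} or~\ref{singlesortednontotaloperations} is precisely the point being recorded.
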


\noindent To prove that each $\beta_M$ is surjective, we will need to assume that $\T$ satisfies the conditions in the following two definitions: 

\begin{defn}
\label{singleindeterminateisotropy}
{\em Let $\T$ be a quasi-equational theory over a signature $\Sigma$. We say that $\T$ has \emph{single-indeterminate isotropy} if for any $M \in \PTmod$ and $([s_C])_{C \in \Sigma} \in G_{\T}(M)$ and $C \in \Sigma_{\mathsf{Sort}}$, we can assume without loss of generality that $s_C$ contains \emph{exactly one occurrence} of $\x_C$. \qed
}
\end{defn}

\noindent In other words, $\T$ has single-indeterminate isotropy if every component of every element of isotropy of every $\T$-model can be assumed to have exactly one occurrence of the indeterminate. This is not an \emph{overly} restrictive condition, because (apart from the theories of racks and quandles, see \cite{racks}) every example theory considered in \cite[Section 4]{MFPS}, \cite[Chapter 3]{thesis}, and \cite{FSCD} has single-indeterminate isotropy (in particular, the theory of groups does). 

\begin{defn}
\label{singlesortednontotaloperations}
{\em Let $\T$ be a quasi-equational theory over a signature $\Sigma$. If $g : A_1 \times \ldots \times A_n \to A$ is a function symbol of $\Sigma$, then we say that $g$ is \emph{totally defined in} $\T$ if $\T$ proves the sequent $\top \vdash^{y_1, \ldots, y_n} g(y_1, \ldots, y_n) \downarrow$, where $y_1, \ldots, y_n$ are pairwise distinct variables with $y_i : A_i$ for each $1 \leq i \leq n$. 

We then say that $\T$ has \emph{single-sorted non-total operations} if for any function symbol $g : A_1 \times \ldots \times A_n \to A$ of $\Sigma$ that is \emph{not} totally defined in $\T$, we have $A_i = A$ for each $1 \leq i \leq n$. \qed 
}
\end{defn} 

\noindent Again, this is not an \emph{overly} restrictive condition, because every example theory considered in the previous sources has single-sorted non-total operations (and in particular the theory of groups). In Remark \ref{notsingleindeterminateisotropy} below, we will indicate how the failure of $\T$ to satisfy the conditions in Definitions \ref{singleindeterminateisotropy} and \ref{singlesortednontotaloperations} can result in the failure of the surjectivity of $\beta_M$. We can now prove:  

\begin{prop}
\label{betaissurjective}
Let $\T$ be a quasi-equational theory with single-indeterminate isotropy and single-sorted non-total operations, and let $\J$ be a small index category. For any $M \in \PTJmod$, the group homomorphism $\beta_M : \left(\prod_i G_{\T}\left(M^i\right)\right)^\J \times \mathsf{Aut}(\mathsf{Id}_\J)^M \to G_{\TJ}(M)$ is surjective.
\end{prop}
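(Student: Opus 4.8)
The plan is to show that every element $\delta \in G_{\TJ}(M)$ lies in the image of $\beta_M$ (together with Proposition \ref{betaisinjective}, this would moreover yield that $\beta_M$ is an isomorphism). Write $\delta = ([s_{B^i}])_{i \in \ob\J,\, B \in \Sigma_{\mathsf{Sort}}}$. First I would normalize the chosen representatives: by Lemma \ref{mainalphalemma} we may assume that each $s_{B^i} \in \mathsf{Term}^c(\SigmaJ(M, \mathsf{x}_{B^i}))$ is $\alpha$-restricted, and then a short structural induction — using that the operations $g^k$ and constants of $\SigmaJ$ carry a fixed object-superscript, while the $\alpha$-function symbols are the only ones that can change it — shows that $s_{B^i}$ is automatically $i$-local. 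Moreover, since $\delta$ and $\delta^{-1}$ commute generically with every $\alpha_f^B$ for an endomorphism $f : i \to i$, the observation following Definition \ref{alphacommuting} shows that $s_{B^i}$ (and the corresponding component of $\delta^{-1}$) commutes generically with every endomorphism of $i$ in $\J$.

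Next I would construct the first coordinate $\gamma := (([\theta^*(s_{B^i})])_{B \in \Sigma})_{i \in \ob\J}$ and check that $\gamma \in (\prod_i G_{\T}(M^i))^\J$. For each fixed $i$, that $([\theta^*(s_{B^i})])_{B}$ lies in $G_{\T}(M^i)$ is obtained by transporting the witnesses carried by $\delta$ down along $\theta^*$: generic commutation with a function symbol $g$ of $\Sigma$ follows from generic commutation of $\delta$ with $g^i$ via Lemma \ref{theta^*lemma} (whose hypotheses hold because the relevant arguments are $i$-local and, by the previous paragraph, commute generically with all endomorphisms of $i$); invertibility follows from Lemmas \ref{substthetalemma} and \ref{secondtheta^*lemma}; and definedness-reflection is analogous. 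That $\gamma$ moreover satisfies $G_{\T}(F^M(f))(\gamma_i) = \gamma_j$ for every $f : i \to j$ in $\J$ is obtained from generic commutation of $\delta$ with $\alpha_f^B$ together with Lemmas \ref{smallthetalemma} and \ref{thetacommuteswiththeorymorphism}. None of this uses the two standing hypotheses on $\T$; the computations are carried out in \cite{thesis}.

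Then comes the substantive part: constructing the second coordinate $\psi \in \mathsf{Aut}(\mathsf{Id}_\J)^M$. Applying single-indeterminate isotropy of $\T$ to the components $\theta^*(s_{B^i})$ of $\gamma_i \in G_{\T}(M^i)$ and feeding the resulting normal form back through the $\theta^*$-correspondence, we may assume WLOG that each $s_{B^i}$ contains exactly one occurrence of $\mathsf{x}_{B^i}$ (see \cite{thesis}); being $\alpha$-restricted and $i$-local, $s_{B^i}$ then has at most one $\alpha$-subterm, necessarily of the form $\alpha_{f_{B,i}}^B(\mathsf{x}_{B^i})$ for a unique endomorphism $f_{B,i} : i \to i$, and for $i \in \ob\J^M_B$ I would set $\psi_B(i) := f_{B,i}$ (with $\psi_B(i) := \mathsf{id}_i$ when the single occurrence of $\mathsf{x}_{B^i}$ is bare). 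That $\psi_B(i)$ is an isomorphism follows from the invertibility of $s_{B^i}$: its substitutional inverse also has a single occurrence of $\mathsf{x}_{B^i}$, wrapped in $\alpha_g^B$ for some $g : i \to i$, and applying $\theta$ and Lemma \ref{occurrenceofidentityconstant} to the inverse identities forces $g \circ f_{B,i} = \mathsf{id}_i = f_{B,i} \circ g$. That $\psi_B$ is a natural automorphism of $\mathsf{Id}_{\J^M_B}$ follows by, for $h : i \to j$ in $\J^M_B$, pushing $\alpha_h^B$ inside the $\alpha$-restricted term $s_{B^i}$ (Axiom \ref{thefunctortheory}.\ref{alphasarehoms}), comparing the subscript on the resulting unique $\alpha$-subterm with the one produced by generic commutation of $\delta$ with $\alpha_h^B$, and once more invoking Lemma \ref{occurrenceofidentityconstant} to deduce $h \circ \psi_B(i) = \psi_B(j) \circ h$. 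Finally, to see $(\psi_B)_B \in \mathsf{Aut}(\mathsf{Id}_\J)^M$, fix a function symbol $g : A_1 \times \ldots \times A_n \to A$ of $\Sigma$, an object $i$, and $1 \leq m \leq n$ with $g^{M^i}$ non-degenerate in position $m$: if $g$ is not totally defined in $\T$, then by single-sorted non-total operations $A_m = A$, so $\psi_{A_m}(i) = \psi_A(i)$ trivially; whereas if $g$ is totally defined in $\T$ then $g^i$ is provably total in $\TJ$, and running the ``commutes generically with $g^i$'' argument of Proposition \ref{grouphomexists} in reverse through $\theta$ — comparing the morphism-subscripts occurring in the two resulting terms and using non-degeneracy in position $m$ — forces $\psi_{A_m}(i) = \psi_A(i)$.

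It then remains to verify $\beta_M(\gamma, \psi) = \delta$, component by component. Fix $i$ and $B$. If $\T(M^i)$ is trivial for $B$, then $\TJ(M, \mathsf{x}_{B^i})$ is trivial for the sort $B^i$, and both $\beta_M(\gamma, \psi)_{B^i}$ and $[s_{B^i}]$ equal $[\mathsf{x}_{B^i}]$. Otherwise, by the definition of $\beta_M$ in Proposition \ref{grouphomexists} we have $\beta_M(\gamma, \psi)_{B^i} = [\rho_{M^i}^B(\theta^*(s_{B^i}))[\alpha_{\psi_B(i)}^B(\mathsf{x}_{B^i})/\mathsf{x}_{B^i}]]$; Lemma \ref{alphafreevariantlemma} identifies $\rho_{M^i}^B(\theta^*(s_{B^i}))$ with the $\alpha$-free variant $s_{B^i}^{-\alpha}$, and since $s_{B^i}$ has its unique occurrence of $\mathsf{x}_{B^i}$ inside $\alpha_{\psi_B(i)}^B(\mathsf{x}_{B^i})$, the term $s_{B^i}^{-\alpha}[\alpha_{\psi_B(i)}^B(\mathsf{x}_{B^i})/\mathsf{x}_{B^i}]$ is provably equal to $s_{B^i}$ in $\TJ(M, \mathsf{x}_{B^i})$ (indeed syntactically $s_{B^i}$ unless the occurrence was a bare $\mathsf{x}_{B^i}$, in which case Axiom \ref{thefunctortheory}.\ref{alphaidworks} is applied). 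Hence $\beta_M(\gamma, \psi)_{B^i} = [s_{B^i}]$, so $\beta_M(\gamma, \psi) = \delta$ and $\beta_M$ is surjective. I expect the main obstacle to be the third step — extracting the single-occurrence normal form for the $s_{B^i}$ from single-indeterminate isotropy of $\T$, and verifying the compatibility clause of Definition \ref{automorphismsubgroup} via single-sorted non-total operations — since this is precisely where the two hypotheses on $\T$ are indispensable (cf.\ Remark \ref{notsingleindeterminateisotropy}); the $\theta/\theta^*$-bookkeeping of the second step is lengthy but essentially mechanical given the lemmas already assembled.
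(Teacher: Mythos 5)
Your proposal is correct and follows essentially the same route as the paper's proof: normalize each $s_{B^i}$ to $\alpha$-restricted ($i$-local) form, transport the isotropy data down via $\theta^*$ to obtain $\gamma \in \left(\prod_i G_\T\left(M^i\right)\right)^\J$, extract $\psi_B(i)$ from the unique $\alpha$-subterm guaranteed by single-indeterminate isotropy, verify membership in $\mathsf{Aut}(\mathsf{Id}_\J)^M$ via single-sorted non-total operations, and check $\beta_M(\gamma,\psi)=\delta$ using the $\alpha$-free variant. The only cosmetic deviation is that your naturality step cites Lemma \ref{occurrenceofidentityconstant} where the paper instead runs a contradiction argument through the deduction theorem to violate non-triviality, but the underlying idea (distinct morphism subscripts force triviality for the sort $B$) is the same.
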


\begin{proof}
Let $([s_{C^i}])_{i \in \ob\J, C \in \Sigma} \in G_{\TJ}(M)$. So for any $i \in \ob\J$ and $C \in \Sigma_{\mathsf{Sort}}$, we know that \linebreak $s_{C^i} \in \mathsf{Term}^c\left(\SigmaJ\left(M, \mathsf{x}_{C^i}\right)\right)$ is a closed term of sort $C^i$ with $\TJ(M, \mathsf{x}_{C^i}) \vdash s_{C^i} \downarrow$. Moreover, the $\SigmaJ_{\mathsf{Sort}}$-indexed sequence $([s_{C^i}])_{i, C}$ is invertible, commutes generically with all function symbols of $\SigmaJ$, and reflects definedness. By Lemma \ref{mainalphalemma}, we may assume without loss of generality that for each $i \in \ob\J$ and $C \in \Sigma_{\mathsf{Sort}}$, the term $s_{C^i} \in \mathsf{Term}^c\left(\SigmaJ\left(M, \mathsf{x}_{C^i}\right)\right)_{C^i}$ is $\alpha$-restricted, i.e. $s_{C^i} \in \mathsf{Term}^c\left(\SigmaJ\left(M, \mathsf{x}_{C^i}\right)\right)^*$. Then for every $C \in \Sigma_{\mathsf{Sort}}$ and $i \in \ob\J$, it follows by Lemma \ref{secondtheta^*lemma} that $\theta^*\left(s_{C^i}\right) \in \mathsf{Term}^c\left(\Sigma\left(M^i,\mathsf{x}_C\right)\right)_C$ and $\T\left(M^i, \mathsf{x}_C\right) \vdash \theta^*\left(s_{C^i}\right) \downarrow$,  so that $\left[\theta^*\left(s_{C^i}\right)\right] \in M^i \la \mathsf{x}_C \ra_C$. 
We now define $\gamma \in \left(\prod_i G_{\T}\left(M^i\right)\right)^\J$. For any $i \in \ob\J$, we define \[ \gamma_i \in G_{\T}\left(M^i\right) \subseteq \prod_{C \in \Sigma} M^i\la \mathsf{x}_C\ra_C \] as follows: for any $C \in \Sigma_{\mathsf{Sort}}$, we set
\[ \gamma_i^C := \left[\theta^*\left(s_{C^i}\right)\right] \in M^i \la \mathsf{x}_C \ra_C. \] Our goal is now to show that $\gamma \in \left(\prod_i G_{\T}\left(M^i\right)\right)^\J$, which we achieve via the following series of claims.
\begin{claim}
For any $i \in \ob\J$, $\gamma_i$ is invertible.
\end{claim}
\begin{proof}
Let $B \in \Sigma_{\mathsf{Sort}}$. We must show that there is some $\left[t^i_B\right] \in M^i \la \mathsf{x}_B \ra_B$ with \[ \T\left(M^i, \mathsf{x}_B\right) \vdash \theta^*\left(s_{B^i}\right)\left[t^i_B/\mathsf{x}_B\right] = \mathsf{x}_B = t^i_B\left[\theta^*\left(s_{B^i}\right)/\mathsf{x}_B\right]. \] Since $([s_{C^j}])_{j, C} \in G_{\TJ}(M)$, there is some $\left[s_{B^i}^{-1}\right] \in M\la \mathsf{x}_{B^i} \ra_{B^i}$ with \[ \TJ(M, \mathsf{x}_{B^i}) \vdash s_{B^i}\left[s_{B^i}^{-1}/\mathsf{x}_{B^i}\right] = \mathsf{x}_{B^i} = s_{B^i}^{-1}[s_{B^i}/\mathsf{x}_{B^i}]. \] Since $s_{B^i}^{-1} \in \mathsf{Term}^c\left(\SigmaJ\left(M, \mathsf{x}_{B^i}\right)\right)_{B^i}$ and $\TJ(M, \mathsf{x}_{B^i}) \vdash s_{B^i}^{-1} \downarrow$, we may assume without loss of generality that $s_{B^i}^{-1}$ is $\alpha$-restricted by Lemma \ref{mainalphalemma}. So $\theta^*\left(s_{B^i}^{-1}\right) \in \mathsf{Term}^c\left(\Sigma\left(M^i, \mathsf{x}_B\right)\right)$ has the property that $\T\left(M^i, \mathsf{x}_B\right) \vdash \theta^*\left(s_{B^i}^{-1}\right) \downarrow$ by Lemma \ref{secondtheta^*lemma}. Hence, we have $\left[\theta^*\left(s_{B^i}^{-1}\right)\right] \in M^i \la \mathsf{x}_B \ra_B$, so we set \[ \left[t_B^i\right] := \left[\theta^*\left(s_{B^i}^{-1}\right)\right]. \] Since $([s_{C^j}])_{j, C} \in G_{\TJ}(M)$, it follows that $([s_{C^j}])_{j, C}$ commutes generically with the function symbol $\alpha_f^B : B^i \to B^i$ for any endomorphism $f : i \to i$ in $\J$. In other words, for any endomorphism $f : i \to i \in \J$ we have
\[ \TJ(M, \mathsf{x}_{B^i}) \vdash \alpha_f^B\left(s_{B^i}\right) = s_{B^i}\left[\alpha_f^B(\mathsf{x}_{B^i})/\mathsf{x}_{B^i}\right] = s_{B^i}[f], \] the latter equality being provable by the remark after Definition \ref{alphacommuting} (since $s_{B^i}$ is $i$-local, because it is $\alpha$-restricted and of sort $B^i$ and only contains the indeterminate $\x_{B^i}$). In the same way, we also have 
\[ \TJ(M, \mathsf{x}_{B^i}) \vdash \alpha_f^B\left(s_{B^i}^{-1}\right) = s_{B^i}^{-1}\left[\alpha_f^B(\mathsf{x}_{B^i})/\mathsf{x}_{B^i}\right] = s_{B^i}^{-1}[f] \] for any endomorphism $f : i \to i$ in $\J$. Since $s_{B^i}^{-1}$ is $i$-local, this means that $s_{B^i}^{-1}$ also commutes generically with every endomorphism $f : i \to i$. Then by Lemma \ref{substthetalemma} we obtain
\[ \T\left(M^i, \mathsf{x}_B\right) \vdash \theta^*\left(s_{B^i}\left[s_{B^i}^{-1}/\mathsf{x}_{B^i}\right]'\right) = \theta^*\left(s_{B^i}\right)\left[\theta^*\left(s_{B^i}^{-1}\right)/\mathsf{x}_B\right], \tag{$*$} \] where $s_{B^i}\left[s_{B^i}^{-1}/\mathsf{x}_{B^i}\right]'$ is the $\alpha$-restricted variant of $s_{B^i}\left[s_{B^i}^{-1}/\mathsf{x}_{B^i}\right]$ with \[ \TJ(M, \mathsf{x}_{B^i}) \vdash s_{B^i}\left[s_{B^i}^{-1}/\mathsf{x}_{B^i}\right] = s_{B^i}\left[s_{B^i}^{-1}/\mathsf{x}_{B^i}\right]' \] by Lemma \ref{mainalphalemma}. From this latter equation and the defining property of $s_{B^i}^{-1}$ we obtain \[ \TJ(M, \mathsf{x}_{B^i}) \vdash s_{B^i}\left[s_{B^i}^{-1}/\mathsf{x}_{B^i}\right]' = \mathsf{x}_{B^i}. \] By Lemma \ref{secondtheta^*lemma} we then obtain \[ \T\left(M^i, \mathsf{x}_B\right) \vdash \theta^*\left(s_{B^i}\left[s_{B^i}^{-1}/\mathsf{x}_{B^i}\right]'\right) = \theta^*(\mathsf{x}_{B^i}) \equiv \mathsf{x}_B. \] Combining this with ($*$), we finally have $\T\left(M^i, \mathsf{x}_B\right) \vdash  \theta^*\left(s_{B^i}\right)\left[\theta^*(s_{B^i}^{-1})/\mathsf{x}_B\right] = \mathsf{x}_B$, as desired. The converse equality is proven analogously, which completes the proof that $\gamma_i$ is invertible. 
\end{proof}

\begin{claim}
For any $i \in \ob\J$, $\gamma_i$ commutes generically with all function symbols of $\Sigma$.
\end{claim} 
\begin{proof}
Let $g : B_1 \times \ldots \times B_n \to B$ be a function symbol of $\Sigma$. We must show that the sequent \[ g(\mathsf{x}_{B_1}, \ldots, \mathsf{x}_{B_n}) \downarrow \ \vdash \theta^*\left(s_{B^i}\right)\left[g(\mathsf{x}_{B_1}, \ldots, \mathsf{x}_{B_n})/\mathsf{x}_B\right] = g\left(\theta^*\left(s_{B^i_1}\right), \ldots, \theta^*\left(s_{B^i_n}\right)\right) \] is provable in the theory $\T\left(M^i, \mathsf{x}_{B_1}, \ldots, \mathsf{x}_{B_n}\right)$ (as in the proof of Proposition \ref{grouphomexists}, we technically need to ensure that the indeterminates on the right side of the above equation are pairwise distinct (see Definition \ref{commutesgenericallydefn}), but we will ignore this subtlety here and elsewhere in the proof to increase readability). Since $([s_{C^j}])_{j, C} \in G_{\TJ}(M)$, we know that $([s_{C^j}])_{j, C}$ commutes generically with the function symbol $g^i : B^i_1 \times \ldots B^i_n \to B^i$ of $\SigmaJ$, which means that the sequent
\[ g^i\left(\mathsf{x}_{B^i_1}, \ldots, \mathsf{x}_{B^i_n}\right) \downarrow \ \vdash s_{B^i}\left[g^i\left(\mathsf{x}_{B^i_1}, \ldots, \mathsf{x}_{B^i_n}\right)/\mathsf{x}_{B^i}\right] = g^i\left(s_{B^i_1}, \ldots, s_{B^i_n}\right) \] is provable in the theory $\TJ\left(M, \mathsf{x}_{B^i_1}, \ldots, \mathsf{x}_{B^i_n}\right)$. Since the terms $s_{B^i_1}, \ldots, s_{B^i_n}$ are all $\alpha$-restricted, it easily follows that the terms $g^i\left(\mathsf{x}_{B^i_1}, \ldots, \mathsf{x}_{B^i_n}\right)$ and $g^i\left(s_{B^i_1}, \ldots, s_{B^i_n}\right)$ are $\alpha$-restricted. By a simple extension of Lemma \ref{mainalphalemma}, there is an $\alpha$-restricted variant $s_{B^i}\left[g^i\left(\mathsf{x}_{B^i_1}, \ldots, \mathsf{x}_{B^i_n}\right)/\mathsf{x}_{B^i}\right]'$ of $s_{B^i}\left[g^i\left(\mathsf{x}_{B^i_1}, \ldots, \mathsf{x}_{B^i_n}\right)/\mathsf{x}_{B^i}\right]$ such that the sequent
\[ g^i\left(\mathsf{x}_{B^i_1}, \ldots, \mathsf{x}_{B^i_n}\right) \downarrow \ \vdash s_{B^i}\left[g^i\left(\mathsf{x}_{B^i_1}, \ldots, \mathsf{x}_{B^i_n}\right)/\mathsf{x}_{B^i}\right] = s_{B^i}\left[g^i\left(\mathsf{x}_{B^i_1}, \ldots, \mathsf{x}_{B^i_n}\right)/\mathsf{x}_{B^i}\right]' \] is provable in $\TJ\left(M, \mathsf{x}_{B^i_1}, \ldots, \mathsf{x}_{B^i_n}\right)$. For each $1 \leq m \leq n$, the indeterminate $\x_{B^i_m}$ is clearly $i$-local, we have $\TJ\left(M, \mathsf{x}_{B^i_1}, \ldots, \mathsf{x}_{B^i_n}\right) \vdash \x_{B^i_m} \downarrow$, and for each endomorphism $f : i \to i$ in $\J$ we have
\[ \TJ\left(M, \mathsf{x}_{B^i_1}, \ldots, \mathsf{x}_{B^i_n}\right) \vdash \alpha_f^{B_m}\left(\x_{B^i_m}\right) = \x_{B^i_m}[f], \] which means that $\x_{B^i_m}$ commutes generically with $f$. Then by a simple extension of Lemma \ref{theta^*lemma} and the assumption that \[ g^i\left(\mathsf{x}_{B^i_1}, \ldots, \mathsf{x}_{B^i_n}\right) \downarrow \ \vdash s_{B^i}\left[g^i\left(\mathsf{x}_{B^i_1}, \ldots, \mathsf{x}_{B^i_n}\right)/\mathsf{x}_{B^i}\right] = g^i\left(s_{B^i_1}, \ldots, s_{B^i_n}\right) \] is provable in the theory $\TJ\left(M, \mathsf{x}_{B^i_1}, \ldots, \mathsf{x}_{B^i_n}\right)$, we obtain that \[ g(\mathsf{x}_{B_1}, \ldots, \mathsf{x}_{B_n}) \downarrow \ \vdash \theta^*\left(s_{B^i}\left[g^i\left(\mathsf{x}_{B^i_1}, \ldots, \mathsf{x}_{B^i_n}\right)/\mathsf{x}_{B^i}\right]'\right) = g\left(\theta^*\left(s_{B^i_1}\right), \ldots, \theta^*\left(s_{B^i_n}\right)\right) \] is provable in the theory $\T\left(M^i, \mathsf{x}_{B_1}, \ldots, \mathsf{x}_{B_n}\right)$. Now, we will be done if we can show that
$\T\left(M^i, \mathsf{x}_{B_1}, \ldots, \mathsf{x}_{B_n}\right)$ proves the sequent 
\[ g(\mathsf{x}_{B_1}, \ldots, \mathsf{x}_{B_n}) \downarrow \ \vdash \theta^*\left(s_{B^i}\left[g^i\left(\mathsf{x}_{B^i_1}, \ldots, \mathsf{x}_{B^i_n}\right)/\mathsf{x}_{B^i}\right]'\right) = \theta^*\left(s_{B^i}\right)\left[g\left(\mathsf{x}_{B_1}, \ldots, \mathsf{x}_{B_n}\right)/\mathsf{x}_B\right]. \] Since $\theta^*\left(g^i\left(\mathsf{x}_{B^i_1}, \ldots, \mathsf{x}_{B^i_n}\right)\right) \equiv g(\mathsf{x}_{B_1}, \ldots, \mathsf{x}_{B_n})$, it suffices by a simple extension of Lemma \ref{substthetalemma} to show that $g^i\left(\mathsf{x}_{B^i_1}, \ldots, \mathsf{x}_{B^i_n}\right)$ commutes generically with every endomorphism $f : i \to i$ in $\J$, i.e. it suffices to show that the sequent
\[ g^i\left(\mathsf{x}_{B^i_1}, \ldots, \mathsf{x}_{B^i_n}\right) \downarrow \ \vdash \alpha_f^B\left(g^i\left(\mathsf{x}_{B^i_1}, \ldots, \mathsf{x}_{B^i_n}\right)\right) = g^i\left(\mathsf{x}_{B^i_1}, \ldots, \mathsf{x}_{B^i_n}\right)[f], \] i.e. the sequent
\[ g^i\left(\mathsf{x}_{B^i_1}, \ldots, \mathsf{x}_{B^i_n}\right) \downarrow \ \vdash \alpha_f^B\left(g^i\left(\mathsf{x}_{B^i_1}, \ldots, \mathsf{x}_{B^i_n}\right)\right) = g^i\left(\alpha_f^{B_1}\left(\mathsf{x}_{B^i_1}\right), \ldots, \alpha_f^B\left(\mathsf{x}_{B^i_n}\right)\right) \]
is provable in the theory $\TJ\left(M, \mathsf{x}_{B^i_1}, \ldots, \mathsf{x}_{B^i_n}\right)$ for each endomorphism $f : i \to i$ in $\J$, which is true by Axiom \ref{thefunctortheory}.\ref{alphasarehoms}. This completes the proof that $\gamma_i$ commutes generically with all function symbols of $\Sigma$. 
\end{proof}

\begin{claim}
For any $i \in \ob\J$, $\gamma_i$ reflects definedness.
\end{claim}
\begin{proof}
Let the function symbol $g \in \Sigma$ be as above (with $n \geq 1$). We must show that the sequent
\[ g\left(\theta^*\left(s_{B^i_1}\right), \ldots, \theta^*\left(s_{B^i_n}\right)\right) \downarrow \ \vdash g(\mathsf{x}_{B_1}, \ldots, \mathsf{x}_{B_n}) \downarrow \] is provable in the theory $\T\left(M^i, \mathsf{x}_{B_1}, \ldots, \mathsf{x}_{B_n}\right)$. Since $([s_{C^j}])_{j, C} \in G_{\TJ}(M)$, we know that $([s_{C^j}])_{j, C}$ reflects definedness, which implies that the sequent
\[ g^i\left(s_{B^i_1}, \ldots, s_{B^i_n}\right) \downarrow \ \vdash g^i\left(\mathsf{x}_{B^i_1}, \ldots, \mathsf{x}_{B^i_n}\right) \downarrow \] is provable in the theory $\TJ\left(M, \mathsf{x}_{B^i_1}, \ldots, \mathsf{x}_{B^i_n}\right)$. As remarked above, the terms in the latter sequent are both $\alpha$-restricted. For each $1 \leq m \leq n$, the term $s_{B^i_m}$ is $i$-local and satisfies $\TJ\left(M, \mathsf{x}_{B^i_1}, \ldots, \mathsf{x}_{B^i_n}\right) \vdash s_{B^i_m} \downarrow$. The term $s_{B^i_m}$ also commutes generically with every endomorphism $f : i \to i$ in $\J$, because $([s_{C^j}])_{j, C} \in G_{\TJ}(M)$ and thus commutes generically with the function symbol $\alpha_f^{B_m} : B^i_m \to B^i_m$. So by a simple extension of Lemma \ref{theta^*lemma}, it follows that $\T\left(M^i, \mathsf{x}_{B_1}, \ldots, \mathsf{x}_{B_n}\right)$ proves the sequent
\[ \theta^*\left(g^i\left(s_{B^i_1}, \ldots, s_{B^i_n}\right)\right) \downarrow \ \vdash \theta^*\left(g^i\left(\mathsf{x}_{B^i_1}, \ldots, \mathsf{x}_{B^i_n}\right)\right) \downarrow. \] But this is the desired sequent by definition of $\theta^*$. 
\end{proof}

So $\gamma \in \prod_i G_\T\left(M^i\right)$ by the previous three claims, and now we must show that $\gamma \in \left(\prod_i G_\T\left(M^i\right)\right)^\J$. To show this, let $f : i \to k$ be an arbitrary morphism of $\J$. We must show that $G_{\T}\left(f^M\right)(\gamma_i) = \gamma_k$ (recall that $f^M := F^M(f) : M^i \to M^k$). Unravelling the definitions, this means that we must show for any $B \in \Sigma_{\mathsf{Sort}}$ that
\[ \left[\rho_{f^M}^B\left(\theta^*\left(s_{B^i}\right)\right)\right] = \left[\theta^*\left(s_{B^k}\right)\right] \] holds in $M^k\la \mathsf{x}_B \ra_B$, i.e. that
\[ \T\left(M^k, \mathsf{x}_B\right) \vdash \rho_{f^M}^B\left(\theta^*\left(s_{B^i}\right)\right) = \theta^*\left(s_{B^k}\right), \]
where $\rho_{f^M}^B : \T\left(M^i, \mathsf{x}_B\right) \to \T\left(M^k, \mathsf{x}_B\right)$ is the theory morphism induced by the $\Sigma$-morphism $f^M : M^i \to M^k$ by \cite[Definition 2.2.17]{thesis}.

Since $([s_{C^j}])_{j, C} \in G_{\TJ}(M)$, we know that $([s_{C^j}])_{j, C}$ commutes generically with the function symbol $\alpha_f^B : B^i \to B^k$ of $\SigmaJ$, which means that
\[ \TJ(M, \mathsf{x}_{B^i}) \vdash s_{B^k}\left[\alpha_f^B(\mathsf{x}_{B^i})/\mathsf{x}_{B^k}\right] = \alpha_f^B\left(s_{B^i}\right) \] (since $\TJ(M, \mathsf{x}_{B^i}) \vdash \alpha_f^B(\mathsf{x}_{B^i}) \downarrow$). By Lemma \ref{mainalphalemma}, there are $\alpha$-restricted variants $s_{B^k}\left[\alpha_f^B(\mathsf{x}_{B^i})/\mathsf{x}_{B^k}\right]', \alpha_f^B\left(s_{B^i}\right)' \in \mathsf{Term}^c\left(\SigmaJ\left(M, \mathsf{x}_{B^i}\right)\right)$ of these terms. So we have \[ \TJ(M, \mathsf{x}_{B^i}) \vdash s_{B^k}\left[\alpha_f^B(\mathsf{x}_{B^i})/\mathsf{x}_{B^k}\right]' = \alpha_f^B\left(s_{B^i}\right)'. \] Then by Lemma \ref{secondtheta^*lemma} we obtain
\[ \T\left(M^k, \mathsf{x}_B\right) \vdash \theta^*\left(s_{B^k}\left[\alpha_f^B(\mathsf{x}_{B^i})/\mathsf{x}_{B^k}\right]'\right) = \theta^*\left(\alpha_f^B\left(s_{B^i}\right)'\right), \] since both of the arguments of $\theta^*$ are of sort $B^k$. By Lemma \ref{smallthetalemma}, since $s_{B^k}$ is $k$-local and $f \in \mathsf{Cod}(k)$, we have $\theta^*\left(s_{B^k}\right) \equiv \theta^*\left(s_{B^k}[f]\right)$. We also have (by the observation following Definition \ref{alphacommuting})
\[ \TJ(M, \mathsf{x}_{B^i}) \vdash s_{B^k}[f] = s_{B^k}\left[\alpha_f^B(\mathsf{x}_{B^i})/\mathsf{x}_{B^k}\right], \] and hence
\[ \TJ(M, \mathsf{x}_{B^i}) \vdash s_{B^k}[f] = s_{B^k}\left[\alpha_f^B(\mathsf{x}_{B^i})/\mathsf{x}_{B^k}\right]'. \] Then by Lemma \ref{secondtheta^*lemma} we deduce
\[ \T\left(M^k, \mathsf{x}_B\right) \vdash \theta^*\left(s_{B^k}\right) \equiv \theta^*\left(s_{B^k}[f]\right) = \theta^*\left(s_{B^k}\left[\alpha_f^B(\mathsf{x}_{B^i})/\mathsf{x}_{B^k}\right]'\right) = \theta^*\left(\alpha_f^B\left(s_{B^i}\right)'\right). \] So to obtain our desired result, it suffices to show that \[ \T\left(M^k, \mathsf{x}_B\right) \vdash \theta^*\left(\alpha_f^B\left(s_{B^i}\right)'\right) = \rho_{f^M}^B\left(\theta^*\left(s_{B^i}\right)\right); \] but this is true by Lemma \ref{thetacommuteswiththeorymorphism}, given that $s_{B^i}$ is $\alpha$-restricted and $i$-local and $\TJ(M, \mathsf{x}_{B^i}) \vdash s_{B^i} \downarrow$. This completes the proof that $\gamma \in \left(\prod_i G_\T\left(M^i\right)\right)^\J$. 

To complete the proof that $\beta_M$ is surjective, we must now produce an element $\psi \in \mathsf{Aut}(\mathsf{Id}_\J)^M$ and then show that $\beta_M(\gamma, \psi) = ([s_{C^j}])_{j, C}$. So for every sort $B \in \Sigma$, we must construct a natural automorphism $\psi_B : \mathsf{Id}_{\J^M_B} \to \mathsf{Id}_{\J^M_B}$. Let $i$ be any object of $\J^M_B$. Then by definition of $\J^M_B$, it follows that the theory $\T\left(M^i\right)$ is non-trivial for the sort $B$. We now wish to define an endomorphism $\psi_B(i) : i \to i$ (which will turn out to be an isomorphism). 

We have shown that $\gamma_i := \left(\left[\theta^*\left(s_{C^i}\right)\right]\right)_{C \in \Sigma} \in G_{\T}\left(M^i\right)$. Then because $\T$ has single-indeterminate isotropy, we can assume without loss of generality that $\theta^*\left(s_{B^i}\right) \in \mathsf{Term}^c\left(\Sigma\left(M^i, \mathsf{x}_B\right)\right)_B$ has exactly one occurrence of the indeterminate $\mathsf{x}_B$. From this, it then follows that $s_{B^i} \in \mathsf{Term}^c\left(\SigmaJ\left(M, \mathsf{x}_{B^i}\right)\right)_{B^i}$ has exactly one occurrence of the indeterminate $\mathsf{x}_{B^i}$ (because distinct occurrences of $\mathsf{x}_{B^i}$ in $s_{B^i}$ correspond to distinct occurrences of $\mathsf{x}_B$ in $\theta^*\left(s_{B^i}\right)$). More precisely, because $\theta^*\left(s_{B^i}\right)$ has exactly one occurrence of $\mathsf{x}_B$, it follows that $\rho_{M^i}^B\left(\theta^*\left(s_{B^i}\right)\right)$ has exactly one occurrence of $\mathsf{x}_{B^i}$. But by Lemma \ref{alphafreevariantlemma} we know that $\rho_{M^i}^B\left(\theta^*\left(s_{B^i}\right)\right) \equiv s_{B^i}^{-\alpha}$, and so the $\alpha$-free variant $s_{B^i}^{-\alpha}$ of $s_{B^i}$ has exactly one occurrence of $\mathsf{x}_{B^i}$, which implies that $s_{B^i}$ has exactly one occurrence of $\mathsf{x}_{B^i}$. 

Now consider $\theta\left(s_{B^i}\right) \in \mathsf{Term}^c\left(\Sigma\left(M^i, \overline{\mathsf{x}^B_{\mathsf{Cod}(i)}}\right)\right)$. Since $s_{B^i}$ has exactly one occurrence of $\mathsf{x}_{B^i}$, it follows that $\theta\left(s_{B^i}\right)$ has exactly one indeterminate from $\Sigma\left(M^i, \overline{\mathsf{x}^B_{\mathsf{Cod}(i)}}\right)$, and moreover the subscript of this indeterminate will be an endomorphism of $i$. We thus define $\psi_B(i) : i \to i$ to be this endomorphism. In other words, we define $\psi_B(i) : i \to i$ so that $\mathsf{x}_{\psi_B(i)}^B$ is the unique indeterminate occurring in $\theta\left(s_{B^i}\right)$. We now prove:

\begin{claim}
$\psi_B(i) : i \to i$ is an isomorphism. 
\end{claim}
\begin{proof}
From the proof that $\gamma_i := (\left[\theta^*\left(s_{C^i}\right)\right])_{C \in \Sigma} \in G_{\T}\left(M^i\right)$, it follows that $\gamma_i^{-1} = \left(\left[\theta^*\left(s_{C^i}^{-1}\right)\right]\right)_{C \in \Sigma} \in G_{\T}\left(M^i\right)$. Then, as for $s_{B^i}$, it follows that $s_{B^i}^{-1}$ has exactly one occurrence of the indeterminate $\mathsf{x}_{B^i}$, and so we define $\psi_B(i)^{-1} : i \to i$ from $\theta\left(s_{B^i}^{-1}\right)$ in the same way that we defined $\psi_B(i)$ from $\theta\left(s_{B^i}\right)$. We now need to verify that $\psi_B(i)$ and $\psi_B(i)^{-1}$ are in fact mutually inverse endomorphisms of $i$. First, we know that
\[ \TJ(M, \mathsf{x}_{B^i}) \vdash s_{B^i}\left[s_{B^i}^{-1}/\mathsf{x}_{B^i}\right] = \mathsf{x}_{B^i} = s_{B^i}^{-1}[s_{B^i}/\mathsf{x}_{B^i}]. \] By Lemma \ref{mainalphalemma}, there is an $\alpha$-restricted variant $s_{B^i}\left[s_{B^i}^{-1}/\mathsf{x}_{B^i}\right]'$ of $s_{B^i}\left[s_{B^i}^{-1}/\mathsf{x}_{B^i}\right]$, so we obtain $\TJ(M, \mathsf{x}_{B^i}) \vdash s_{B^i}\left[s_{B^i}^{-1}/\mathsf{x}_{B^i}\right]' = \mathsf{x}_{B^i}$. Then by Proposition \ref{thetaprop} we have
\[ \T\left(M^i, \overline{\mathsf{x}^B_{\mathsf{Cod}(i)}}\right) \vdash \theta\left(s_{B^i}\left[s_{B^i}^{-1}/\mathsf{x}_{B^i}\right]'\right) = \theta(\mathsf{x}_{B^i}) \equiv \mathsf{x}_{\mathsf{id}_i}^B. \] 
Since the unique indeterminate that $\theta\left(s_{B^i}\right) \in \mathsf{Term}^c\left(\Sigma\left(M^i, \overline{\mathsf{x}^B_{\mathsf{Cod}(i)}}\right)\right)$ contains is $\mathsf{x}_{\psi_B(i)}^B$, it follows by Lemma \ref{lastthetalemma} that 
\[ \theta\left(s_{B^i}\left[s_{B^i}^{-1}/\mathsf{x}_{B^i}\right]'\right) \equiv \theta\left(s_{B^i}\right)\left[\theta\left(\left(s_{B^i}^{-1}\right)^{\psi_B(i)}\right)/\mathsf{x}_{\psi_B(i)}^B\right]. \]
So then the unique indeterminate that occurs in $\theta\left(s_{B^i}\left[s_{B^i}^{-1}/\mathsf{x}_{B^i}\right]'\right)$ will be the unique indeterminate that occurs in $\theta\left(\left(s_{B^i}^{-1}\right)^{\psi_B(i)}\right)$. But since the unique indeterminate that occurs in $\theta\left(s_{B^i}^{-1}\right)$ is $\mathsf{x}_{\psi_B(i)^{-1}}^B$, it follows by Lemma \ref{theta^flemma} that the unique indeterminate that occurs in $\theta\left(\left(s_{B^i}^{-1}\right)^{\psi_B(i)}\right)$ is $\mathsf{x}_{\psi_B(i) \circ \psi_B(i)^{-1}}^B$. In summary, the unique indeterminate that occurs in $\theta\left(s_{B^i}\left[s_{B^i}^{-1}/\mathsf{x}_{B^i}\right]'\right)$ is $\mathsf{x}_{\psi_B(i) \circ \psi_B(i)^{-1}}^B$.

Now, since $\T\left(M^i\right)$ is non-trivial for the sort $B$ and \[ \T\left(M^i, \overline{\mathsf{x}^B_{\mathsf{Cod}(i)}}\right) \vdash \theta\left(s_{B^i}\left[s_{B^i}^{-1}/\mathsf{x}_{B^i}\right]'\right) = \mathsf{x}_{\mathsf{id}_i}^B, \] it follows by Lemma \ref{occurrenceofidentityconstant} that $\mathsf{x}_{\mathsf{id}_i}^B$ occurs in $\theta\left(s_{B^i}\left[s_{B^i}^{-1}/\mathsf{x}_{B^i}\right]'\right)$. But then we must have $\mathsf{x}_{\psi_B(i) \circ \psi_B(i)^{-1}}^B \equiv \mathsf{x}_{\mathsf{id}_i}^B$, which forces $\psi_B(i) \circ \psi_B(i)^{-1} = \mathsf{id}_i$. The proof that $\psi_B(i)^{-1} \ \circ \ \psi_B(i) = \mathsf{id}_i$ is analogous, which completes the proof that $\psi_B(i) : i \to i$ is an isomorphism.
\end{proof}

Next, we prove:

\begin{claim}
$\psi_B$ is a natural automorphism of $\mathsf{Id}_{\J^M_B}$. 
\end{claim}
\begin{proof}
Let $i, k \in \ob\J^M_B$, which means that the theories $\T\left(M^i\right)$ and $\T\left(M^k\right)$ are both non-trivial for the sort $B$, and let $h : i \to k$ be an arbitrary morphism of $\J$. We must show that $h \circ \psi_B(i) = \psi_B(k) \circ h : i \to k$. We know that $\alpha_h^B : B^i \to B^k$ is a function symbol of $\SigmaJ$, so because $([s_{C^j}])_{j, C} \in G_{\TJ}(M)$, it follows that $([s_{C^j}])_{j, C}$ commutes generically with this function symbol, which means that
\[ \TJ(M, \mathsf{x}_{B^i}) \vdash \alpha_h^B\left(s_{B^i}\right) = s_{B^k}\left[\alpha_h^B(\mathsf{x}_{B^i})/\mathsf{x}_{B^k}\right]. \] By Lemma \ref{mainalphalemma}, the righthand term in the above equation has an $\alpha$-restricted variant$\ s_{B^k}\left[\alpha_h^B(\mathsf{x}_{B^i})/\mathsf{x}_{B^k}\right]'$, and by Lemma \ref{firstalphalemma}, since $s_{B^i} : B^i$ is $\alpha$-restricted and $\TJ(M, \mathsf{x}_{B^i}) \vdash s_{B^i} \downarrow$, we know that $s_{B^i}^h : B^k$ is an $\alpha$-restricted term with $\TJ(M, \mathsf{x}_{B^i}) \vdash \alpha_h^B\left(s_{B^i}\right) = s_{B^i}^h$. Altogether, we then have
\[ \TJ(M, \mathsf{x}_{B^i}) \vdash s_{B^i}^h = s_{B^k}\left[\alpha_h^B(\mathsf{x}_{B^i})/\mathsf{x}_{B^k}\right]', \] with both terms $\alpha$-restricted. By Proposition \ref{thetaprop}, we then obtain
\[ \T\left(M^k, \overline{\mathsf{x}^B_{\mathsf{Cod}(k)}}\right) \vdash \theta\left(s_{B^i}^h\right) = \theta\left(s_{B^k}\left[\alpha_h^B(\mathsf{x}_{B^i})/\mathsf{x}_{B^k}\right]'\right). \] Since the unique indeterminate that occurs in $\theta\left(s_{B^i}\right)$ is $\mathsf{x}_{\psi_B(i)}^B$, it follows by Lemma \ref{theta^flemma} that the unique indeterminate that occurs in $\theta\left(s_{B^i}^h\right)$ is $\mathsf{x}_{h \circ \psi_B(i)}^B$. Also, we know by Lemma \ref{lastthetalemma} that 
\[ \theta\left(s_{B^k}\left[\alpha_h^B(\mathsf{x}_{B^i})/\mathsf{x}_{B^k}\right]'\right) \equiv \theta\left(s_{B^k}\right)\left[\theta\left(\alpha_h^B(\mathsf{x}_{B^i})^{\psi_B(k)}\right)/\mathsf{x}_{\psi_B(k)}^B\right], \] since $\mathsf{x}_{\psi_B(k)}^B$ is the unique indeterminate that occurs in $\theta\left(s_{B^k}\right)$. But (by the proof of Lemma \ref{firstalphalemma}) we have 
\[ \theta\left(\alpha_h^B(\mathsf{x}_{B^i})^{\psi_B(k)}\right) \equiv \theta\left(\alpha_{\psi_B(k) \circ h}^B(\mathsf{x}_{B^i})\right) \equiv \mathsf{x}_{\psi_B(k) \circ h}^B, \] which means that the unique indeterminate that occurs in $\theta\left(s_{B^k}\left[\alpha_h^B(\mathsf{x}_{B^i})/\mathsf{x}_{B^k}\right]'\right)$ is $\mathsf{x}_{\psi_B(k) \circ h}^B$. Now, suppose towards a contradiction that $h \circ \psi_B(i) \neq \psi_B(k) \circ h$. Then we would have $\mathsf{x}_{h \circ \psi_B(i)}^B \not\equiv \mathsf{x}_{\psi_B(k) \circ h}^B$, since distinct morphisms with codomain $k$ correspond to distinct indeterminates in $\Sigma\left(M^k, \overline{\mathsf{x}^B_{\mathsf{Cod}(k)}}\right)$. By the preceding discussion, it would then follow that in the equation
\[ \T\left(M^k, \overline{\mathsf{x}^B_{\mathsf{Cod}(k)}}\right) \vdash \theta\left(s_{B^i}^h\right) = \theta\left(s_{B^k}\left[\alpha_h^B(\mathsf{x}_{B^i})/\mathsf{x}_{B^k}\right]'\right), \] i.e.
\[ \T\left(M^k, \overline{\mathsf{x}^B_{\mathsf{Cod}(k)}}\right) \vdash \theta\left(s_{B^i}^h\right) = \theta\left(s_{B^k}\right)\left[\mathsf{x}_{\psi_B(k) \circ h}^B/\mathsf{x}_{\psi_B(k)}^B\right], \]
the two terms have no indeterminates in common. From the previous line, we can infer  
\[ \T\left(M^k, \mathsf{x}_{h \circ \psi_B(i)}^B, \mathsf{x}_{\psi_B(k) \circ h}^B\right) \vdash \theta\left(s_{B^i}^h\right) = \theta\left(s_{B^k}\right)\left[\mathsf{x}_{\psi_B(k) \circ h}^B/\mathsf{x}_{\psi_B(k)}^B\right] \] by \cite[Lemma 5.1.30]{thesis}. Now let $y, y'$ be distinct variables of sort $B$. Then by \cite[Theorem 10]{Horn}, we may conclude
\[ \T\left(M^k\right) \vdash^{y, y'} \theta\left(s_{B^i}^h\right)\left[y/\mathsf{x}_{h \circ \psi_B(i)}^B\right] = \theta\left(s_{B^k}\right)\left[y'/\mathsf{x}_{\psi_B(k)}^B\right], \tag{$*$} \] since the indeterminates $\mathsf{x}_{h \circ \psi_B(i)}^B, \mathsf{x}_{\psi_B(k) \circ h}^B : B$ are distinct. Now, in the proof of the claim that $\psi_B(k)$ is an isomorphism we showed that
\[ \theta\left(s_{B^k}\left[s_{B^k}^{-1}/\mathsf{x}_{B^k}\right]'\right) \equiv \theta\left(s_{B^k}\right)\left[\theta\left(\left(s_{B^k}^{-1}\right)^{\psi_B(k)}\right)/\mathsf{x}_{\psi_B(k)}^B\right]. \] Hence, by substituting $\theta\left(\left(s_{B^k}^{-1}\right)^{\psi_B(k)}\right)$ for $y'$ in ($*$), we obtain
\[ \T\left(M^k, \mathsf{x}_{\mathsf{id}_k}^B\right) \vdash^{y} \theta\left(s_{B^i}^h\right)\left[y/\mathsf{x}_{h \circ \psi_B(i)}^B\right] = \theta\left(s_{B^k}\right)\left[\theta\left(\left(s_{B^k}^{-1}\right)^{\psi_B(k)}\right)/\mathsf{x}_{\psi_B(k)}^B\right] \equiv \theta\left(s_{B^k}\left[s_{B^k}^{-1}/\mathsf{x}_{B^k}\right]'\right).  \] But we also know (from the same proof) that 
\[ \T\left(M^k, \mathsf{x}_{\mathsf{id}_k}^B\right) \vdash \theta\left(s_{B^k}\left[s_{B^k}^{-1}/\mathsf{x}_{B^k}\right]'\right) = \mathsf{x}_{\mathsf{id}_k}^B, \] so we finally obtain
\[ \T\left(M^k, \mathsf{x}_{\mathsf{id}_k}^B\right) \vdash^{y} \theta\left(s_{B^i}^h\right)\left[y/\mathsf{x}_{h \circ \psi_B(i)}^B\right] = \mathsf{x}_{\mathsf{id}_k}^B. \] Since $\mathsf{x}_{\mathsf{id}_k}^B$ does not appear in $\theta\left(s_{B^i}^h\right)\left[y/\mathsf{x}_{h \circ \psi_B(i)}^B\right]$, it then follows from \cite[Theorem 10]{Horn} that if $y' : B$ is a variable distinct from $y$, then
\[ \T\left(M^k\right) \vdash^{y, y'} \theta\left(s_{B^i}^h\right)\left[y/\mathsf{x}_{h \circ \psi_B(i)}^B\right] = y', \] and $y'$ does not appear in $\theta\left(s_{B^i}^h\right)\left[y/\mathsf{x}_{h \circ \psi_B(i)}^B\right]$. But then if $y'' : B$ is a variable distinct from both $y$ and $y'$, we also obtain
\[ \T\left(M^k\right) \vdash^{y, y''} \theta\left(s_{B^i}^h\right)\left[y/\mathsf{x}_{h \circ \psi_B(i)}^B\right] = y''. \] Finally, we deduce that $\T\left(M^k\right) \vdash^{y', y''} y' = y''$, which contradicts the assumption that $\T\left(M^k\right)$ is non-trivial for the sort $B$. So we must have $h \circ \psi_B(i) = \psi_B(k) \circ h$, as desired. This completes the argument that $\psi_B$ is a natural automorphism of $\mathsf{Id}_{\J^M_B}$.
\end{proof} 

\noindent To complete the proof that $\psi \in \mathsf{Aut}(\mathsf{Id}_\J)^M$, we must also verify: 
\begin{claim}
If $g : B_1 \times \ldots \times B_n \to B$ is a function symbol of $\Sigma$ with $n \geq 1$, then for any $i \in \ob\J$ and $1 \leq m \leq n$ such that $g^{M^i}$ is non-degenerate in position $m$ we have $\psi_{B_m}(i) = \psi_B(i) : i \to i$. 
\end{claim}
\begin{proof}
For simplicity, we will let $g : A \times B \to C$ be a binary function symbol of $\Sigma$ with $A \neq B$, and let $i \in \ob\J$. Suppose that
$g^{M^i}$ is non-degenerate in position $1$ (the argument for position $2$ being analogous), which means that 
\[ \T\left(M^i\right) \nvdash^{y_1, y_1', y_2} g(y_1, y_2) = g(y_1', y_2) \] for pairwise distinct variables $y_1, y_1' : A, y_2 : B$. Then (as remarked in the definition of $\mathsf{Aut}(\mathsf{Id}_\J)^M$) this implies that $\T\left(M^i\right)$ is non-trivial for the sort $C$, so that $i \in \ob\J^M_C$ and hence $\psi_{A}(i), \psi_C(i) : i \to i$ are defined. We must now show that $\psi_{A}(i) = \psi_C(i) : i \to i$. If $g$ is \emph{not} totally defined in $\T$, then the assumption that $\T$ has single-sorted non-total operations implies that $A = B = C$, which obviously entails the desired result. So suppose that $g$ \emph{is} totally defined in $\T$. If the sorts $A$ and $C$ are identical, then the desired result trivially follows, so suppose that $A \neq C$. Suppose towards a contradiction that $\psi_{A}(i) \neq \psi_C(i) : i \to i$. Since $([s_{C^j}])_{j, C} \in G_{\TJ}(M)$, it follows that $([s_{C^j}])_{j, C}$ commutes generically with the function symbol $g^i : A^i \times B^i \to C^i$ of $\SigmaJ$. Since $g$ is totally defined in $\T$, this entails that
\[ \TJ(M, \mathsf{x}_{A^i}, \mathsf{x}_{B^i}) \vdash g^i(s_{A^i}, s_{B^i}) = s_{C^i}\left[g^i(\mathsf{x}_{A^i}, \mathsf{x}_{B^i})/\mathsf{x}_{C^i}\right]. \] Let $s_{C^i}\left[g^i(\mathsf{x}_{A^i}, \mathsf{x}_{B^i})/\mathsf{x}_{C^i}\right]'$ be the $\alpha$-restricted variant of $s_{C^i}\left[g^i(\mathsf{x}_{A^i}, \mathsf{x}_{B^i})/\mathsf{x}_{C^i}\right]$ obtained from (a simple extension of) Lemma \ref{mainalphalemma}. Then by (simple extensions of) Lemma \ref{mainalphalemma} and Proposition \ref{thetaprop} and the definition of $\theta$ we have 
\[ \T\left(M^i, \overline{\mathsf{x}^A_{\mathsf{Cod}(i)}}, \overline{\mathsf{x}^B_{\mathsf{Cod}(i)}}\right) \vdash g\left(\theta(s_{A^i}), \theta\left(s_{B^i}\right)\right) = \theta\left(s_{C^i}[g^i(\mathsf{x}_{A^i}, \mathsf{x}_{B^i})/\mathsf{x}_{C^i}]'\right). \] By a simple extension of Lemma \ref{lastthetalemma}, we have
\[ \theta\left(s_{C^i}\left[g^i(\mathsf{x}_{A^i}, \mathsf{x}_{B^i})/\mathsf{x}_{C^i}\right]'\right) \equiv \theta\left(s_{C^i}\right)\left[\theta\left(g^i(\mathsf{x}_{A^i}, \mathsf{x}_{B^i})^{\psi_C(i)}\right)/\mathsf{x}_{\psi_C(i)}^C\right]. \] We also have (by a simple extension of Lemma \ref{firstalphalemma})
\begin{align*}
\theta\left(g^i(\mathsf{x}_{A^i}, \mathsf{x}_{B^i})^{\psi_C(i)}\right)	&\equiv \theta\left(g^i\left(\mathsf{x}_{A^i}^{\psi_C(i)}, \mathsf{x}_{B^i}^{\psi_C(i)}\right)\right) \\
								&\equiv \theta\left(g^i\left(\alpha_{\psi_C(i)}^A(\mathsf{x}_{A^i}), \alpha_{\psi_C(i)}^B(\mathsf{x}_{B^i})\right)\right) \\
								&\equiv g\left(\theta\left(\alpha_{\psi_C(i)}^A(\mathsf{x}_{A^i}), \theta(\alpha_{\psi_C(i)}^B(\mathsf{x}_{B^i})\right)\right) \\
								&\equiv g\left(\mathsf{x}_{\psi_C(i)}^A, \mathsf{x}_{\psi_C(i)}^B\right).
\end{align*}
From this, we obtain
\[ \theta\left(s_{C^i}\left[g^i(\mathsf{x}_{A^i}, \mathsf{x}_{B^i})/\mathsf{x}_{C^i}\right]'\right) \equiv \theta\left(s_{C^i}\right)\left[g\left(\mathsf{x}_{\psi_C(i)}^A, \mathsf{x}_{\psi_C(i)}^B\right)/\mathsf{x}_{\psi_C(i)}^C\right]. \] Finally, we have
\[ \T\left(M^i, \overline{\mathsf{x}^A_{\mathsf{Cod}(i)}}, \overline{\mathsf{x}^B_{\mathsf{Cod}(i)}}\right) \vdash g\left(\theta(s_{A^i}), \theta\left(s_{B^i}\right)\right) = \theta\left(s_{C^i}\right)\left[g\left(\mathsf{x}_{\psi_C(i)}^A, \mathsf{x}_{\psi_C(i)}^B\right)/\mathsf{x}_{\psi_C(i)}^C\right]. \] Now, we know that $\mathsf{x}_{\psi_A(i)}^A$ is the unique indeterminate occurring in $\theta(s_{A^i})$, and that $\mathsf{x}_{\psi_C(i)}^C$ is the unique indeterminate occurring in $\theta\left(s_{C^i}\right)$. Because of our assumption that $\psi_A(i) \neq \psi_C(i)$, it follows that $\mathsf{x}_{\psi_A(i)}^A \not\equiv \mathsf{x}_{\psi_C(i)}^A \in \Sigma\left(M^i, \overline{\mathsf{x}^A_{\mathsf{Cod}(i)}}\right)$. For the remainder of the argument, we will assume that $\T\left(M^i\right)$ is non-trivial for the sort $B$ as well, so that $i \in \ob\J^M_B$ and $\psi_B(i) : i \to i$ is defined, and we will also assume that $\psi_B(i) = \psi_C(i)$ (so that $\mathsf{x}_{\psi_B(i)}^B \equiv \mathsf{x}_{\psi_C(i)}^B$). Without these assumptions, the required argument is a simpler version of the one we are about to give.

So, in the above equation, we can substitute $\theta\left(\left(s_{A^i}^{-1}\right)^{\psi_A(i)}\right)$ for $\mathsf{x}_{\psi_A(i)}^A$ in the lefthand term, and $\theta\left(\left(s_{B^i}^{-1}\right)^{\psi_B(i)}\right)$ for $\mathsf{x}_{\psi_B(i)}^B \equiv \mathsf{x}_{\psi_C(i)}^B$ in both terms, and we obtain
\[ \T\left(M^i, \overline{\mathsf{x}^A_{\mathsf{Cod}(i)}}, \overline{\mathsf{x}^B_{\mathsf{Cod}(i)}}\right) \vdash g\left(\theta(s_{A^i})\left[\theta\left(\left(s_{A^i}^{-1}\right)^{\psi_A(i)}\right)/\mathsf{x}_{\psi_A(i)}^A\right], \theta\left(s_{B^i}\right)\left[\theta\left(\left(s_{B^i}^{-1}\right)^{\psi_B(i)}\right)/\mathsf{x}_{\psi_B(i)}^B\right]\right) \] \[ = \theta\left(s_{C^i}\right)\left[g\left(\mathsf{x}_{\psi_C(i)}^A, \theta\left(\left(s_{B^i}^{-1}\right)^{\psi_B(i)}\right)\right)/\mathsf{x}_{\psi_C(i)}^C\right] \] (note that $A \neq B$ implies $\mathsf{x}_{\psi_A(i)}^A \not\equiv \mathsf{x}_{\psi_B(i)}^B$). Earlier in the proof of the proposition, we saw that
\[ \T\left(M^i, \overline{\mathsf{x}^A_{\mathsf{Cod}(i)}}\right) \vdash \theta(s_{A^i})\left[\theta\left(\left(s_{A^i}^{-1}\right)^{\psi_A(i)}\right)/\mathsf{x}_{\psi_A(i)}^A\right] = \mathsf{x}_{\mathsf{id}_i}^A \] and
\[ \T\left(M^i, \overline{\mathsf{x}^B_{\mathsf{Cod}(i)}}\right) \vdash \theta\left(s_{B^i}\right)\left[\theta\left(\left(s_{B^i}^{-1}\right)^{\psi_B(i)}\right)/\mathsf{x}_{\psi_B(i)}^B\right] = \mathsf{x}_{\mathsf{id}_i}^B, \] so we obtain
\[ \T\left(M^i, \overline{\mathsf{x}^A_{\mathsf{Cod}(i)}}, \overline{\mathsf{x}^B_{\mathsf{Cod}(i)}}\right) \vdash g\left(\mathsf{x}_{\mathsf{id}_i}^A, \mathsf{x}_{\mathsf{id}_i}^B\right) = \theta\left(s_{C^i}\right)\left[g\left(\mathsf{x}_{\psi_C(i)}^A, \theta\left(\left(s_{B^i}^{-1}\right)^{\psi_B(i)}\right)\right)/\mathsf{x}_{\psi_C(i)}^C\right]. \]
We can also repeat the above argument to show that 
\[ \T\left(M^i, \overline{\mathsf{x}^A_{\mathsf{Cod}(i)}}, \overline{\mathsf{x}^B_{\mathsf{Cod}(i)}},  {\mathsf{x}_{\mathsf{id}_i}^A}'\right) \vdash g\left({\mathsf{x}_{\mathsf{id}_i}^A}', \mathsf{x}_{\mathsf{id}_i}^B\right) = \theta\left(s_{C^i}\right)\left[g\left(\mathsf{x}_{\psi_C(i)}^A, \theta\left(\left(s_{B^i}^{-1}\right)^{\psi_B(i)}\right)\right)/\mathsf{x}_{\psi_C(i)}^C\right], \] where ${\mathsf{x}_{\mathsf{id}_i}^A}' \notin \Sigma\left(M^i, \overline{\mathsf{x}^A_{\mathsf{Cod}(i)}}, \overline{\mathsf{x}^B_{\mathsf{Cod}(i)}}\right)$ is a new constant of sort $A$. So then we obtain
\[ \T\left(M^i, \overline{\mathsf{x}^A_{\mathsf{Cod}(i)}}, \overline{\mathsf{x}^B_{\mathsf{Cod}(i)}},  {\mathsf{x}_{\mathsf{id}_i}^A}'\right) \vdash g\left(\mathsf{x}_{\mathsf{id}_i}^A, \mathsf{x}_{\mathsf{id}_i}^B\right) = g\left({\mathsf{x}_{\mathsf{id}_i}^A}', \mathsf{x}_{\mathsf{id}_i}^B\right). \] By (a slight variation of) \cite[Lemma 5.1.30]{thesis}, it then follows that 
\[ \T\left(M^i, \x_{\id_i}^A, {\x_{\id_i}^B, \x_{\id_i}^A}'\right) \vdash g\left(\mathsf{x}_{\mathsf{id}_i}^A, \mathsf{x}_{\mathsf{id}_i}^B\right) = g\left({\mathsf{x}_{\mathsf{id}_i}^A}', \mathsf{x}_{\mathsf{id}_i}^B\right). \] By \cite[Theorem 10]{Horn} again, it follows that if $y, y' : A$ are distinct variables and $z : B$ is a variable, then $\T\left(M^i\right) \vdash^{y, y', z} g(y, z) = g(y', z)$, which contradicts the assumption that $g^{M^i}$ is non-degenerate in position $1$. This contradiction implies that we must have $\psi_A(i) = \psi_C(i) : i \to i$, as desired.
\end{proof} 
Hence, we may finally conclude that $\psi \in \mathsf{Aut}(\mathsf{Id}_\J)^M$, and therefore $(\gamma, \psi) \in \left(\prod_i G_\T\left(M^i\right)\right)^\J \times \mathsf{Aut}(\mathsf{Id}_\J)^M$. It now remains to show that $\beta_M(\gamma, \psi) = ([s_{C^j}])_{j, C} \in G_{\TJ}(M)$, which is not too difficult; we refer the reader to the end of the proof of \cite[Proposition 5.1.47]{thesis} for the verification. 
\end{proof}

One can now straightforwardly show as in \cite[Definition 5.1.55]{thesis} that the assignment
\[ M \mapsto \left(\prod_i G_\T\left(M^i\right)\right)^\J \times \mathsf{Aut}(\mathsf{Id}_\J)^M = \mathsf{lim}\left(G_\T \circ F^M\right) \times \mathsf{Aut}(\mathsf{Id}_\J)^M \in    \mathsf{Group} \] (see Lemma \ref{grouptheorylemma}) is functorial, i.e. that there is a canonical functor $G_{\TJ}^* : \TJmod \to \Group$ with this action on objects. We therefore obtain the following theorem, whose proof may be found in \cite[Theorem 5.1.56]{thesis}:
 
\begin{theo}
\label{functortheoriestheorem}
Let $\T$ be a quasi-equational theory with single-indeterminate isotropy and single-sorted non-total operations, and let $\J$ be a small index category. Then the family $\left(\beta_M\right)_{M \in \TJmod}$ of group isomorphisms is natural in $M$, i.e. we have a natural isomorphism $\beta : G_{\TJ}^* \xrightarrow{\sim} G_{\TJ} : \PTJmod \to \mathsf{Group}$. \qed
\end{theo}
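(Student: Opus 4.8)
The plan is to check that $\beta := (\beta_M)_{M \in \TJmod}$ is natural, i.e. that for every morphism $h : M \to N$ of $\TJ$-models the square
\[\begin{tikzcd}[ampersand replacement=\&, row sep = large, column sep = large]
G_{\TJ}^*(M) \arrow{r}{\beta_M}\arrow{d}[swap]{G_{\TJ}^*(h)} \& G_{\TJ}(M)\arrow{d}{G_{\TJ}(h)} \\
G_{\TJ}^*(N)\arrow{r}[swap]{\beta_N} \& G_{\TJ}(N)
\end{tikzcd}\]
commutes; since each $\beta_M$ is already a group isomorphism by Propositions \ref{grouphomexists}, \ref{betaisinjective}, and \ref{betaissurjective}, and both $G_{\TJ}^*$ and $G_{\TJ}$ are functors, this is all that remains. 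First I would make the maps explicit. Under Proposition \ref{modelsarefunctors}, $h$ corresponds to a natural transformation $F^h : F^M \Rightarrow F^N$ whose component at $i \in \ob\J$ is the $\Sigma$-morphism $h^i := (h_{A^i})_{A \in \Sigma} : M^i \to N^i$ of Definition \ref{componenthoms}; so on the first factor $G_{\TJ}^*(h) = \mathsf{lim}(G_\T \circ F^h)$ acts by $(\gamma_i)_i \mapsto (G_\T(h^i)(\gamma_i))_i$, where $G_\T(h^i)$ is the substitution $[s_C^i] \mapsto [\rho_{h^i}^C(s_C^i)]$ induced by $h^i$, and on the second factor $G_{\TJ}^*(h)$ acts by restriction of natural automorphisms along the full inclusions $\J_B^N \hookrightarrow \J_B^M$. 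The map $G_{\TJ}(h)$ is induced by the theory morphism $\rho_h^{C^i} : \TJ(M, \x_{C^i}) \to \TJ(N, \x_{C^i})$ of \cite[Definition 2.2.17, Lemma 2.2.18]{thesis}, which fixes every function symbol of $\SigmaJ$ (all $\alpha_f^B$ and all $g^i$) and $\x_{C^i}$, and relabels each diagram constant $c_{A^i, a}^M$ to $c_{A^i, h_{A^i}(a)}^N$; concretely $G_{\TJ}(h)$ sends $([t_{C^i}])_{i, C} \mapsto ([\rho_h^{C^i}(t_{C^i})])_{i, C}$.

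Next I would chase a general $(\gamma, \psi)$, with $\gamma_i = ([s_C^i])_{C \in \Sigma}$, around the square one component $(i, C)$ at a time. When $\T(N^i)$ is non-trivial for $C$ — which also forces $\T(M^i)$ non-trivial for $C$, since triviality of the diagram theory of the source of a $\Sigma$-morphism implies triviality of that of the target — the clockwise route gives $[\rho_h^{C^i}(\rho_{M^i}^C(s_C^i)[\alpha_{\psi_C(i)}^C(\x_{C^i})/\x_{C^i}])]$, and as $\rho_h^{C^i}$ commutes with substitution and fixes $\alpha_{\psi_C(i)}^C$ and $\x_{C^i}$, this equals $[\rho_h^{C^i}(\rho_{M^i}^C(s_C^i))[\alpha_{\psi_C(i)}^C(\x_{C^i})/\x_{C^i}]]$; the counterclockwise route gives $[\rho_{N^i}^C(\rho_{h^i}^C(s_C^i))[\alpha_{\psi_C(i)}^C(\x_{C^i})/\x_{C^i}]]$ (restricting $\psi$ does not change the morphism $\psi_C(i)$). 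So the verification reduces to the \emph{strict} commutativity $\rho_h^{C^i} \circ \rho_{M^i}^C = \rho_{N^i}^C \circ \rho_{h^i}^C$ of signature morphisms $\Sigma(M^i, \x_C) \to \SigmaJ(N, \x_{C^i})$, checked on generators: on $\Sigma$ both composites equal $\rho^i$; both send $\x_C$ to $\x_{C^i}$; and both send a diagram constant $c_{A, a}^{M^i}$ to $c_{A^i, h_{A^i}(a)}^N$, using $h^i_A = h_{A^i}$ from Definition \ref{componenthoms}. The residual cases are triviality bookkeeping: whenever $\T(N^i)$ (or already $\T(M^i)$) is trivial for $C$, one uses that $\TJ(N, \x_{C^i})$ is then trivial for the sort $C^i$ to see that both routes of the square collapse to $[\x_{C^i}]$.

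Finally I would record the two well-definedness facts this presupposes: $G_\T(h^i)$ carries $\bigl(\prod_i G_\T(M^i)\bigr)^\J$ into $\bigl(\prod_i G_\T(N^i)\bigr)^\J$ by naturality of $G_\T$ together with functoriality of $\mathsf{lim}$ (Lemma \ref{grouptheorylemma}); and restriction along $\J_B^N \hookrightarrow \J_B^M$ carries $\Aut(\Id_\J)^M$ into $\Aut(\Id_\J)^N$, because if $g^{N^i}$ is non-degenerate in position $m$ then, pushing the equation of Definition \ref{degenerate} forward along the theory morphism induced by $h^i$, $g^{M^i}$ is non-degenerate in position $m$ as well, so the compatibility $\psi_{A_m}(i) = \psi_A(i)$ required of elements of $\Aut(\Id_\J)^N$ is inherited from $\psi \in \Aut(\Id_\J)^M$. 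I expect the only genuine friction to be this bookkeeping — keeping the triviality and non-degeneracy case splits straight, and noticing that the signature-morphism square holds literally rather than merely up to provable equality — since every other ingredient (substitution commuting with $\rho_h^{C^i}$, the relabelled constants matching up, and the group-homomorphism property, already in hand) is immediate from the definitions; once the componentwise identity is checked for all $i \in \ob\J$ and $C \in \Sigma_\Sort$, it follows that $\beta : G_{\TJ}^* \xrightarrow{\sim} G_{\TJ}$ is a natural isomorphism.
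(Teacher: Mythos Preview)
Your proposal is correct and is precisely the expected verification: the paper itself gives no argument for the theorem beyond deferring to \cite[Theorem 5.1.56]{thesis}, and what you have written is the standard naturality check, carried out componentwise and reduced to the strict equality of signature morphisms $\rho_h^{C^i} \circ \rho_{M^i}^C = \rho_{N^i}^C \circ \rho_{h^i}^C$ together with the triviality and non-degeneracy bookkeeping needed to see that $G_{\TJ}^*(h)$ is well-defined. Your handling of the case splits (in particular the direction of the inclusions $\J_B^N \hookrightarrow \J_B^M$ and the contrapositive argument for non-degeneracy) is accurate.
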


\begin{rmk}
\label{notsingleindeterminateisotropy}
{\em
We used the assumption that $\T$ has single-indeterminate isotropy in order to show in Proposition \ref{betaissurjective} that the injective group homomorphism 
\[ \beta_M : \left(\prod_i G_\T\left(M^i\right)\right)^\J \times \mathsf{Aut}(\mathsf{Id}_\J)^M \to G_{\TJ}(M) \] is also \emph{surjective} for each small category $\J$ and $M \in \PTmod$. In \cite[Remark 5.1.58]{thesis} we provide an example of a theory $\T$ without single-indeterminate isotropy, for which there is a small category $\J$ and a model $M$ of $\TJ$ for which $\beta_M$ is \emph{not} surjective. So Proposition \ref{betaissurjective} does \emph{not} hold in general for theories without single-indeterminate isotropy. 

We also used the assumption that $\T$ has single-sorted non-total operations in order to show that $\beta_M$ is surjective for each small category $\J$ and $M \in \PTmod$. Specifically, given $([s_{C^j}])_{j, C} \in G_{\TJ}(M)$ for $M \in \PTJmod$, we produced $(\gamma, \psi) \in \left(\prod_i G_\T\left(M^i\right)\right)^\J \times \mathsf{Aut}(\mathsf{Id}_\J)^M$ with $\beta_M(\gamma, \psi) = ([s_{C^j}])_{j, C}$, and we used the assumption that $\T$ has single-sorted non-total operations to show that $\psi \in \mathsf{Aut}(\mathsf{Id}_\J)^M$; more specifically, we used this assumption to show that if $g : A_1 \times \ldots \times A_n \to A$ is a function symbol of $\Sigma$ with $n \geq 1$ and $i \in \ob\J$ and $g^{M^i}$ is non-degenerate in position $1 \leq m \leq n$, then $\psi_{A_m}(i) = \psi_A(i) : i \xrightarrow{\sim} i$. In \cite[Remark 5.1.59]{thesis} we show that there is a theory $\T$ without single-sorted non-total operations for which there is a small category $\J$ and a model $M$ of $\TJ$ such that $\beta_M$ is \emph{not} surjective. So Proposition \ref{betaissurjective} does \emph{not} hold in general for theories that have multi-sorted operations that are not totally defined. \qed
}   
\end{rmk}

\section{Categorical characterization}
\label{categorical}

In this section, we will deduce a categorical characterization of the covariant isotropy group $\Z_{\Tmod^\J}$ of $\Tmod^\J$ from the logical characterization of $G_{\TJ}$ given in the previous section. Unless otherwise stated, $\T$ is an arbitrary quasi-equational theory and $\J$ is an arbitrary small index category. 

Recall that if $F : \J \to \Tmod$, then $\Dom(F)$ is the class of morphisms in $\Tmod^\J$ with domain $F$.

\begin{defn}
\label{determinedbyfunctor}
{\em Let $F : \J \to \PTmod$, and let $\pi = \left(\pi_\mu : \mathsf{cod}(\mu) \to \mathsf{cod}(\mu)\right)_{\mu \in \mathsf{Dom}(F)}$ be a $\mathsf{Dom}(F)$-indexed family of endomorphisms in $\PTmod^\J$. For each $i \in \ob\J$, let \[ \phi^i = \left(\phi^i_g : \mathsf{cod}(g) \xrightarrow{\sim} \mathsf{cod}(g)\right)_{g \in \mathsf{Dom}(F(i))} \in \Z_\T(F(i)). \] Finally, let $\psi = (\psi_B)_{B \in \Sigma} \in \mathsf{Aut}(\mathsf{Id}_\J)^{M^F}$ (where $M^F \in \PTJmod$ is the model of $\TJ$ corresponding to the functor $F$ as in Proposition \ref{modelsarefunctors}). 

We say that $\pi$ is \emph{determined by} $\psi \in \mathsf{Aut}(\mathsf{Id}_\J)^{M^F}$ and the family $\left(\phi^i\right)_{i \in \J} \in \prod_{i \in \J} \Z_\T(F(i))$ if the following holds for any morphism $\mu : F \to G$ in $\PTmod^\J$, any $B \in \Sigma_{\mathsf{Sort}}$, and any $k \in \ob\J$:
\begin{itemize}
\item If $k \notin \J^{M^F}_B$, i.e. if $\T\left(F(k)\right)$ is trivial for the sort $B$, then $\pi_\mu(k)_B = \mathsf{id} : G(k)_B \to G(k)_B$.

\item If $k \in \J^{M^F}_B$, i.e. if $\T\left(F(k)\right)$ is non-trivial for the sort $B$, then \[ \pi_\mu(k)_B = G(\psi_B(k)) \circ \phi^k_{\mu(k)} : G(k)_B \to G(k)_B \]
(where $\mu(k) : F(k) \to G(k)$ is a $\Sigma$-morphism). \qed
\end{itemize}
}
\end{defn}

\noindent Note that in the corresponding \cite[Definition 5.2.7]{thesis}, the last equation of Definition \ref{determinedbyfunctor} instead has the form $\pi_\mu(k)_B = \left(\phi^k_{\mu(k) \circ F(\psi_B(k))}\right)_B \circ G(\psi_B(k))_B$, but this is equivalent to our equation because
\[ \phi^k_{\mu(k) \circ F(\psi_B(k))} \circ G(\psi_B(k)) = \phi^k_{G(\psi_B(k)) \circ \mu(k)} \circ G(\psi_B(k)) = G(\psi_B(k)) \circ \phi^k_{\mu(k)} \] by naturality of $\mu : F \to G$ and the fact that $\phi^k \in \Z_\T(F(k))$.   

\begin{defn}
\label{Fcompatible}
{\em Let $F : \J \to \PTmod$. For each $i \in \ob\J$, let \[ \phi^i = \left(\phi^i_g : \mathsf{cod}(g) \xrightarrow{\sim} \mathsf{cod}(g)\right)_{g \in \mathsf{Dom}(F(i))} \in \Z_\T(F(i)). \] 
We say that the family $\left(\phi^i\right)_{i \in \J} \in \prod_{i \in \J} \Z_\T(F(i))$ is \emph{compatible} if for every morphism $f : i \to j$ in $\J$ and every $g \in \mathsf{Dom}(F(j))$ we have
\[ \phi^j_g = \phi^i_{g \circ F(f)} : \mathsf{cod}(g) \xrightarrow{\sim} \mathsf{cod}(g). \]
In other words, we say that $\left(\phi^i\right)_{i \in \J}$ is compatible if $\left(\phi^i\right)_{i \in \J} \in \mathsf{lim}(\Z_\T \circ F) \in \mathsf{Group}$. \qed }
\end{defn}

\noindent We now have the following categorical characterization of the covariant isotropy group of $\PTmod^\J$, where we write $\Z_{\TJ}$ in place of $\Z_{\Tmod^\J}$; the proof may be found in \cite[Corollary 5.2.9]{thesis}.

\begin{cor}
\label{categoricalcharfunctors}
Let $\T$ be a quasi-equational theory with single-indeterminate isotropy and single-sorted non-total operations, and let $\J$ be a small index category.
Let $F : \J \to \PTmod$, and let \[ \pi = \left(\pi_\mu : \mathsf{cod}(\mu) \to \mathsf{cod}(\mu)\right)_{\mu \in \mathsf{Dom}(F)} \] be a $\mathsf{Dom}(F)$-indexed family of endomorphisms in $\PTmod^\J$. Then $\pi \in \Z_{\TJ}(F)$ iff there is a (uniquely determined) compatible family $\left(\phi^i\right)_{i \in \J} \in \prod_{i \in \J} \Z_\T(F(i))$ and a (uniquely determined) element $\psi = (\psi_B)_{B \in \Sigma} \in \mathsf{Aut}(\mathsf{Id}_\J)^{M^F}$ such that $\pi$ is determined by $\psi$ and $\left(\phi^{i}\right)_{i \in \J}$. \qed
\end{cor}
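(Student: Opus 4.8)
The plan is to deduce this categorical characterization directly from the logical isomorphism $\beta : G_{\TJ}^* \xrightarrow{\sim} G_{\TJ}$ of Theorem \ref{functortheoriestheorem}, by transporting it along the isomorphism of categories $\Tmod^\J \cong \TJmod$ of Proposition \ref{modelsarefunctors} and the natural isomorphism $\Z_{\TJ} \cong G_{\TJ} : \TJmod \to \Group$ recalled in Section \ref{background}. I would fix $F : \J \to \Tmod$ with corresponding model $M^F \in \TJmod$, and recall that $(M^F)^i = F(i)$ for every $i \in \ob\J$, so that after transporting each factor along $\Z_\T \cong G_\T$ the group $\left(\prod_i G_\T\left((M^F)^i\right)\right)^\J$ of Lemma \ref{grouptheorylemma} becomes the limit group $\limit(\Z_\T \circ F)$. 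The next observation is that an element of $\limit(\Z_\T \circ F)$ is exactly a \emph{compatible} family $(\phi^i)_{i \in \J}$ in the sense of Definition \ref{Fcompatible}: since $\Z_\T$ is covariant and $\Z_\T(F(f))$ sends $\phi^i$ to the family $g \mapsto \phi^i_{g \circ F(f)}$, the defining limit condition $\Z_\T(F(f))(\phi^i) = \phi^j$ is precisely the compatibility equation $\phi^j_g = \phi^i_{g \circ F(f)}$. With these identifications in hand, Theorem \ref{functortheoriestheorem} already supplies, naturally in $F$, a group isomorphism $\Z_{\TJ}(F) \cong \limit(\Z_\T \circ F) \times \Aut(\Id_\J)^{M^F}$.

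It then remains to make this bijection explicit and to recognize it as the relation ``$\pi$ is determined by $\psi$ and $(\phi^i)_i$'' of Definition \ref{determinedbyfunctor}. Given $\pi \in \Z_{\TJ}(F)$, I would first pass to the corresponding element of $G_{\TJ}(M^F)$ via the recipe from Section \ref{background} (apply the component of $\pi$ at the canonical morphism $M^F \to M^F\la \x_{C^i}\ra$ and evaluate at $\x_{C^i}$), then apply $\beta_{M^F}^{-1}$ (Propositions \ref{betaisinjective} and \ref{betaissurjective}) to obtain a pair $(\gamma, \psi)$ with $\gamma = (\gamma_i)_i \in \left(\prod_i G_\T((M^F)^i)\right)^\J$ and $\psi \in \Aut(\Id_\J)^{M^F}$; transporting each $\gamma_i$ back along $\Z_\T \cong G_\T$ yields $\phi^i \in \Z_\T(F(i))$, and $\gamma$ lying in $\left(\prod_i G_\T((M^F)^i)\right)^\J$ becomes the compatibility of $(\phi^i)_i$. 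Finally I would use the explicit formula for $\beta_{M^F}$ from the proof of Proposition \ref{grouphomexists}, namely $\beta_{M^F}(\gamma,\psi)_{C^i} = \left[\rho_{M^i}^C(s_C^i)\left[\alpha_{\psi_C(i)}^C(\x_{C^i})/\x_{C^i}\right]\right]$ when $\T(F(i))$ is non-trivial for $C$, and $[\x_{C^i}]$ otherwise, and unwind the isotropy--syntax dictionary to see that for an arbitrary morphism $\mu : F \to G$ of $\Tmod^\J$ the term $\rho_{M^k}^B(s_B^k)$ computes the component $\phi^k_{\mu(k)}$ while the substitution by $\alpha_{\psi_B(k)}^B(\x_{B^k})$ corresponds to post-composition with $G(\psi_B(k))$; this gives $\pi_\mu(k)_B = G(\psi_B(k)) \circ \phi^k_{\mu(k)}$ when $k \in \J^{M^F}_B$ and $\pi_\mu(k)_B = \id$ otherwise, which is exactly Definition \ref{determinedbyfunctor}. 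Uniqueness of the pair follows from injectivity of $\beta_{M^F}$ (Proposition \ref{betaisinjective}), and the converse implication holds because a $\pi$ determined by a compatible family and an element of $\Aut(\Id_\J)^{M^F}$ is by construction the image of $(\gamma,\psi)$ under the composite isomorphism, and $(\gamma,\psi)$ lies in the domain of $\beta_{M^F}$, whose image is $G_{\TJ}(M^F)$ by Proposition \ref{grouphomexists}.

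The step I expect to be the main obstacle is this last translation, i.e. matching the syntactic formula for $\beta_{M^F}$ with the categorical composite $G(\psi_B(k)) \circ \phi^k_{\mu(k)}$: one must verify that substituting an indeterminate by $\alpha_{\psi_B(k)}^B(\x_{B^k})$ inside $\rho_{M^k}^B(s_B^k)$ corresponds semantically to composing with the $\T$-model automorphism obtained by applying $G$ to the $\J$-isomorphism $\psi_B(k) : k \to k$, and that this holds not merely at the universal morphisms $F \to F\la \x_{B^k}\ra$ but at \emph{every} outgoing morphism $\mu : F \to G$. I would handle this using the naturality of $\beta$ from Theorem \ref{functortheoriestheorem} together with the naturality of $\Z_\T \cong G_\T$, which reduces the verification to the universal case, where it holds by the construction of $\beta_{M^F}$ in Proposition \ref{grouphomexists}. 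The remaining bookkeeping --- matching the trivial-sort clauses, and reconciling the two equivalent forms of the defining equation recorded in the note after Definition \ref{determinedbyfunctor} --- is routine; full details are in \cite[Corollary 5.2.9]{thesis}.
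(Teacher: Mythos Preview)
Your proposal is correct and follows essentially the same route as the paper, which simply defers to \cite[Corollary 5.2.9]{thesis}: transport the isomorphism of Theorem \ref{functortheoriestheorem} along $\Tmod^\J \cong \TJmod$ and $\Z_\T \cong G_\T$, identify $\left(\prod_i G_\T((M^F)^i)\right)^\J$ with the compatible families of Definition \ref{Fcompatible}, and unwind $\beta_{M^F}$ to recover Definition \ref{determinedbyfunctor}. One small point worth making explicit in your write-up: the direct semantic unwinding of $\rho_{M^k}^B(s_B^k)[\alpha_{\psi_B(k)}^B(\x_{B^k})/\x_{B^k}]$ at a morphism $\mu$ naturally yields $(\phi^k_{\mu(k)})_B \circ G(\psi_B(k))_B$ rather than $G(\psi_B(k))_B \circ (\phi^k_{\mu(k)})_B$; these agree only after invoking compatibility of $(\phi^i)_i$ (applied to the endomorphism $\psi_B(k) : k \to k$, which gives $\phi^k_{\mu(k) \circ F(\psi_B(k))} = \phi^k_{\mu(k)}$) together with the equivalence recorded in the note after Definition \ref{determinedbyfunctor}. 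You already flag this reconciliation as routine bookkeeping, which it is, but it is the one place where compatibility is genuinely used in the translation and not merely carried along.
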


Before we give some specific applications of the general results that we have proven so far, we will first extract an important consequence of Theorem \ref{functortheoriestheorem} that does \emph{not} rely on the assumptions that $\T$ has single-indeterminate isotropy and single-sorted non-total operations, but which only applies to index categories $\J$ satisfying  a certain condition. Namely, we have the following consequence of (the proof of) Theorem \ref{functortheoriestheorem}, whose relatively straightforward proof can be found in \cite[Corollary 5.2.10]{thesis}:

\begin{cor}
\label{trivialisotropycorollary}
Let $\T$ be an \textbf{arbitrary} quasi-equational theory and $\J$ a small index category. If $M \in \PTJmod$ and $\J^M_B$ has only \textbf{trivial} endomorphisms for each $B \in \Sigma_{\mathsf{Sort}}$, then
\[ \beta_M : \left(\prod_i G_{\T}\left(M^i\right)\right)^\J \times \mathsf{Aut}(\mathsf{Id}_\J)^M \to G_{\TJ}(M) \] is a group isomorphism and in fact $G_{\TJ}(M) \cong \left(\prod_i G_{\T}\left(M^i\right)\right)^\J = \mathsf{lim}\left(G_\T \circ F^M\right)$. \qed
\end{cor}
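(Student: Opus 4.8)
The plan is to leverage the two main results of Section \ref{logical}, namely Proposition \ref{arbitrarytheoryinjectivegrouphom} and Theorem \ref{functortheoriestheorem}. From Proposition \ref{arbitrarytheoryinjectivegrouphom} we already know, for an arbitrary quasi-equational theory $\T$, that $\beta_M : \left(\prod_i G_{\T}\left(M^i\right)\right)^\J \times \mathsf{Aut}(\mathsf{Id}_\J)^M \to G_{\TJ}(M)$ is an injective group homomorphism for every $M \in \PTJmod$. So the only thing to prove is that, under the hypothesis that each $\J^M_B$ has only trivial endomorphisms, $\beta_M$ is also surjective. The key observation is that the surjectivity argument in the proof of Proposition \ref{betaissurjective} only used the assumptions of single-indeterminate isotropy and single-sorted non-total operations at the very end, in order to construct the natural automorphism $\psi$ and verify the compatibility condition of Definition \ref{automorphismsubgroup}; the construction of the compatible family $\gamma = (\gamma_i)_i \in \left(\prod_i G_\T(M^i)\right)^\J$ via $\gamma_i^C := \left[\theta^*(s_{C^i})\right]$ went through for arbitrary $\T$. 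Under our hypothesis, however, the only candidate for each $\psi_B$ is the identity natural automorphism of $\mathsf{Id}_{\J^M_B}$ (since $\psi_B(i)$ must be an endomorphism of $i$ in $\J^M_B$, hence $\mathsf{id}_i$), so the delicate parts of that argument become vacuous.

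Concretely, I would proceed as follows. First I would fix $M \in \PTJmod$ with each $\J^M_B$ having only trivial endomorphisms, and take an arbitrary element $([s_{C^i}])_{i,C} \in G_{\TJ}(M)$, assuming by Lemma \ref{mainalphalemma} that each $s_{C^i}$ is $\alpha$-restricted. Then I would run exactly the three claims in the proof of Proposition \ref{betaissurjective} — that each $\gamma_i$, defined by $\gamma_i^C := \left[\theta^*(s_{C^i})\right]$, is invertible, commutes generically with all function symbols of $\Sigma$, and reflects definedness — noting that none of those three claims invoked single-indeterminate isotropy or single-sorted non-total operations, so they hold for arbitrary $\T$. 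The same is true of the subsequent argument showing $\gamma \in \left(\prod_i G_\T(M^i)\right)^\J$, which relied only on Lemmas \ref{smallthetalemma}, \ref{secondtheta^*lemma}, and \ref{thetacommuteswiththeorymorphism} together with the generic commutation of $([s_{C^j}])_{j,C}$ with the $\alpha_f^B$. Next I would set $\psi_B := \mathsf{id}_{\mathsf{Id}_{\J^M_B}}$ for each $B \in \Sigma_\Sort$; since $\J^M_B$ has only trivial endomorphisms, $\psi := (\psi_B)_B$ trivially lies in $\mathsf{Aut}(\mathsf{Id}_\J)^M$ (the compatibility condition of Definition \ref{automorphismsubgroup} is automatically satisfied when all the $\psi_B(i)$ are identities). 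Finally I would verify $\beta_M(\gamma, \psi) = ([s_{C^i}])_{i,C}$: for each $i$ and $C$, if $\T(M^i)$ is trivial for $C$ then both sides are $[\mathsf{x}_{C^i}]$ by definition of $\beta_M$ and triviality, and otherwise $\beta_M(\gamma,\psi)_{C^i} = \left[\rho_{M^i}^C(\theta^*(s_{C^i}))\left[\alpha_{\mathsf{id}_i}^C(\mathsf{x}_{C^i})/\mathsf{x}_{C^i}\right]\right] = \left[\rho_{M^i}^C(\theta^*(s_{C^i}))\right]$ by Axiom \ref{thefunctortheory}.\ref{alphaidworks}, which equals $\left[s_{C^i}^{-\alpha}\right] = [s_{C^i}]$ by Lemma \ref{alphafreevariantlemma} (using that $s_{C^i}$ is $\alpha$-restricted and $i$-local, and that in the absence of nontrivial endomorphisms of $i$ in $\J^M_C$ its only $\alpha$-subterm $\alpha_f^C(\mathsf{x}_{C^i})$ with $f : i \to i$ would force $f = \mathsf{id}_i$, so $s_{C^i}$ is already $\alpha$-free — hence $s_{C^i}^{-\alpha} \equiv s_{C^i}$). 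This establishes surjectivity, and combined with Proposition \ref{arbitrarytheoryinjectivegrouphom} gives that $\beta_M$ is an isomorphism.

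For the final assertion that $G_{\TJ}(M) \cong \left(\prod_i G_\T(M^i)\right)^\J = \mathsf{lim}(G_\T \circ F^M)$, I would simply observe that under the hypothesis $\mathsf{Aut}(\mathsf{Id}_\J)^M$ is the trivial group — indeed $\prod_{B \in \Sigma_\Sort} \mathsf{Aut}(\mathsf{Id}_{\J^M_B})$ is trivial, since a natural automorphism of $\mathsf{Id}_{\J^M_B}$ has identity components whenever $\J^M_B$ has only trivial endomorphisms — so the direct factor $\mathsf{Aut}(\mathsf{Id}_\J)^M$ drops out, and the identification with $\mathsf{lim}(G_\T \circ F^M)$ is just the notation introduced after Lemma \ref{grouptheorylemma}.

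I do not expect any serious obstacle here: the entire content is a careful bookkeeping argument tracking which steps of the proof of Proposition \ref{betaissurjective} used the two special hypotheses on $\T$ and checking they are dispensable under the endomorphism-triviality hypothesis on $\J$. The only point requiring a little care is the verification that $s_{C^i}$ is genuinely $\alpha$-free (equivalently $s_{C^i}^{-\alpha} \equiv s_{C^i}$) when $\J^M_C$ has only trivial endomorphisms, so that $\beta_M(\gamma,\psi)$ reproduces the original element on the nose; this follows from the definition of $\alpha$-restricted term (Definition \ref{alpharestricted}) together with $i$-locality, since any $\alpha$-subterm must have the form $\alpha_f^C(\mathsf{x}_{C^i})$ with $\mathsf{dom}(f) = \mathsf{cod}(f) = i$ inside $\J^M_C$, forcing $f = \mathsf{id}_i$ and $\alpha_{\mathsf{id}_i}^C(\mathsf{x}_{C^i}) = \mathsf{x}_{C^i}$ modulo $\TJ(M,\mathsf{x}_{C^i})$.
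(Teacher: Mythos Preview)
Your proposal is correct and follows essentially the same approach as the paper, which explicitly describes the corollary as a consequence of the proof of Theorem \ref{functortheoriestheorem} (i.e., of the surjectivity argument in Proposition \ref{betaissurjective}) and defers the details to the thesis. One small imprecision: the claim that $s_{C^i}^{-\alpha} \equiv s_{C^i}$ \emph{syntactically} is not quite right, since $s_{C^i}$ may literally contain the subterm $\alpha_{\mathsf{id}_i}^C(\mathsf{x}_{C^i})$, which the $\alpha$-free variant replaces by $\mathsf{x}_{C^i}$; what you actually need (and what you correctly state elsewhere, including your ``modulo $\TJ(M,\mathsf{x}_{C^i})$'' remark) is the equality $[s_{C^i}^{-\alpha}] = [s_{C^i}]$ in $M\langle \mathsf{x}_{C^i}\rangle_{C^i}$, which follows from Axiom \ref{thefunctortheory}.\ref{alphaidworks}.
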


\noindent We also deduce the following categorical version of the previous corollary:

\begin{cor}
\label{cattrivialisotropycorollary1}
Let $\T$ be an \textbf{arbitrary} quasi-equational theory and $\J$ a small index category with only \textbf{trivial} endomorphisms. Let $F : \J \to \PTmod$, and let $\pi = \left(\pi_\mu : \mathsf{cod}(\mu) \to \mathsf{cod}(\mu)\right)_{\mu \in \mathsf{Dom}(F)}$ be a $\mathsf{Dom}(F)$-indexed family of endomorphisms in $\PTmod^\J$. Then $\pi \in \Z_{\TJ}(F)$ iff there is a (uniquely determined) compatible family $\left(\phi^i\right)_{i \in \J} \in \prod_{i \in \J} \Z_\T(F(i))$ such that $\pi$ is determined by $\left(\phi^{i}\right)_{i \in \J}$, in the sense that
\[ \pi_\mu(k) = \phi^k_{\mu(k)} : G(k) \to G(k) \] for every morphism $\mu : F \to G$ in $\PTmod^\J$ and every $k \in \ob\J$. \qed
\end{cor}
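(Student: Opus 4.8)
The plan is to deduce this corollary from Corollary \ref{categoricalcharfunctors} and Corollary \ref{trivialisotropycorollary} (together with its proof) by showing that, when $\J$ has only trivial endomorphisms, the group $\mathsf{Aut}(\mathsf{Id}_\J)^{M^F}$ collapses to the trivial group, so that the element $\psi$ appearing in Corollary \ref{categoricalcharfunctors} is forced to be the identity. First I would observe that for \emph{any} object $i$ of any subcategory $\J^{M^F}_B$, the component $\psi_B(i) : i \xrightarrow{\sim} i$ of a natural automorphism $\psi_B \in \mathsf{Aut}(\mathsf{Id}_{\J^{M^F}_B})$ is an endomorphism of $i$ \emph{in $\J$} (since $\J^{M^F}_B$ is a \emph{full} subcategory of $\J$), hence is the identity by the hypothesis on $\J$. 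Therefore each $\psi_B$ is the identity natural automorphism of $\mathsf{Id}_{\J^{M^F}_B}$, so $\psi = (\psi_B)_{B \in \Sigma}$ is the unit of $\mathsf{Aut}(\mathsf{Id}_\J)^{M^F}$. This is essentially the content already used in the proof of Corollary \ref{trivialisotropycorollary}, so I would simply cite \cite[Corollary 5.2.10]{thesis} for this point if preferred.

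Next I would invoke Corollary \ref{categoricalcharfunctors}. A subtlety is that Corollary \ref{categoricalcharfunctors} is stated under the running assumption that $\T$ has single-indeterminate isotropy and single-sorted non-total operations, whereas the present corollary asserts the analogous statement for an \emph{arbitrary} $\T$ (trading this for the hypothesis on $\J$). So rather than literally quoting Corollary \ref{categoricalcharfunctors}, I would instead run the argument directly through the group isomorphism $\beta_{M^F}$: by Corollary \ref{trivialisotropycorollary}, under the endomorphism hypothesis on $\J$ the map $\beta_{M^F}$ is an isomorphism $\left(\prod_i G_\T(M^{F,i})\right)^\J \times \mathsf{Aut}(\mathsf{Id}_\J)^{M^F} \xrightarrow{\sim} G_{\TJ}(M^F)$ \emph{for arbitrary} $\T$, and since the second factor is trivial we get $G_{\TJ}(M^F) \cong \left(\prod_i G_\T(M^{F,i})\right)^\J = \mathsf{lim}(G_\T \circ F)$. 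Transporting this along the isomorphism $\Z_{\TJ} \cong G_{\TJ}$ of Section \ref{background} and the equivalence $\PTJmod \cong \PTmod^\J$ of Proposition \ref{modelsarefunctors}, one obtains $\Z_{\TJ}(F) \cong \mathsf{lim}(\Z_\T \circ F)$, and the right-hand side is exactly the group of compatible families $(\phi^i)_{i \in \J}$ by Definition \ref{Fcompatible}.

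Finally I would unwind this isomorphism at the level of families of endomorphisms to obtain the explicit formula claimed. Given $\pi \in \Z_{\TJ}(F)$, applying the isomorphism above (equivalently, Corollary \ref{categoricalcharfunctors} with $\psi$ now known to be the identity) produces a uniquely determined compatible family $(\phi^i)_{i \in \J}$ such that $\pi$ is determined by $\psi = \mathsf{id}$ and $(\phi^i)_{i \in \J}$ in the sense of Definition \ref{determinedbyfunctor}; since every sort-component of $\psi_B(k)$ is now $\mathsf{id}_k$, the two cases of Definition \ref{determinedbyfunctor} merge into the single equation $\pi_\mu(k)_B = G(\mathsf{id}_k)_B \circ (\phi^k_{\mu(k)})_B = (\phi^k_{\mu(k)})_B$ for all $B \in \Sigma_{\mathsf{Sort}}$ and all $k$ (the case $k \notin \J^{M^F}_B$ being consistent since $\phi^k_{\mu(k)}$ acts trivially on the at-most-singleton carrier $G(k)_B$), i.e.\ $\pi_\mu(k) = \phi^k_{\mu(k)}$. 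Conversely, any compatible family $(\phi^i)_{i \in \J}$ determines via this formula a family $\pi$ lying in $\Z_{\TJ}(F)$, again by Corollary \ref{categoricalcharfunctors} (or by applying $\beta_{M^F}$). The main obstacle, and the only nonroutine point, is the bookkeeping needed to see that the hypothesis ``$\J$ has only trivial endomorphisms'' forces $\psi$ to be trivial despite $\psi_B$ living on the full subcategory $\J^{M^F}_B$ rather than on $\J$ — but this is immediate from fullness — and to check that Corollary \ref{trivialisotropycorollary} indeed holds for arbitrary $\T$, which it does by its statement.
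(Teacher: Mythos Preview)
Your proposal is correct and follows essentially the same approach as the paper, which simply records this result as ``the categorical version of the previous corollary'' (Corollary~\ref{trivialisotropycorollary}) without giving an explicit proof. Your careful observation that Corollary~\ref{categoricalcharfunctors} cannot be quoted verbatim (because of its extra hypotheses on $\T$) and that one must instead go through the isomorphism $\beta_{M^F}$ supplied by Corollary~\ref{trivialisotropycorollary}, together with your check that the two cases of Definition~\ref{determinedbyfunctor} merge when $\psi$ is trivial, fills in exactly the details the paper leaves implicit.
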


\section{Applications}
\label{applications}

In this final section, we provide some specific applications of the general results proven in the preceding sections; in particular, we will provide an explicit characterization of the covariant isotropy group of $\Group^\J$ for any small category $\J$. 

First, for an \emph{arbitrary} quasi-equational theory $\T$, we deduce from Corollary \ref{trivialisotropycorollary} characterizations of the covariant isotropy group of $\TJ$ for certain common index categories $\J$ with only \emph{trivial} endomorphisms:

\begin{cor}
Let $\T$ be an \textbf{arbitrary} quasi-equational theory.

\begin{itemize}
\item Let $\J$ be any small discrete category. Then for any $M \in \PTJmod$, i.e. any family $\left(M^i\right)_{i \in \J}$ of $\T$-models, we have $\Z_{\TJ}(M) \cong \prod_{i \in \J} \Z_\T\left(M^i\right)$.

\item Let $\J$ be the category with two objects $i, j$ and two parallel morphisms $f, g : i \rightrightarrows j$. Then for any $M \in \PTJmod$, i.e. any parallel pair $f^M, g^M : M^i \rightrightarrows M^j$ in $\Tmod$, we have
\[ \Z_{\TJ}(M) \cong \mathsf{Eq}\left(\Z_\T\left(f^M\right), \Z_\T\left(g^M\right)\right) = \left\{ \gamma^i \in \Z_\T\left(M^i\right) : \Z_\T\left(f^M\right)\left(\gamma^i\right) = \Z_\T\left(g^M\right)\left(\gamma^i\right) \right\}, \] the equalizer of the group homomorphisms $\Z_\T\left(f^M\right), \Z_\T\left(g^M\right) : \Z_\T\left(M^i\right) \rightrightarrows \Z_\T\left(M^j\right)$. 

\item Let $\J$ be the category with three objects $i, j, k$ and two morphisms $f : i \to k$ and $g : j \to k$. Then for any $M \in \PTJmod$, i.e. any cospan $f^M : M^i \to M^k$, $g^M : M^j \to M^k$ in $\Tmod$, we have
\[ \Z_{\TJ}(M) \cong \Z_\T\left(M^i\right) \times_{\Z_\T\left(M^k\right)} \Z_\T\left(M^j\right) = \left\{ \left(\gamma^i, \gamma^j\right) \in \Z_\T\left(M^i\right) \times \Z_\T\left(M^j\right) : \Z_\T\left(f^M\right)\left(\gamma^i\right) = \Z_\T\left(g^M\right)\left(\gamma^j\right)\right\}, \] the pullback of the group homomorphisms $\Z_\T\left(f^M\right) : M^i \to M^k, \Z_\T\left(g^M\right) : M^j \to M^k$. \qed
\end{itemize}
\end{cor}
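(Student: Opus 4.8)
The plan is to apply Corollary \ref{trivialisotropycorollary} in each of the three cases, after first checking that the index category in question has only trivial endomorphisms. Indeed, recall from Section \ref{background} that $\Z_\T \cong G_\T : \PTmod \to \Group$, so that $\mathsf{lim}\left(G_\T \circ F^M\right) \cong \mathsf{lim}\left(\Z_\T \circ F^M\right)$, and that for any $M \in \PTJmod$ the corresponding functor $F^M : \J \to \PTmod$ has $F^M(i) = M^i$ for $i \in \ob\J$ (Proposition \ref{modelsarefunctors}). Since Corollary \ref{trivialisotropycorollary} gives $\Z_{\TJ}(M) \cong G_{\TJ}(M) \cong \mathsf{lim}\left(G_\T \circ F^M\right) \cong \mathsf{lim}\left(\Z_\T \circ F^M\right)$ whenever $\J^M_B$ has only trivial endomorphisms for each $B \in \Sigma_{\mathsf{Sort}}$, it suffices in each case to observe that $\J$ itself has only trivial endomorphisms (whence so does each full subcategory $\J^M_B$) and then to identify the limit of the diagram $\Z_\T \circ F^M : \J \to \Group$ concretely.

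First, for a small discrete category $\J$, the only morphisms are identities, so $\J^M_B$ trivially has only trivial endomorphisms; and the limit of any functor out of a discrete category is the product of its values, giving $\Z_{\TJ}(M) \cong \prod_{i \in \J} \Z_\T\left(M^i\right)$. Second, for the category $\J$ with two objects $i, j$ and parallel morphisms $f, g : i \rightrightarrows j$, the hom-sets $\J(i,i)$ and $\J(j,j)$ each consist only of the relevant identity, so again every endomorphism in $\J$ (and hence in each $\J^M_B$) is trivial; the limit of a parallel-pair diagram is by definition the equalizer, so $\Z_{\TJ}(M) \cong \mathsf{Eq}\left(\Z_\T\left(f^M\right), \Z_\T\left(g^M\right)\right)$ as displayed, where $f^M = F^M(f), g^M = F^M(g)$. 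Third, for the cospan category $\J$ with objects $i, j, k$ and morphisms $f : i \to k$, $g : j \to k$, one checks directly that the only endomorphism of each of $i, j, k$ is the identity, so the hypothesis of Corollary \ref{trivialisotropycorollary} is met; the limit of a cospan diagram is the pullback, yielding $\Z_{\TJ}(M) \cong \Z_\T\left(M^i\right) \times_{\Z_\T\left(M^k\right)} \Z_\T\left(M^j\right)$ with the explicit description given.

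There is no real obstacle here: the content is entirely contained in Corollary \ref{trivialisotropycorollary} together with the isomorphism $\Z_\T \cong G_\T$, and the only thing to verify is the elementary combinatorial fact that discrete categories, the parallel-pair category, and the cospan category have no nontrivial endomorphisms, plus the standard identification of limits over these shapes with products, equalizers, and pullbacks respectively. One should perhaps remark that, since these identifications of limits hold in $\Group$ (where all the relevant small limits exist and are computed as in $\Sets$ with the induced group structure), the displayed set-theoretic descriptions of the equalizer and pullback are literally correct as descriptions of the underlying sets of the isotropy groups.
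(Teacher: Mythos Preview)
Your proposal is correct and follows precisely the paper's own approach: the corollary is stated with a \qed and is introduced in the paper as an immediate consequence of Corollary~\ref{trivialisotropycorollary} for index categories with only trivial endomorphisms, with the three displayed isomorphisms being the standard identifications of $\mathsf{lim}\left(\Z_\T \circ F^M\right)$ as a product, equalizer, and pullback respectively. There is nothing to add.
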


We now provide applications of our general results for quasi-equational theories $\T$ with single-indeterminate isotropy and single-sorted non-total operations, for which we can consider arbitrary small index categories. In fact, for simplicity we will restrict ourselves to \emph{single-sorted} quasi-equational theories $\T$ (which automatically have single-sorted non-total operations) with the property that $\T(M)$ is non-trivial (for the unique sort of $\T$) for every $M \in \Tmod$. A very large number of single-sorted quasi-equational theories $\T$ satisfy this latter property (including the theory of groups); indeed, if $M \in \Tmod$, then $\T(M)$ will be non-trivial as long as the $\T$-model $M\la \x \ra$ obtained by freely adjoining an indeterminate element $\x$ to $M$ has at least two elements, which is very often the case. One salient example of a single-sorted theory that does \emph{not} satisfy this property is the theory of unital rings, because the zero ring has no outgoing ring homomorphism to any non-zero unital ring and hence has trivial diagram theory. But this situation seems quite uncommon in practice (and of course, we can always restrict attention to functors $F : \J \to \Tmod$ for which each $F(i)$ has non-trivial diagram theory).    

So let $\T$ be a single-sorted quasi-equational theory with single-indeterminate isotropy such that every $M \in \Tmod$ has non-trivial diagram theory, and let $\J$ be any small index category. For any $M \in \TJmod$ we then clearly have $\mathsf{Aut}(\mathsf{Id}_\J)^M = \mathsf{Aut}(\mathsf{Id}_\J)$, and hence we obtain the following simplification of earlier results:

\begin{cor}
\label{singlesortedcorollary}
Let $\T$ be any single-sorted quasi-equational theory with single-indeterminate isotropy such that every $M \in \Tmod$ has non-trivial diagram theory, and let $\J$ be any small index category. 
\begin{itemize}
\item For any $M \in \PTJmod$ we have $\Z_{\TJ}(M) \cong \left(\prod_i \Z_\T\left(M^i\right)\right)^\J \times \mathsf{Aut}(\mathsf{Id}_\J)$, naturally in $M$.

\item Let $F : \J \to \PTmod$ and let $\pi = \left(\pi_\mu : \mathsf{cod}(\mu) \to \mathsf{cod}(\mu)\right)_{\mu \in \mathsf{Dom}(F)}$ be any $\mathsf{Dom}(F)$-indexed family of endomorphisms in $\PTmod^\J$. Then $\pi \in \Z_{\TJ}(F)$ iff there is a (uniquely determined) compatible family $\left(\phi^i\right)_{i \in \J} \in \prod_{i \in \J} \Z_\T(F(i))$ and a (uniquely determined) element $\psi \in \mathsf{Aut}(\mathsf{Id}_\J)$ such that $\pi$ is determined by $\left(\phi^i\right)_{i \in \J}$ and $\psi$, in the sense that
\[ \pi_\mu(k) = G(\psi(k)) \circ \phi^k_{\mu(k)} : G(k) \xrightarrow{\sim} G(k) \] for every morphism $\mu : F \to G$ in $\PTmod^\J$ and every $k \in \ob\J$. \qed
\end{itemize}
\end{cor}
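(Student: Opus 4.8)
The plan is to derive both parts of the corollary directly from Theorem~\ref{functortheoriestheorem} and Corollary~\ref{categoricalcharfunctors}, after checking that their two standing hypotheses are satisfied and that the group $\mathsf{Aut}(\mathsf{Id}_\J)^M$ and the ``determined by'' relation degenerate in the single-sorted setting. Single-indeterminate isotropy is assumed. The condition of single-sorted non-total operations (Definition~\ref{singlesortednontotaloperations}) holds \emph{vacuously}: since $\T$ is single-sorted, for any function symbol $g : A_1 \times \ldots \times A_n \to A$ of its signature the sorts $A_1, \ldots, A_n, A$ all coincide with the unique sort $X$ of $\T$, so the implication in Definition~\ref{singlesortednontotaloperations} is automatic. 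Hence both Theorem~\ref{functortheoriestheorem} and Corollary~\ref{categoricalcharfunctors} apply to $\T$ and $\J$.

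The key simplification I would establish is that $\mathsf{Aut}(\mathsf{Id}_\J)^M = \mathsf{Aut}(\mathsf{Id}_\J)$ for every $M \in \PTJmod$. Since $X$ is the only sort, the product $\prod_{B \in \Sigma_{\mathsf{Sort}}} \mathsf{Aut}\left(\mathsf{Id}_{\J^M_B}\right)$ of Definition~\ref{automorphismsubgroup} consists of the single factor $\mathsf{Aut}\left(\mathsf{Id}_{\J^M_X}\right)$, and the compatibility condition cutting out the subgroup $\mathsf{Aut}(\mathsf{Id}_\J)^M$ is trivially met there, because $A_m = A = X$ forces $\psi_{A_m}(i) = \psi_A(i)$; thus $\mathsf{Aut}(\mathsf{Id}_\J)^M = \mathsf{Aut}\left(\mathsf{Id}_{\J^M_X}\right)$. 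Moreover, the hypothesis that every $M^i \in \Tmod$ has non-trivial diagram theory means exactly that $\T\left(M^i\right)$ is non-trivial for the sort $X$ for every $i \in \ob\J$, i.e. $\J^M_X = \J$, whence $\mathsf{Aut}(\mathsf{Id}_\J)^M = \mathsf{Aut}(\mathsf{Id}_\J)$.

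With these preliminaries, the first bullet follows from Theorem~\ref{functortheoriestheorem}, which furnishes a natural isomorphism $G_{\TJ}^* \xrightarrow{\sim} G_{\TJ}$ with $G_{\TJ}^*(M) = \left(\prod_i G_\T\left(M^i\right)\right)^\J \times \mathsf{Aut}(\mathsf{Id}_\J)^M$: combining it with the natural isomorphisms $G_\T \cong \Z_\T$ and $G_{\TJ} \cong \Z_{\TJ}$ from Section~\ref{background}, with the functoriality of $(-)^\J = \mathsf{lim}$ (Lemma~\ref{grouptheorylemma}), which upgrades $G_\T \cong \Z_\T$ to $\left(\prod_i G_\T\left(M^i\right)\right)^\J \cong \left(\prod_i \Z_\T\left(M^i\right)\right)^\J$, and with $\mathsf{Aut}(\mathsf{Id}_\J)^M = \mathsf{Aut}(\mathsf{Id}_\J)$, we obtain $\Z_{\TJ}(M) \cong \left(\prod_i \Z_\T\left(M^i\right)\right)^\J \times \mathsf{Aut}(\mathsf{Id}_\J)$ naturally in $M$. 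The second bullet follows from Corollary~\ref{categoricalcharfunctors} applied to $F$: once more $\mathsf{Aut}(\mathsf{Id}_\J)^{M^F} = \mathsf{Aut}(\mathsf{Id}_\J)$ since each $(M^F)^i = F(i)$ has non-trivial diagram theory, and the ``determined by'' relation of Definition~\ref{determinedbyfunctor} loses its first clause (the case $k \notin \J^{M^F}_X$ never occurs) and reduces, after suppressing the redundant sort subscript $X$ and identifying $\psi \in \mathsf{Aut}(\mathsf{Id}_\J)$ with its unique component, to the stated formula $\pi_\mu(k) = G(\psi(k)) \circ \phi^k_{\mu(k)}$. I do not anticipate a genuine obstacle here: all the substantive work is already packaged in Theorem~\ref{functortheoriestheorem} and Corollary~\ref{categoricalcharfunctors}, and what remains is only the bookkeeping that $\mathsf{Aut}(\mathsf{Id}_\J)^M$ collapses under the two standing assumptions and that the ``determined by'' relation sheds its triviality clause; the sole point deserving a moment's care is to match the corollary's phrase ``non-trivial diagram theory'' with non-triviality of $\T\left(M^i\right)$ for the sort $X$, which coincide by definition.
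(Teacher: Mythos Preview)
Your proposal is correct and follows essentially the same approach as the paper. The paper treats this corollary as an immediate simplification, noting in the preceding paragraph that single-sorted theories automatically have single-sorted non-total operations and that under the non-triviality hypothesis one ``clearly'' has $\mathsf{Aut}(\mathsf{Id}_\J)^M = \mathsf{Aut}(\mathsf{Id}_\J)$; your write-up just makes these observations explicit and then invokes Theorem~\ref{functortheoriestheorem} and Corollary~\ref{categoricalcharfunctors} exactly as intended.
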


We note that Corollary \ref{singlesortedcorollary} specializes to the characterization of covariant isotropy for presheaf categories $\Sets^\J$ obtained in \cite[Corollary 22]{FSCD}. Namely, if $\T$ is the single-sorted theory with no operations and no axioms, so that $\Tmod = \Sets$, then since $\T$ has trivial and hence single-indeterminate isotropy (see \cite[Corollary 3.1.6]{thesis}) and clearly satisfies the other assumptions of Corollary \ref{singlesortedcorollary}, it follows that for any presheaf $F : \J \to \Sets$ we have $\Z_{\Sets^\J}(F) \cong \Aut(\Id_\J)$, since $\Z_\Sets(F(i))$ is trivial for each $i \in \ob\J$. This is precisely what is stated in \cite[Corollary 22]{FSCD}.  

As promised, we now use Corollary \ref{singlesortedcorollary} to provide an explicit characterization of the covariant isotropy group of $\Group^\J$ for any small category $\J$. The quasi-equational theory $\T$ of groups is single-sorted, has single-indeterminate isotropy (see \cite[Example 4.1]{MFPS} or \cite[Proposition 3.2.7]{thesis}), and the diagram theory of any group is non-trivial (if $G$ is any group, then the group $G\la \x \ra$ is non-trivial). Now let $F : \J \to \Group$ be any functor. Then for any $i \in \ob\J$ and any $\Dom(F(i))$-indexed family of group endomorphisms $\phi^i = \left(\phi^i_f : \cod(f) \to \cod(f)\right)_{f \in \Dom(F(i))}$, we have $\phi^i \in \Z_\Group(F(i))$ iff there is a unique element $g_i \in F(i)$ such that for every outgoing group homomorphism $f : F(i) \to G$, the endomorphism $\phi^i_f : G \to G$ is the inner automorphism $\inn_{f(g_i)}$ induced by the element $f(g_i) \in G$ (see \cite[Theorem 1]{Bergman} or \cite[Corollary 3.2.8]{thesis}). We then have:

\begin{claim}
\label{compatibleclaim}
A family $\left(\phi^i\right)_{i \in \J} \in \prod_{i \in \J} \Z_\Group(F(i))$ is compatible in the sense of Definition \ref{Fcompatible} iff $F(h)(g_i) = g_j$ for any morphism $h : i \to j$ in $\J$, i.e. iff $(g_i)_{i \in \J} \in \limit \ F$. 
\end{claim}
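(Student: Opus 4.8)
The plan is to unwind both sides of the claimed equivalence in terms of the group elements $g_i \in F(i)$ that represent the $\phi^i$, and then observe that the compatibility condition of Definition~\ref{Fcompatible} becomes precisely the statement that $F(h)(g_i) = g_j$. Recall that for each $i \in \ob\J$, since $\phi^i \in \Z_\Group(F(i))$, there is a unique $g_i \in F(i)$ with $\phi^i_f = \inn_{f(g_i)}$ for every group homomorphism $f : F(i) \to G$ out of $F(i)$; in particular $\phi^i_{\id_{F(i)}} = \inn_{g_i}$. Conversely, the element $g_i$ is recovered from $\phi^i$ as $g_i = \phi^i_{\id_{F(i)}}(?)$—more precisely, $g_i$ is the unique element of $F(i)$ inducing the inner automorphism $\phi^i_{\id_{F(i)}}$, which is well-defined because $F(i)$ is a group (so the conjugation action has trivial... — actually the representing element need not be unique in a general group, but Bergman's theorem gives uniqueness of the \emph{extended} inner automorphism, and the displayed uniqueness clause ``there is a unique element $g_i$'' is exactly what the cited results provide). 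I will take this bijective correspondence $\phi^i \leftrightarrow g_i$ as given.

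First I would fix a morphism $h : i \to j$ in $\J$ and the composite $F(h) : F(i) \to F(j)$, which is an object of $\Dom(F(i))$. Compatibility of $(\phi^i)_{i \in \J}$ says that for every $g \in \Dom(F(j))$ we have $\phi^j_g = \phi^i_{g \circ F(h)}$. Specializing to $g = \id_{F(j)}$ gives $\phi^j_{\id_{F(j)}} = \phi^i_{F(h)}$. The left-hand side is $\inn_{g_j}$. For the right-hand side, since $F(h) : F(i) \to F(j)$ is a homomorphism out of $F(i)$, the defining property of $g_i$ gives $\phi^i_{F(h)} = \inn_{F(h)(g_i)}$. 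Hence compatibility forces $\inn_{g_j} = \inn_{F(h)(g_i)}$ in the group $F(j)$, and by the uniqueness clause (the representing element of an extended inner automorphism of $F(j)$ is unique, applied to $F(h)(g_i) \in F(j) = \cod(\id_{F(j)})$ and $g_j$) we conclude $F(h)(g_i) = g_j$. Ranging over all $h : i \to j$ shows that compatibility implies $(g_i)_{i \in \J} \in \limit F$.

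For the converse, suppose $F(h)(g_i) = g_j$ for every morphism $h : i \to j$ in $\J$. I must show $\phi^j_g = \phi^i_{g \circ F(h)}$ for every $g \in \Dom(F(j))$, say $g : F(j) \to G$. By the defining property of $g_j$, we have $\phi^j_g = \inn_{g(g_j)}$. On the other hand $g \circ F(h) : F(i) \to G$ is a homomorphism out of $F(i)$, so $\phi^i_{g \circ F(h)} = \inn_{(g \circ F(h))(g_i)} = \inn_{g(F(h)(g_i))} = \inn_{g(g_j)}$, using the hypothesis $F(h)(g_i) = g_j$. Thus $\phi^j_g = \phi^i_{g \circ F(h)}$, establishing compatibility. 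The final phrase ``i.e.\ iff $(g_i)_{i \in \J} \in \limit F$'' is then just the reformulation of the cone condition $F(h)(g_i) = g_j$ (for all $h : i \to j$) as membership in the limit of $F$ computed in $\Group$ (equivalently in $\Sets$), exactly as in Lemma~\ref{grouptheorylemma}.

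The only genuinely delicate point—and the step I expect to require care in the write-up—is the appeal to \emph{uniqueness} of the element representing an extended inner automorphism of a group, which is what lets me pass from $\inn_{g_j} = \inn_{F(h)(g_i)}$ to $g_j = F(h)(g_i)$. This is not uniqueness of a conjugating element up to centralizer (which would fail), but the stronger uniqueness built into Bergman's characterization: the map sending $g \in G$ to the extended inner automorphism $(\inn_{f(g)})_f \in \Z_\Group(G)$ is injective, which is part of the content of \cite[Theorem 1]{Bergman} / \cite[Corollary 3.2.8]{thesis}. Once that is invoked cleanly, the rest is a routine chase through the definitions.
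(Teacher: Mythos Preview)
Your converse direction is correct and identical to the paper's. Your forward direction, however, has a gap at exactly the point you flag as delicate. After specializing compatibility to $g = \id_{F(j)}$ you obtain $\inn_{g_j} = \inn_{F(h)(g_i)}$ as automorphisms of $F(j)$; but this only says $g_j^{-1} F(h)(g_i)$ lies in the centre of $F(j)$, not that $g_j = F(h)(g_i)$. The uniqueness you want to invoke---injectivity of $G \to \Z_\Group(G)$, $a \mapsto (\inn_{f(a)})_f$---is a statement about \emph{extended} inner automorphisms, i.e.\ about the whole family indexed by $\Dom(F(j))$, and you have only checked equality of the single component at $\id_{F(j)}$.

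The fix is easy and comes in two flavours. One is not to specialize: compatibility gives $\phi^j_g = \phi^i_{g \circ F(h)}$, i.e.\ $\inn_{g(g_j)} = \inn_{g(F(h)(g_i))}$, for \emph{every} $g \in \Dom(F(j))$, which is exactly the statement that $g_j$ and $F(h)(g_i)$ determine the same extended inner automorphism of $F(j)$, whence $g_j = F(h)(g_i)$ by Bergman's isomorphism. The other is the paper's route: specialize instead to $g = \eta : F(j) \hookrightarrow F(j)\langle \x \rangle$, so that $\inn_{\eta(g_j)}(\x) = \inn_{\eta(F(h)(g_i))}(\x)$ reads $g_j \x g_j^{-1} = F(h)(g_i)\, \x\, F(h)(g_i)^{-1}$ in $F(j)\langle \x \rangle$, and the free indeterminate $\x$ forces $g_j = F(h)(g_i)$ directly. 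Either repair completes your argument; the paper uses the second.
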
 

\begin{proof}
Suppose first that $\left(\phi^i\right)_{i \in \J} \in \prod_{i \in \J} \Z_\Group(F(i))$ is compatible, let $h : i \to j$ be a morphism of $\J$, and let us show $F(h)(g_i) = g_j$. We have the inclusion group homomorphism $\eta : F(j) \to F(j)\la \x \ra$, so by compatibility it follows that $\phi^j_{\eta} = \phi^i_{\eta \circ F(h)} : F(j)\la \x \ra \xrightarrow{\sim} F(j)\la \x \ra$, and hence $\inn_{\eta(g_j)} = \inn_{\eta(F(h)(g_i))} : F(j)\la \x \ra \xrightarrow{\sim} F(j)\la \x \ra$. We then obtain
\[ g_j \x g_j^{-1} = \inn_{\eta(g_j)}(\x) = \inn_{\eta(F(h)(g_i))}(\x) = F(h)(g_i) \x F(h)(g_i)^{-1} \] in $F(j)\la \x \ra$, which forces $F(h)(g_i) = g_j$, as desired. Conversely, if $(g_i)_{i \in \J} \in \limit \ F$ and $h : i \to j$ is a morphism of $\J$, then for any group homomorphism $f : F(j) \to G$ we have $\phi^j_f = \inn_{f(g_j)} = \inn_{(f \circ F(h))(g_i)} = \phi^i_{f \circ F(h)}$, as desired.   
\end{proof}

\noindent We now have the following application of Corollary \ref{singlesortedcorollary}:

\begin{cor}
\label{grouptheorycor}
Let $\J$ be any small index category. 
\begin{itemize}
\item For any $F : \J \to \Group$ we have $\Z_{\Group^\J}(F) \cong \limit \ F \times \mathsf{Aut}(\mathsf{Id}_\J)$, naturally in $F$.

\item Let $F : \J \to \Group$ and let $\pi = \left(\pi_\mu : \mathsf{cod}(\mu) \to \mathsf{cod}(\mu)\right)_{\mu \in \mathsf{Dom}(F)}$ be any $\mathsf{Dom}(F)$-indexed family of endomorphisms in $\Group^\J$. Then $\pi \in \Z_{\Group^\J}(F)$ iff there is a uniquely determined family $(g_i)_{i \in \J} \in \limit \ F$ and a uniquely determined element $\psi \in \mathsf{Aut}(\mathsf{Id}_\J)$ such that $\pi$ is determined by $\psi$ and $(g_i)_{i \in \J}$ in the sense that
\[ \pi_\mu(k) = G(\psi(k)) \circ \inn_{\mu_k(g_k)} : G(k) \xrightarrow{\sim} G(k) \] for every morphism $\mu : F \to G$ in $\Group^\J$ and every $k \in \ob\J$.

\item Thus, an automorphism $\pi : F \xrightarrow{\sim} F$ of $F : \J \to \Group$ is \textbf{inner} iff there are $(g_i)_{i \in \J} \in \limit \ F$ and $\psi \in \Aut(\Id_\J)$ such that $\pi(k) = F(\psi(k)) \circ \inn_{g_k} : F(k) \xrightarrow{\sim} F(k)$ for each $k \in \ob\J$.
\end{itemize}
\end{cor}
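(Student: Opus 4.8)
The plan is to assemble the three assertions from Corollary~\ref{singlesortedcorollary}, Claim~\ref{compatibleclaim}, and Bergman's characterization of the covariant isotropy group of $\Group$ (\cite[Theorem 1]{Bergman}, quoted just above), using the fact, recorded in the paragraph preceding the statement, that the theory $\T$ of groups is single-sorted, has single-indeterminate isotropy, and is such that every group has non-trivial diagram theory; hence Corollary~\ref{singlesortedcorollary} applies to it.

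First I would prove the first bullet. By the first bullet of Corollary~\ref{singlesortedcorollary} applied to $\T = \Group$, we have $\Z_{\Group^\J}(F) \cong \left(\prod_i \Z_\Group(F(i))\right)^\J \times \Aut(\Id_\J)$, naturally in $F$. By Lemma~\ref{grouptheorylemma} (equivalently, Definition~\ref{Fcompatible}), the first factor is exactly $\limit(\Z_\Group \circ F)$, the group of compatible families $(\phi^i)_{i \in \J}$. For each $i$, Bergman's theorem supplies a group isomorphism $\Z_\Group(F(i)) \xrightarrow{\sim} F(i)$ sending $\phi^i$ to the unique $g_i \in F(i)$ with $\phi^i_f = \inn_{f(g_i)}$ for every $f \in \Dom(F(i))$; componentwise these assemble into a group isomorphism $\prod_i \Z_\Group(F(i)) \xrightarrow{\sim} \prod_i F(i)$, and by Claim~\ref{compatibleclaim} it restricts to a bijection between compatible families and $\limit \ F$. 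One checks this restricted isomorphism $\limit(\Z_\Group \circ F) \cong \limit \ F$ is natural in $F$ (this is the naturality already implicit in Bergman's characterization). Combining, $\Z_{\Group^\J}(F) \cong \limit \ F \times \Aut(\Id_\J)$, naturally in $F$.

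Next I would deduce the second bullet from the second bullet of Corollary~\ref{singlesortedcorollary}: $\pi \in \Z_{\Group^\J}(F)$ iff there is a uniquely determined compatible family $(\phi^i)_{i \in \J}$ and a uniquely determined $\psi \in \Aut(\Id_\J)$ with $\pi_\mu(k) = G(\psi(k)) \circ \phi^k_{\mu(k)}$ for every $\mu : F \to G$ in $\Group^\J$ and every $k \in \ob\J$. By Claim~\ref{compatibleclaim} the compatible family corresponds bijectively to $(g_i)_{i \in \J} \in \limit \ F$, and by Bergman's theorem $\phi^k_{\mu(k)} = \inn_{\mu(k)(g_k)} = \inn_{\mu_k(g_k)}$, where $\mu(k) = \mu_k : F(k) \to G(k)$ is the $k$-component of $\mu$. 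Substituting gives $\pi_\mu(k) = G(\psi(k)) \circ \inn_{\mu_k(g_k)}$ with $(g_i)_{i \in \J}$ and $\psi$ uniquely determined, which is the claim. Finally, the third bullet is the specialization to inner automorphisms: by definition $\pi : F \xrightarrow{\sim} F$ is inner iff $\pi = \pi'_{\id_F}$ for some $\pi' \in \Z_{\Group^\J}(F)$, so applying the second bullet with $\mu = \id_F$ (hence $G = F$ and $\mu_k = \id_{F(k)}$, so $\mu_k(g_k) = g_k$) yields $\pi(k) = F(\psi(k)) \circ \inn_{g_k}$, and conversely any such family is the identity-component of the extended inner automorphism furnished by the second bullet. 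I do not expect a serious obstacle: the substantive work is done in Corollary~\ref{singlesortedcorollary}, Claim~\ref{compatibleclaim}, and Bergman's theorem, and the only point needing a little care is the naturality of $\limit(\Z_\Group \circ F) \cong \limit \ F$ in the first bullet, which reduces to naturality already present in Bergman's characterization.
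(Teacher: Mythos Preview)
Your proposal is correct and follows essentially the same route as the paper: both derive the three bullets from Corollary~\ref{singlesortedcorollary}, Claim~\ref{compatibleclaim}, and Bergman's characterization, with the third bullet obtained by specializing the second to $\mu = \id_F$. The only cosmetic difference is that the paper invokes the functorial form $\Z_\Group \cong \Id$ (\cite[Theorem 2]{Bergman}) directly to get $\limit(\Z_\Group \circ F) \cong \limit\,F$ in one step, whereas you unpack this via componentwise isomorphisms plus Claim~\ref{compatibleclaim}; both are fine.
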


\begin{proof}
The first claim follows from the first claim of Corollary \ref{singlesortedcorollary} because $\left(\prod_i \Z_\T\left(F(i)\right)\right)^\J = \limit\left(\Z_\Group \circ F\right) \cong \limit \ F$, since $\Z_\Group \cong \Id$ by \cite[Theorem 2]{Bergman}. The second claim follows from the second claim of Corollary \ref{singlesortedcorollary} by the preceding observations (including Claim \ref{compatibleclaim}), and the last claim follows from the second and the definition of inner automorphisms in terms of $\Z_{\Group^\J}(F)$ (see the Introduction).  
\end{proof}

Thus, while Bergman's \cite[Theorem 1]{Bergman} shows that the categorical inner automorphisms in $\Group$ are precisely the conjugation-theoretic inner automorphisms, our Corollary \ref{grouptheorycor} shows that the categorical inner automorphisms in any category $\Group^\J$ of presheaves of groups can be characterized in terms of the conjugation-theoretic inner automorphisms of the component groups, together with natural automorphisms of $\Id_\J$. 

\section*{Acknowledgements}

\noindent The research in this paper was completed as part of the author's PhD thesis \cite{thesis}, and the author therefore wishes to acknowledge helpful discussions with his PhD supervisors Pieter Hofstra and Philip Scott concerning the material herein.

\section*{Competing Interests Statement}

\noindent The author has no competing interests to declare.

\section*{References}

\end{document}